\theoremstyle{thmstyleone}%
\newtheorem{theorem}{Theorem}%  meant for continuous numbers
\theoremstyle{thmstyletwo}%
\newtheorem{remark}{Remark}%
\theoremstyle{thmstylethree}%
\newtheorem{definition}{Definition}%
\newcommand*\widefbox[1]{\fbox{\hspace{.5em}#1\hspace{.5em}}}
\newcommand{\executeiffilenewer}[3]{%
 \ifnum\pdfstrcmp{\pdffilemoddate{#1}}%
 {\pdffilemoddate{#2}}>0%
 {\immediate\write18{#3}}\fi%
}
\newcommand{%
% \executeiffilenewer{#1.svg}{#1.pdf}%
% {inkscape -z -D --file=#1.svg %
% --export-pdf=#1.pdf --export-latex}%
 \input{.pdf_tex}%
}[1]{%
% \executeiffilenewer{#1.svg}{#1.pdf}%
% {inkscape -z -D --file=#1.svg %
% --export-pdf=#1.pdf --export-latex}%
 \input{#1.pdf_tex}%
}
\newtheorem{lemma}[theorem]{Lemma}
\newtheorem{corollary}[theorem]{Corollary}
\pgfplotsset{every tick label/.append style={font=\footnotesize}}
\newcommand{\argmin}{\mathop{\mathrm{argmin}}}
\newcommand{\TT}{\ensuremath{\mathsf{\tiny{T}}}}
\newcommand{\T}{^{\TT}}
\newcommand{\diff}[1][]{\mathrm{d}#1}
\newcommand{\dt}{\diff t }
\newcommand{\hchange}[1]{#1}
\begin{document}

\newcommand{\ArxivToggle}[1]{}

\title[Accelerated First-Order Optimization under Nonlinear Constraints]{Accelerated First-Order Optimization under Nonlinear Constraints}

%%=============================================================%%
%% Prefix	-> \pfx{Dr}
%% GivenName	-> \fnm{Joergen W.}
%% Particle	-> \spfx{van der} -> surname prefix
%% FamilyName	-> \sur{Ploeg}
%% Suffix	-> \sfx{IV}
%% NatureName	-> \tanm{Poet Laureate} -> Title after name
%% Degrees	-> \dgr{MSc, PhD}
%% \author*[1,2]{\pfx{Dr} \fnm{Joergen W.} \spfx{van der} \sur{Ploeg} \sfx{IV} \tanm{Poet Laureate} 
%%                 \dgr{MSc, PhD}}\email{iauthor@gmail.com}
%%=============================================================%%

\author*[1]{\fnm{Michael} \sur{Muehlebach}}\email{michaelm@tuebingen.mpg.de}

\author[2]{\fnm{Michael I.} \sur{Jordan}}\email{jordan@berkeley.edu}

\affil*[1]{\orgdiv{Learning and Dynamical Systems}, \orgname{Max Planck Institute for Intelligent Systems}, \orgaddress{\street{Max-Planck-Ring 4}, \city{Tuebingen}, \postcode{72076}, \state{Baden-Wuerttemberg}, \country{Germany}}}

\affil[2]{\orgdiv{Department of Electrical Engineering and Computer Science}, \orgname{University of California, Berkeley}, \orgaddress{\street{387 Soda Hall}, \city{Berkeley}, \postcode{94720}, \state{California}, \country{USA}}}

%%==================================%%
%% sample for unstructured abstract %%
%%==================================%%

\abstract{We exploit analogies between first-order algorithms for constrained optimization and non-smooth dynamical systems to design a new class of accelerated first-order algorithms for constrained optimization. Unlike Frank-Wolfe or projected gradients, these algorithms avoid optimization over the entire feasible set at each iteration. We prove convergence to stationary points even in a nonconvex setting and we derive accelerated rates for the convex setting both in continuous time, as well as in discrete time. An important property of these algorithms is that constraints are expressed in terms of velocities instead of positions, which naturally leads to sparse, local and convex approximations of the feasible set (even if the feasible set is nonconvex). Thus, the complexity tends to grow mildly in the number of decision variables and in the number of constraints, which makes the algorithms suitable for machine learning applications. We apply our algorithms to a compressed sensing and a sparse regression problem, showing that we can treat nonconvex $\ell^p$ constraints ($p<1$) efficiently, while recovering state-of-the-art performance for $p=1$.}

\keywords{Constrained Optimization, Nonlinear Programming, Gradient-based Methods, Machine Learning}

%%\pacs[JEL Classification]{D8, H51}

%%\pacs[MSC Classification]{35A01, 65L10, 65L12, 65L20, 65L70}

\maketitle

\section{Introduction}
Our work is concerned with developing first-order algorithms for nonlinear constrained optimization problems of the following form:
\begin{equation}
\min_{x\in \mathbb{R}^n} f(x), \quad \text{s.t.} \quad g(x)\geq 0, \label{eq:fundProb}
\end{equation}
where the function $f:\mathbb{R}^n \rightarrow \mathbb{R}$ defines the objective, the function $g:\mathbb{R}^n \rightarrow \mathbb{R}^{n_\text{g}}$ the constraints, and where $n$ and $n_\text{g}$ are positive integers. We assume that $f$ and $g$ are continuous, $f$ is coercive, and that the feasible set \hchange{$\{x\in \mathbb{R}^n~|~g(x)\geq 0\}$} is compact. The applications we have in mind include problems in statistics, machine learning, and control theory, where $n$ and $n_\text{g}$ are typically on the order of $10^6$.

We develop a new class of accelerated first-order algorithms for computing stationary points of \eqref{eq:fundProb}. These algorithms have four distinctive features: (i) they rely on local linear approximations of the feasible set, thereby avoiding projections or optimizations over the entire feasible set at every iteration, (ii) they are conceptually easy to understand and easy to implement, (iii) their iteration complexity for convex problems is dimension-independent,\footnote{This contrasts with interior-point methods, for example, which require $\mathcal{O}(\sqrt{n_\text{g}})$ Newton iterations to decrease the value of the objective by a constant factor.} and
 (iv) in many important cases (even nonconvex ones) their per-iteration complexity scales roughly linearly in the number of decision variables and the number of constraints.

An important aspect of our work is to lift position constraints to a velocity level, which naturally results in a local linear approximation of the feasible set. These approximations come in two variants, each providing a different trade-off between the per-iteration complexity and the resulting convergence guarantees. More precisely, in our algorithms the forward increments, $(x_{k+1}-x_k)/T$, where $T>0$ is the step size, will be constrained to the set
\begin{equation}
V_{\alpha}(x):=\{v\in \mathbb{R}^n~|~\nabla g_i(x)\T v + \alpha g_i(x) \geq 0, ~\forall i\in I \}, \label{eq:defVa}
\end{equation}
where $I$ either has the form $I=[n_\text{g} ]:=\{1,2,\dots,n_\text{g}\}$ or the form \hchange{$I_x:=\{~i\in [n_\text{g}]~|~g_i(x)\leq 0\}$}. The former version includes every constraint at every iteration, while the latter version includes only constraints that are violated at the current iterate $x$. \hchange{The analysis of algorithms is significantly more challenging when $I=I_x$ compared to $I=[n_\text{g}]$. In the following we will consider both variants, however, non-asymptotic linear rates in discrete time will only be derived for $I=[n_\text{g}]$. We conjecture that the same rates can be achieved asymptotically for $I=I_x$, which has been shown in the non-accelerated situation in earlier work, see \cite{ownWorkC}.} \hchange{Our convergence results span both continuous-time and discrete-time models and are summarized in Table~\ref{Tab:summary} (see the corresponding theorems for the precise statements).}

\begin{table}\label{Tab:summary}
\begin{tabular}{lllllll}
        \toprule
setting & rate & version & objective & constraints & $L_l$ & result \\\hline\\[-5pt]
        \multirow{4}{*}{\shortstack{discrete\\ time}} & $\mathcal{O}(\sqrt{\kappa_l} \log(1/\varepsilon))$ & $I=[n_\text{g}]$ & smt/str cvx & smt cve & kwn & Thm.~\ref{Thm:ADT}\\[5pt]
& $\tilde{\mathcal{O}}(1/\sqrt{\varepsilon})$ & $I=[n_\text{g}]$ & smt/str cvx & smt cve & - & Cor.~\ref{Cor:SAR}\\[5pt]
& $\tilde{\mathcal{O}}(1/\sqrt{\varepsilon})$ & $I=[n_\text{g}]$ & smt cvx & smt cve & kwn & Cor.~\ref{Cor:SCA}\\[5pt]
& conv. to stat. & $I=I_x$ & smt & smt & - & Thm.~\ref{Thm:ConvDT}\\[5pt]\hline\\[-5pt]
 \multirow{3}{*}{\shortstack{continuous\\ time}} & $\mathcal{O}(\sqrt{\kappa}\log(1/\varepsilon))$ & $I=I_x$ & smt/str cvx & smt cve & - & Thm.~\ref{Thm:ConvRate}\\[5pt]
& $\mathcal{O}(1/\sqrt{\varepsilon})$ & $I=I_x$ & smt cvx & smt cve & - & Thm.~\ref{Thm:ConvRate}\\[5pt]
& conv. to stat. & $I=I_x$ & smt & smt & - & Thm.~\ref{Thm:stability}
\end{tabular}
\caption{\hchange{The table summarizes the convergence results presented in the main text, where $\mu$ and $\kappa$ denote the strong convexity constant and condition number of $f$, $L_l$ is the smoothness constant of the Lagrangian $f(x)-{\lambda^*}\T g(x)$ with $\lambda^*$ an optimal multiplier of \eqref{eq:fundProb}, and $\kappa_l=L_l/\mu$. Moreover, smt stands for smooth, str for strongly, cvx for convex, cve for concave, and conv. to stat. for convergence to stationary points. The column under $L_l$ describes whether a bound on the smoothness constant $L_l$ needs to be known (knw) and the tolerance is denoted by $\varepsilon$. In the special case where $g$ is linear, $\kappa_l$ reduces to $\kappa$, and our algorithm recovers the rate of accelerated projected gradient descent, without requiring projections or optimizations over the entire feasible set.}}
\end{table}

%in Tab.~\ref{Tab:results}. The table highlights that  our discrete-time version with $I=[n_\text{g}]$ converges at an accelerated rate.

\hchange{Our treatment builds on recent progress in using tools from continuous-time dynamical systems to analyze discrete-time algorithms in gradient-based optimization~\citep{SuAcc, WibisonoVariational, Diakonikolas2, KricheneAcc, Gui, Betancourt, ourWork, ourWork3, ourWork2, Attouch, Attouch1, Attouch2,Dilsad,Guanchun}. Much of this work aims at understanding accelerated first-order optimization methods, such as Nesterov's algorithm, by exposing links between differential and symplectic geometry, dynamical systems, and mechanics.} While in the absence of constraints these analogies result in \emph{smooth} dynamical systems, the current article presents analogies between constrained optimization and \emph{non-smooth} dynamical systems. Indeed, one of the closest point of contacts with existing literature is the notion of Moreau time-stepping in non-smooth mechanics \citep{moreau}. The important feature of Moreau time-stepping, which also lies at the heart of our work, is that smooth and non-smooth motion are treated on equal footing, which is achieved by discretizing a certain kind of differential inclusion~\citep[see, e.g.,][]{Glocker,Studer}.

Our approach can also be interpreted through the lens of the projected gradient methodology and indeed it has certain similarities to inexact projected gradient methods, as proposed by \citet{InexactPG} and \citet{InexactPG2}. While projected gradient approaches have been successfully applied in various machine learning problems~\citep[see, e.g.,][]{SignalProcessing,SVMwPG}, the Frank-Wolfe algorithm has also received considerable attention in recent years~\citep{Jaeggi}. The appeal of Frank-Wolfe is further increased by the fact that it provides a unified framework for many first-order machine learning algorithms in constrained settings, including support vector machines, online estimation of mixtures of probability densities, and boosting \citep{Clarkson}. Recent results extend the Frank-Wolfe algorithm to the stochastic setting \citep{PfOnlineLearning,OneSampleStochasticFW}, or improve on its relatively slow convergence rate \citep{FasterRates,BoostingFrankWolfe}. 

In some cases constraints can be handled very efficiently with mirror descent, \citep[Ch.~3]{ProblemComplexity}, where a non-Euclidean metric is introduced that adapts gradient descent to the specific type of objective function or the specific type of constraints at hand \citep{BeckMirror}. Although mirror descent is based on projections onto the feasible set, the non-Euclidean metric can improve on problem-specific constants. An important example is the optimization of linear functions over the unit simplex, which has applications in online machine learning \citep{BubeckBandits}. \hchange{Another important class of methods arises from the alternating direction of multipliers, which can also be formulated with inertial dynamics, as recently shown by \citet{Attouch3} and \citet{Attouch4}. In fact, \cite{Attouch3, Attouch4} proposes analogies to smooth dynamical systems for understanding the convergence properties and asymptotic behavior of algorithms. Our work follows a similar guiding principle, focusing on non-smooth dynamics that arise from set-valued operators.}

Compared to projected gradients, mirror descent, and the Frank-Wolfe algorithm, our approach avoids optimizing over the entire feasible set at each iteration and instead relies on sparse, local and convex approximations. This article focuses on accelerated gradient descent, building on the recent results of \citet{ownWorkC}, which treats gradient descent. Including constraints in momentum-based algorithms is challenging: The presence of constraints requires a need for sudden and large changes in momentum (impacts) in order to avoid infeasible iterates. This requires us to not only characterize the smooth motion (if constraints are absent or the solution slides along the boundary of the feasible set), but also the non-smooth motion (if the solution suddenly hits the boundary of the feasible set).

Specific problems which have the potential to benefit from our approach include planning problems in reinforcement learning and/or optimal control \cite[see, e.g.,][]{Pavel}, optimizations over nonconvex matrix manifolds (such as the set of orthogonal matrices \cite[see, e.g.,][]{COLT}), distance geometry problems that arise in computational chemistry/NMR spectroscopy~\cite{Liberti}, optimal transport problems \citep{Abdul}, or supervised learning tasks that involve nonlinear constraints (for example in an imitation learning framework \cite{Wenshuai}, where nonlinear constraints arise from stability requirements on the closed-loop system). We will also demonstrate our approach on $\ell^p$-regularized inverse problems that arise in compressed sensing and signal processing, where we are not only able to obtain state-of-the-art results for $p=1$, but can also seamlessly handle the regime $0<p<1$.

The article is structured in the following way: Sec.~\ref{Sec:GF} summarizes earlier work of \citet{ownWorkC}, which covers gradient descent and sets the stage for discussing momentum-based algorithms in Sec.~\ref{Sec:AGF}. A variety of convergence results that capture both discrete-time and continuous-time models are presented in Sec.~\ref{Sec:ConvAnal}; in particular, in the nonconvex regime we establish convergence to stationary points and we derive accelerated rates in the convex regime. Sec.~\ref{Sec:NumEx} presents numerical experiments, which include nonconvex sparse regression and compressed sensing problems. The paper concludes with a short discussion in Sec.~\ref{Sec:Conc}.

\section{\hchange{Velocity Constraints}}\label{Sec:GF}
The fundamental idea of this work is to express constraints in terms of the forward increment or velocity of our algorithms instead of constraining the iterates or positions directly. As we will see shortly, this naturally leads to local, sparse and convex approximations of the feasible set. Our treatment builds upon \citet{ownWorkC}, which focused on gradient descent and gradient flow, whereas this work focuses on accelerated gradient algorithms. \hchange{The section briefly summarizes the results from \cite{ownWorkC} in order to set the stage for deriving accelerated algorithms in Sec.~\ref{Sec:AGF} and quantify their convergence rates in Sec.~\ref{Sec:ConvAnal}.} %In order to simplify the presentation, we begin with a brief review of this earlier work. The section ends with deriving a non-asymptotic linear convergence rate for the resulting gradient-descent algorithm, something which was missing in \cite{ownWorkC}. This will also set the stage for deriving accelerated rates in Sec.~\ref{Sec:ConvAnal}.

We model an optimization algorithm as a continuous-time or discrete-time dynamical system, whose equilibria correspond to the stationary points of \eqref{eq:fundProb}. In continuous time, the configuration of the system will be denoted by a function $x: [0,\infty) \rightarrow \mathbb{R}^n$, which is assumed to be absolutely continuous. 
A fundamental observation, lying at the heart of the current research, is that the constraint $x(t) \in C$, for all $t\geq 0$, is equivalent to the constraint $\dot{x}(t)^+ \in T_C(x(t))$, for all $t\geq 0$, $x(0) \in C$, where $T_C(x(t))$ denotes the tangent cone (in the sense of Clarke) of the set $C$ at $x(t) \in \mathbb{R}^n$, and $\dot{x}(t)^+$ denotes the forward velocity: $\dot{x}(t)^+:= \lim_{\dt\downarrow 0} (x(t+\dt)-x(t))/\dt$. The tangent cone $T_C(x)$ is defined as the set of all vectors $v$ such that $(x_k-x)/t_k \rightarrow v$ for two sequences $x_k\in C$ and $t_k\geq 0$ with $x_k\rightarrow x$, $t_k \rightarrow 0$. Provided that a constraint qualification holds (for example Mangasarian-Fromovitz or Abadie constraint qualification), the tangent cone can be expressed as
\begin{equation*}
T_C(x)=\{ v\in \mathbb{R}^n ~|~ \nabla g_i(x)\T v \geq 0, ~~\forall i \in I_x \},
\end{equation*}
where \hchange{$I_x=\{i\in [n_\text{g}]~|~g_i(x)\leq 0\}$} denotes the set of active inequality constraints at $x$.

We therefore conclude that the constraint $x(t)\in C$, which constrains the position $x$, is equivalent to a constraint on the forward velocity $\dot{x}^+$. We note that the velocity $\dot{x}$ is allowed to be discontinuous and need not exist for every $t\geq 0$.\footnote{We assume that $\dot{x}$ is of locally bounded variation, which means that on any compact interval $\dot{x}$ has countably many discontinuity points, where left and right limits exist.} For example, if the trajectory $x$ reaches the boundary of the feasible set, an instantaneous jump of the velocity might be required to ensure that $x$ remains in $C$.

In discrete time, however, this equivalence between position and velocity constraints no longer holds, since $T_C(x)$ is only a first-order approximation of the feasible set. %\bernhard{(is that the only reason, or could you also get a problem that one 'jumps' across a boundary?)} %
Thus, implementing $(x_{k+1}-x_k)/T \in T_C(x_k)$ may lead to infeasible iterates. \citet{ownWorkC} therefore suggest to introduce the velocity constraint $V_\alpha(x)$, see \eqref{eq:defVa} with $I=I_x$, which includes the restitution coefficient $\alpha > 0$.
The following remarks motivate \eqref{eq:defVa} \hchange{and are important for understanding the accelerated algorithms presented subsequently}:
\begin{itemize}
\item[i)] For $x\in C$, the set $V_\alpha(x)$ reduces to the tangent cone $T_C(x)$ (assuming constraint qualification).
\item[ii)] For a fixed $x\in \mathbb{R}^n$, $V_\alpha(x)$ is a convex polyhedral set involving only the active constraints $I_x$. The set $V_\alpha(x)$ therefore amounts to a sparse and linear approximation of the feasible set $C$, even if $C$ is nonconvex.
\item[iii)] In continuous time, the constraint $\dot{x}(t)^+\in V_\alpha(x(t))$ for all $t\geq 0$ implies 
\begin{align}
g_i(x(t)) &\geq \min\{g_i(x(0)) e^{-\alpha t}, 0\}, \label{eq:groenwall}
\end{align}
for all $t\geq 0$ and all $i\in\{1,\dots,n_\text{g}\}$, which can be verified with Gr\"{o}nwall's inequality. This means that potential constraint violations decrease at rate $\alpha$.
\end{itemize}

\begin{remark}\label{rem:nonempty} Nonemptiness of $V_\alpha(x)$:
If $C$ is convex, $V_\alpha(x)$ is guaranteed to be nonempty for all $x\in \mathbb{R}^n$. If $C$ is nonconvex, nonemptiness of $V_\alpha(x)$ for all $x$ in a neighborhood of $C$ is guaranteed if the Mangasarian-Fromovitz constraint qualification holds for all $x\in C$. We note that the Mangasarian-Fromovitz constraint qualification is  generic in the following sense: Provided that $g$ is semi-algebraic (these cases include all the usual functions used in optimization) there exists $\epsilon_0\in \mathbb{R}^m, \epsilon_0> 0$, such that the set $C_\epsilon:=\{x\in \mathbb{R}^n~|~g(x)\geq -\epsilon\}$ satisfies the Mangasarian-Fromovitz constraint qualification for all $x\in C_\epsilon$ and for all $\epsilon\in (0,\epsilon_0)$; see \citet{Bolte}.
\end{remark}

\hchange{The continuous-time gradient flow dynamics that were studied in \citet{ownWorkC} arise from the following equation:
\begin{equation}
\dot{x}(t)^+ +\nabla f(x(t))=R(t), \quad -R(t)\in N_{V_\alpha (x(t))}(\dot{x}(t)^+),  \label{eq:form1}
\end{equation}
for all $t\geq 0$, where $N_{V_\alpha(x(t))}(\dot{x}(t)^+)$ denotes the normal cone of the set $V_\alpha(x(t))$ at $\dot{x}(t)^+$. Thus, the variable $R(t)$ can be regarded as a constraint force that imposes the constraint $\dot{x}(t)^+\in V_\alpha(x(t))$.}
\hchange{In discrete time, it suffices to replace $\dot{x}(t)^+$ by $(x_{k+1}-x_k)/T$ and $x(t)$ by $x_k$ in order to obtain the corresponding constrained gradient-descent dynamics. These can be expressed as (see \cite{ownWorkC} for details)
\begin{equation}
    x_{k+1} = x_k + T \argmin_{v\in V_\alpha(x_k)} |v+\nabla f(x_k)|^2, \label{eq:GDdt}
\end{equation}
where $T>0$ is the step size and can be interpreted as a modified projected gradient scheme, where projections over the entire feasible set $C$ are replaced with optimizations over the sparse and convex approximation $V_\alpha(x_k)$.} %The remark about the nonemptiness of $V_\alpha(x_k)$ applies in the same way.

\hchange{The results from \citet{ownWorkC} establish convergence of \eqref{eq:form1} and \eqref{eq:GDdt}. In continuous time, it was shown that even when $f$ and $C$ are nonconvex, the trajectories of \eqref{eq:form1} converge to the set of stationary points. Moreover, if $f$ is strongly convex with strong convexity constant $\mu$ and $\alpha$ is set to $2\mu$, the trajectories converge from any initial condition to the minimizer of \eqref{eq:fundProb} at a linear rate, which scales with $1/\kappa$, where $\kappa$ is the condition number of $f$. A similar, albeit asymptotic, convergence rate was found in discrete time for an adequate choice of step size. However, a non-asymptotic linear convergence result was missing in discrete time; the asymptotic result required a relatively complex analysis of the algorithm's dynamics. For completeness, we therefore provide a non-asymptotic convergence result in App.~\ref{App:Sec1} (see Thm.~\ref{Thm:GDV}).}

\hchange{The results from Thm.~\ref{Thm:GDV} can be compared to a type of composite optimization \cite[see, e.g., ][and others]{BertsekasComp, PaquetteComp, FletcherComp, BolteComp, NesterovComp, NicolasComp}, which can also be applied to problems of the type \eqref{eq:fundProb} resulting in updates similar to \eqref{eq:GDdt}. The resulting convergence rates known in the literature for the convex setting are similar to Thm.~\ref{Thm:GDV}, see, e.g., \cite[Thm.~2]{BolteComp}, \cite[Thm.~5]{NesterovComp}, \cite{NicolasComp},\footnote{\hchange{In \cite{NicolasComp} there is a distinction between number of calls to a linear minimization oracle and number of gradient evaluations. Compared to Thm.~\ref{Thm:GDV}, \cite{NicolasComp} requires a similar number of linear minimization calls, but many fewer gradient evaluations (\cite{NicolasComp} is accelerated in terms of gradient evaluations but not in terms of linear minimization steps).}} with the important difference that \cite{BolteComp, NesterovComp,NicolasComp} do not require knowledge of the constant $L_l$. However, the current article highlights that much faster and accelerated rates can be achieved in both the smooth and strongly convex case, as well as in the smooth convex case by adding momentum.}

\hchange{To summarize, \eqref{eq:form1} and \eqref{eq:GDdt} implement gradient-flow and gradient-descent dynamics that can handle constraints and converge linearly with the typical $1/\kappa$ (continuous time) and $1/\kappa_l$-rate (discrete time) if the objective function $f$ is smooth and strongly convex, where $\kappa_l=L_l/\mu$ and $L_l$ relates to the smoothness constant of the Lagrangian.\footnote{\hchange{We recall that the Lagrangian is given by $f(x)-{\lambda^*}\T g(x)$ with $\lambda^*$ an optimal multiplier of \eqref{eq:fundProb}. Hence, the smoothness constant $L_l$ is bounded by $L_f+\sum_i^{n_g} \lambda_i^* L_g^i$, where $L_g^i$ refers to the smoothness constant of $g_i$ and $L_f$ to the smoothness constant of $f$.}} The set $V_\alpha(x)$ can be seen as a velocity constraint and provides a natural generalization of the tangent cone. It also reduces the computational cost for each iteration, since projections on the entire feasible set are avoided. In the next section, we generalize these ideas to algorithms that have momentum. This will naturally lead to accelerated algorithms that converge linearly at a rate of $1/\sqrt{\kappa_l}$ (if $f$ is smooth and strongly convex) or at the sublinear rate $1/t^2$ (if $f$ is smooth and convex), which is a significant speedup. We will also derive convergence if $f$ and $C$ are nonconvex.}

\section{Accelerated Gradient Flow}\label{Sec:AGF}
We begin our presentation with a derivation in continuous time. The corresponding discrete-time algorithms will be stated subsequently. A natural starting point is the work of authors such as \citet{PolyakHeavyBall}, \citet{SuAcc},
%, \citet{Ebenbauer} 
and \citet{ourWork}, who argued that in the \emph{unconstrained} case, accelerated optimization algorithms can be viewed as dynamical systems described by second-order differential equations. A canonical example is the following:
\begin{equation}
\dot{u}(t)+2 \delta u(t) +\nabla f(x(t)+\beta u(t))=0,  \label{eq:diff1}
\end{equation}
where we use the variable $u(t)=\dot{x}(t)$ to denote the velocity (or momentum), and where $\delta \geq 0$ and $\beta\geq 0$ are damping parameters.\footnote{The variables $\delta$, $\beta$ may also depend on time. For ease of presentation we focus on the case where $\delta$ and $\beta$ are fixed, but also state corresponding results for time-varying parameters.}

In the presence of constraints, $u(t)$ is allowed to be discontinuous, which is in sharp contrast to \eqref{eq:diff1}. For example, if the trajectory $x(t)$ approaches the boundary of the feasible set, an instantaneous jump in $u(t)$ might be required to ensure that $x(t)$ remains feasible. Thus, compared to \eqref{eq:form1}, where the state $x(t)$ is absolutely continuous, we are now in a position where we allow for the state $(x(t),u(t))$ (which includes the velocity $u$) to be discontinuous. This means that in addition to a differential equation of the type \eqref{eq:form1}, which characterizes the smooth motion, we also prescribe how the discontinuities in $u$ can arise. If we regard $(x(t),u(t))$ as the position and velocity of a mechanical system, discontinuities in $u$ have a mechanical meaning as impacts, which are described by a corresponding impact law. The mathematical formalism, which enables discontinuities in $u$, is summarized next.

We regard the state $z:=(x,u)$ to be the result of an integration process:
\begin{equation*}
z(t)=z(t_0) + \int_{t_0}^{t} \diff z, \quad \forall t\geq t_0.
\end{equation*}
However, instead of the usual Lebesgue density $\diff z=\dot{z}(t)\dt$, $\diff z$ now represents a differential measure \citep{Leine}, and admits both a density with respect to the Lebesgue measure (denoted by $\dt$), as well as a density with respect to an atomic measure (denoted by $\diff \eta$). As is common in non-smooth mechanics, we assume that $z(t)$ is of locally bounded variation and does not contain any singular terms. This means that $z(t)$ can be decomposed as an absolutely continuous function and a piecewise constant step function \citep{Leine}. At every time $t$, $z(t)$ has well-defined left and right limits, $z(t)^-$ and $z(t)^+$, even though $z(t)$ might not exist or might not be of interest. We can express the differential measure $\diff z$ as
%\begin{equation*}
$\diff z=\dot{z}(t)\dt + (z(t)^+-z(t)^-) \diff \eta,$
%\end{equation*}
and the integration over an interval $[t_0,t]$, which contains the time instants $t_{\text{d}i}$, $i=1,2,\dots$, where $z(t)$ is discontinuous, yields
\begin{equation*}
z(t)^+=z(t_0)^- + \int_{t_0}^{t} \dot{z}(t) \dt + \sum_{i\geq 1} z(t_{\text{d}i})^+-z(t_{\text{d}i})^-.
\end{equation*}
As a consequence of allowing the state to be discontinuous, we need to delineate both the density $\dot{z}(t)$ with respect to the Lebesgue measure $\dt$ (which describes the smooth part of the motion) as well as the density $z(t)^+-z(t)^-$ (which describes the non-smooth motion) for fully determining the state trajectory $z(t)$. By analogy to non-smooth mechanics \citep[see, e.g.,][]{Studer}, this can be achieved with the following measure-differential inclusion:
\begin{equation}
\diff u + 2 \delta u \dt + \nabla f(x+\beta u)\dt = \sum_{i\in I_x} \nabla g_i(x) \diff \lambda_i,\quad  
\gamma_i^+ + \epsilon \gamma_i^- \in N_{\mathbb{R}_{\leq 0}}(-\diff \lambda_i), \quad i\in I_x,\label{eq:mde1}
\end{equation}
where $\epsilon\in [0,1)$ is a constant, $\gamma_i$ is the velocity associated with the $i$th constraint and is defined as
\begin{equation*}
\gamma_i(x,u):=\nabla g_i(x)\T u + \alpha g_i(x),
\end{equation*}
and where we have omitted the dependence on $t$ (as we will do frequently in the subsequent presentation). We note that the set $I_x$ (or $I_{x(t)}$ in full notation) is time-dependent.
The normal cone inclusion in \eqref{eq:mde1} is illustrated with Fig.~\ref{Fig:NCi} and will be further discussed below. The constant $\epsilon$ has the interpretation of a restitution coefficient, whereby $\epsilon=0$ leads to inelastic collisions, and $\epsilon=1$ yields elastic collisions. \hchange{We use the following solution concept for \eqref{eq:mde1}:
\begin{definition}
    (Solution of a measure differential inclusion, see \cite[Def.~4.7]{Leine}) A solution $z(t)=\varphi(t,t_0,z_0)$ of the measure differential inclusion \eqref{eq:mde1} with initial condition $z(t_0)^-=z_0$ is a function $z:\mathbb{R} \rightarrow \mathbb{R}^{2n}$, being of locally bounded variation, which fulfills \eqref{eq:mde1} and $\diff x=u(t) \dt$ for all $t\geq t_0$ and which is not defined at its discontinuity points.
\end{definition}} Measure-differential inclusions are common in non-smooth mechanics. \hchange{However, establishing existence of solutions is challenging, and solutions are rarely unique. Exceptions arise in situations where the right-hand side of the differential inclusion is monotone, which is, however, too restrictive for our purposes. We refer the reader for existence results to the literature, see, for example \cite{DiPiazza, Leine2} and references therein.} %These do, however, require additional assumptions} where various existence results for inclusions of the type \eqref{eq:mde1} have been established; see, for example, \citet{DiPiazza, Leine2} and references therein. 

We note that if $x$ is in the interior of the feasible set, $I_x$ is empty, and therefore \eqref{eq:mde1} reduces to \eqref{eq:diff1}. This means that \eqref{eq:mde1} generalizes \eqref{eq:diff1} from the unconstrained case to the constrained case by including the constraint $\gamma_i^+ + \epsilon \gamma_i^- \in N_{\mathbb{R}_{\leq 0}} (-\diff \lambda_i)$, which, as we will discuss below, describes the discontinuities of $u$ via Newton's impact law and imposes the velocity constraint $u(t)\in V_\alpha(x(t))$, whenever $u(t)$ exists.

%, which concludes that $\gamma_i^+ + \epsilon \gamma_i^- \geq 0$ for all $t\geq 0$ (a.e.). If $u$ happens to be continuous, this implies $\gamma_i\geq 0$ for all $i\in I_x$ (since $\gamma_i^+=\gamma_i^-=\gamma_i$), which is equivalent to $u\in V_\alpha (x)$.

\begin{figure}
\newlength{\figurewidth}
\newlength{\figureheight}
\begin{center}
\setlength{\figurewidth}{.35\columnwidth}
\setlength{\figureheight}{.28\columnwidth}
\begin{tikzpicture}
%\begin{axis}[xmin=-3,xmax=3,ymin=-1,ymax=2,xtick={-1,0,1,2,3},ytick={1,2}]
%\end{axis}
\draw[->](-1,0) -- (2.2,0) node [below] {$\gamma_i^+ + \epsilon_i \gamma_i^-$};
\draw[->](0,-2) -- (0,0.8) node [right] {$-\diff \lambda_i$};
\draw[-,line width=1mm](0,0) -- (2,0);
\draw[-,line width=1mm](0,-2) -- (0,0);
\node[] () [below = 2.5cm] {}; % adds white space...
\end{tikzpicture}\hspace{1cm}
\scalebox{0.85}{\input{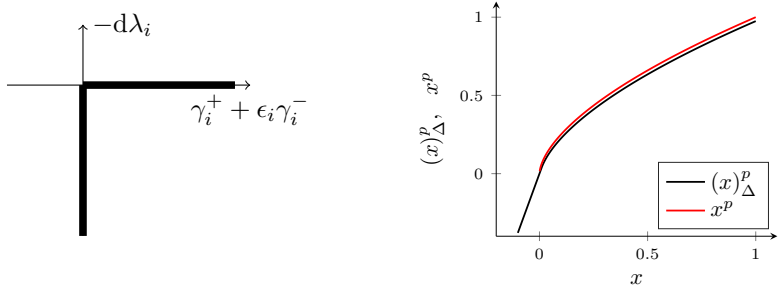}}
\end{center}
\caption{The left panel shows the normal cone inclusion $\gamma_i^+ +\epsilon \gamma_i^-\in N_{\mathbb{R}_{\leq 0}}(-\diff \lambda_i)$, which is equivalent to the complementarity condition $\diff \lambda_i\geq 0$, $\gamma_i^+ + \epsilon \gamma_i^- \geq 0$, $\diff \lambda_i (\gamma_i^+ + \epsilon \gamma_i^-)=0$. The right panel shows the approximation $(x)^p_\Delta$ of $x^p$ for $\Delta=0.01$ and $p=0.6$. There is an excellent agreement between the approximation and $x^p$ even though $\Delta$ is comparably large. In the numerical experiments, see Sec.~\ref{Sec:NumEx}, $\Delta$ is set to $10^{-6}$.}
\label{Fig:NCi}
\end{figure}

It is important to note that \eqref{eq:mde1} is understood in the sense of integration: For any compact time interval $[t_0,t_1]$, \eqref{eq:mde1} defines the difference $u(t_1)^+-u(t_0)^-$, which is obtained by integrating $\diff u$ from $t_0$ to $t_1$; similarly, the difference $x(t_1)^+-x(t_0)^-$ is obtained by integrating $u(t) \dt$. This means that \eqref{eq:mde1} has a very natural discretization, which will be discussed in the next paragraph. \hchange{The second and the third paragraphs} describe the interpretation of \eqref{eq:mde1} in terms of its smooth and non-smooth components. Formal convergence results in continuous and discrete time will be derived in Sec.~\ref{Sec:ConvAnal}.

\paragraph{Discretization of \eqref{eq:mde1}:} The measure-differential inclusion \eqref{eq:mde1} lends itself to the following discretization: $\diff u=u_{k+1}-u_k$, $\dt = T_k$, $\diff \lambda_i=\Lambda_{ki}$, $\gamma_i^+=\gamma_i(x_{k},u_{k+1})$, $\gamma_i^-=\min\{0,\gamma_i(x_k,u_k)\}$, where $T_k>0$ is the step size.\footnote{The $\min$ in $\gamma_i^-$ ensures $u_{k+1} \in V_\alpha(x_k)$, which is not automatically satisfied in discrete time.} This yields
\begin{align*}
u_{k+1}-u_k + \hchange{2}\delta u_k T_k + \nabla f(x_k  + \beta u_k) T_k &= \sum_{i\in I_{x_k}} \nabla g_i(x_k) \Lambda_{ki},\\
\gamma_i(x_k,u_{k+1}) + \epsilon \min\{0,\gamma_i(x_k, u_k)\} &\in N_{\mathbb{R}_{\leq 0}}(-\Lambda_{ki}), \quad i\in I_{x_k}.
\end{align*}
We use the newly computed momentum for updating the position $x_k$:
$x_{k+1}=x_k + T_k u_{k+1}$, %\label{eq:dis2}
which is motivated by analogy to unconstrained optimization. (This discretization scheme is found to be superior compared to the standard Euler method \cite{ourWork2}.) The resulting update for $u_{k+1}$ can be interpreted as a stationarity condition for $u_{k+1}$, and as a result, the proposed algorithm can be summarized as follows:
%\fbox{
%\begin{minipage}{\linewidth}
%\begin{gather}
\begin{empheq}[box=\widefbox]{align}
%\begin{align}
u_{k+1}&\!=\!\argmin_{v\in \mathbb{R}^n} \!\frac{1}{2} |v\!-\!u_k\!+\!2 \delta u_k T_k\!+\!\nabla f(x_k+\beta u_k) T_k|^2, \nonumber\\
&~~\text{s.t.} ~~\gamma_i(x_k,v)\geq -\epsilon \min\{\gamma_i(x_k,u_k),0\}, ~~ i\in I_{x_k}\nonumber\\
x_{k+1}&\!=\!x_{k}+T_k u_{k+1}.\label{eq:dis1}
%\end{align}
\end{empheq}
%
%\end{gather}
%}
Remark~\ref{rem:nonempty} applies here in the same way: If $C$ is convex and $\epsilon=0$, the feasible set in \eqref{eq:dis1} is guaranteed to be nonempty, which means that $u_{k+1}$ is well defined (existence and uniqueness). If $C$ is nonconvex or $\epsilon> 0$, nonemptiness of the feasible set is guaranteed if constraint qualifications are satisfied (for example Mangasarian-Fromovitz). These constraint qualifications are generic, as discussed in Remark~\ref{rem:nonempty}, which ensures that $u_{k+1}$ is well defined as long as $x_k$ stays in a neighborhood of the feasible set. As will be shown with our convergence analysis (see Sec.~\ref{Sec:ConvAnal}), we can indeed ensure that $x_k$ remains in a neighborhood of $C$; the size of the neighborhood can be controlled by the value of $T_k$. The pseudo-code of the full algorithm is listed in App.~\ref{App:Alg}.

The following remarks are important:
\begin{itemize}
\item[i)] The update \eqref{eq:dis1} has the interpretation of choosing $u_{k+1}$ to be as close as possible to the update in the unconstrained case subject to the velocity constraint $\gamma_i(x_k,u_{k+1}) \geq -\epsilon \min\{\gamma_i(x_k,u_k),0\}$. As a result, in case $I_{x_k}$ is empty, \eqref{eq:dis1} reduces to a standard momentum-based method; if $\beta=0$ we obtain the heavy-ball algorithm, if $\beta\neq 0$ we obtain Nesterov's method.
\item[ii)] The update \eqref{eq:dis1} includes only the constraints $I_{x_k}$ which are active at iteration $k$. The constraint on $v$ in \eqref{eq:dis1} is guaranteed to be convex, even if the underlying feasible set is nonconvex. The constraints in \eqref{eq:dis1} yield therefore a sparse, local and convex approximation of the feasible set. Instead of performing optimizations on the position level as is common with projected gradients or the Frank-Wolfe method, \eqref{eq:dis1} suggests to constrain the velocities $u_k$, $k=1,2,\dots$.
\end{itemize}

We now proceed to give an interpretation and explanation of the continuous-time dynamics \eqref{eq:mde1}.

\paragraph{Smooth motion:}
If $u(t)$ happens to be absolutely continuous in the interval $(t_0,t_1)$, its differential measure reduces to $\dot{u}(t)\dt$. Similarly, the multipliers $\diff \lambda_i$ have only a density with respect to the Lebesgue measure $\dt$, which we denote by $\lambda_i(t)$, i.e., $\diff \lambda_i=\lambda_i(t) \dt$. As a result, \eqref{eq:mde1} reduces to 
\begin{equation}
\dot{u} + 2\delta u + \nabla f(x+\beta u) = \sum_{i\in I_{x}} \nabla g_i(x)\lambda_i, \label{eq:smoothpart}
\end{equation} 
for all $t\in (t_0,t_1)$ (a.e.). Furthermore, absolute continuity of $u(t)$ implies absolute continuity of $\gamma_i$, i.e., $\gamma_i^+=\gamma_i^-=\gamma_i$. In the limit $\dt \downarrow 0$, the inclusion in \eqref{eq:mde1} therefore reduces to
\begin{equation*}
(1+\epsilon) \gamma_i \in N_{\mathbb{R}_{\leq 0}} (-\lambda_i) \quad \Leftrightarrow \quad \gamma_i \in N_{\mathbb{R}_{\leq 0}} (-\lambda_i),
\end{equation*}
for all $i\in I_{x(t)}$ and for all $t\in (t_0,t_1)$ (a.e.). ($N_{\mathbb{R}_{\leq 0}}$ is a cone; we can therefore divide by $1+\epsilon>0$.) The normal cone inclusion prescribing the relationship between $\gamma_i$ and $\lambda_i$ is similar to Fig.~\ref{Fig:NCi}. 

From a physics perspective the normal cone inclusion $\gamma_i \in N_{\mathbb{R}\leq 0} (-\lambda_i)$ represents a force law, which by conic duality can also be expressed as (see again Fig.~\ref{Fig:NCi})
\begin{equation*}
-\lambda_i \in N_{\mathbb{R} \geq 0} (\gamma_i).
\end{equation*}
The sum $\nabla g_i(x(t)) \lambda_i$ over $i\in I_x$ on the right-hand side of \eqref{eq:smoothpart} therefore has a physical interpretation as a constraint force:
\begin{equation*}
-R=-\sum_{i\in I_x} \nabla g_i(x) \lambda_i \in N_{V_\alpha (x)} (u),
\end{equation*}
which imposes the velocity constraint $u(t)\in V_\alpha (x(t))$ for all $t\in (t_0,t_1)$ (a.e.). By virtue of Gr\"onwall's inequality this ensures that constraint violations decrease linearly at rate $\alpha$, as highlighted in \eqref{eq:groenwall}.

We therefore conclude that in case of smooth motion, the measure-differential inclusion \eqref{eq:mde1} generalizes the differential equation \eqref{eq:diff1} from the unconstrained case to the constrained case, where the additional constraint force $R(t)$ imposes the velocity constraint $u(t)\in V_\alpha(x(t))$ (for almost all $t$). The introduction of the force $R(t)$ is analogous to \eqref{eq:form1}.

Since the motion is smooth for almost every $t$, the normal cone inclusion in \eqref{eq:mde1} guarantees the satisfaction of the velocity constraint $u(t)\in V_{\alpha}(x(t))$ (or equivalently, $\gamma_i(x(t),u(t))\geq 0$ for all $i\in I_{x(t)}$) for all $t\geq 0$ (a.e.). However, when a new constraint arises at time $t_0$, there might be a situation where $\gamma_i^-(x(t_0),u(t_0))<0$. In such a case an impact will be required to ensure that $\gamma_i^+(x(t_0),u(t_0))\geq 0$. This is the subject of the next paragraph.
%We therefore conclude that if $g_i(x(t))<0$ the normal cone inclusion guarantees $\gamma_i^-(x(t),u(t))\geq 0$ and $\gamma_i^+(x(t),u(t))\geq 0$. However, $\gamma_i^-(x(t_0),u(t_0))$ can clearly be negative if constraint $i$ becomes active at time $t_0$, that is, $g_i(x(t_0))=0$, $g_i(x(t))>0$ for $t\leq t_0$. This requires a jump in $u$ at time $t_0$ in order to ensure that $\gamma_i(x(t_0),u(t_0))^+>0$, which will be the subject of the next paragraph.

\paragraph{Non-smooth motion:} In order to derive the non-smooth motion we integrate \eqref{eq:mde1} over a time instant $\{t\}$, where $u(t)$ is discontinuous; that is, $u(t)^- \neq u(t)^+$. Due to the fact that the singleton $\{t\}$ has zero Lebesgue measure, we are left with the atomic parts, leading to $\diff u=(u(t)^+-u(t)^-)\diff \eta$, $\diff \lambda_i:=\Lambda_i \diff \eta$,
\begin{equation}
u(t)^+-u(t)^-=\sum_{i\in I_{x(t)}} \nabla g_i(x(t)) \Lambda_i,\quad 
\gamma_i^+ + \epsilon \gamma_i^- \in N_{\mathbb{R}\leq 0}(-\Lambda_i), \quad i\in I_{x(t)}. \label{eq:ns1}
\end{equation}
The normal cone inclusion should be interpreted as a generalization of Newton's impact law. For $\Lambda_i>0$, it implies $\gamma_i^+ + \epsilon \gamma_i^-=0$, meaning that the velocity associated to constraint $i$ after impact, $\gamma_i^+$, is $-\epsilon \gamma_i^-$, where $\gamma_i^-$ is the velocity associated to constraint $i$ before impact. From the discussion of the smooth motion it follows $\gamma_i(x(t_0),u(t_0))^-$ at time $t_0$ can only be negative if the constraint $i$ becomes active at time $t_0$; that is, $i\not\in I_{x(t)}$ for $t<t_0$ and $i\in I_{x(t)}$ for $t=t_0$. This necessitates a discontinuity in $u$ at time $t_0$, which according to the above normal cone inclusion comes in two variants: i) $\Lambda_i>0$, which implies $\gamma_i^+=-\epsilon \gamma_i^-$ and ii) $\Lambda_i=0$, which implies $\gamma_i^+\geq -\epsilon \gamma_i^-$. In variant i), the impulsive force $\Lambda_i$ contributes the component $\Lambda_i \nabla g_i(x(t))$ (normal to constraint $i$) to the velocity jump $u(t_0)^+-u(t_0)^-$, whereas in variant ii), there is no such contribution. Both variants ensure $\gamma_i(x(t_0),u(t_0))^+\geq -\epsilon\gamma_i(x(t_0),u(t_0))^-\geq 0$.

The characterization of the non-smooth motion according to \eqref{eq:ns1} can be interpreted as a stationarity condition for $u(t)^+$, which yields
\begin{multline}
u(t)^+=\argmin_{v\in \mathbb{R}^n} \frac{1}{2} |v-u(t)^-|^2\quad \text{s.t.}~~ \gamma_i(x(t),v) \geq -\epsilon\gamma_i(x(t),u(t)^-),~~\forall i\in I_{x(t)}. \label{eq:statcond333}
\end{multline}
The minimization in \eqref{eq:statcond333} has the following meaning: for each $u(t)^-$ there is a unique $u(t)^+$, which is chosen to be as close as possible to $u(t)^-$ subject to Newton's impact law $\gamma_i^+\geq -\epsilon \gamma_i^-$ for all $i\in I_{x(t)}$.
%In a first step we will discuss the case of smooth motion and the case of non-smooth motion separately. We will then argue that both cases can be described in a combined way as measure differential inclusion. This will set the stage for the following convergence proofs.

\paragraph{Equilibria of \eqref{eq:mde1}:}
The equilibria of \eqref{eq:mde1} are obtained from $x(t)\equiv x_0$, $\diff \lambda_i \equiv \lambda_{0i} \dt$, $u(t)\equiv 0$, $\diff u\equiv 0$, where $x_0\in \mathbb{R}^n$ and the multipliers $\lambda_{0i}\geq 0$, $i\in I_{x_0}$ are constant. As a result, \eqref{eq:mde1} reduces to 
\begin{multline*}
-\nabla f(x_0)+\sum_{i\in I_{x_0}}\nabla g_i(x_0) \lambda_{i0}=0, ~~ (1+\epsilon) \alpha g_i(x_0) \in N_{\mathbb{R}_{\leq 0}}(-\lambda_{i0}), ~~i\in I_{x_0}.
\end{multline*}
The normal cone inclusion can be simplified by dividing by $\alpha (1+\epsilon)>0$ (the normal cone is a cone), which implies that $g_i(x_0)$ and $\lambda_{i0}$ satisfy the complementarity conditions
\begin{equation*}
g_i(x_0)\geq 0, \quad \lambda_{i0} \geq 0, \quad \lambda_{i0} g_i(x_0)=0, \quad \forall i\in I_{x_0}.
\end{equation*}
Hence, the equilibria of \eqref{eq:mde1} satisfy the Karush-Kuhn-Tucker conditions of \eqref{eq:fundProb}, which means that the stationary points of  \eqref{eq:fundProb} are indeed equilibria.

\section{Convergence Analysis}\label{Sec:ConvAnal}
The following section discusses the convergence of trajectories of \eqref{eq:mde1} and \eqref{eq:dis1}, and characterizes the rate of convergence. We start by summarizing the continuous-time results. Without loss of generality we assume that $f$ is normalized such that the Lipschitz constant of the gradient is unity. 
\begin{theorem}\label{Thm:stability}
Let $(x(t)$, $u(t))$ be a trajectory satisfying \eqref{eq:mde1} with $x(0)\in C$. Let $f$ be $1$-smooth, let $g$ satisfy the Mangasarian-Fromovitz constraint qualification, and let either $f$ be convex or $2\delta - \beta > 0$. Then, $x(t)$ converges to the set of stationary points, while $u(t)$ converges to zero. Moreover, each isolated local minimum corresponds to an asymptotically stable equilibrium in the sense of Lyapunov.
\end{theorem}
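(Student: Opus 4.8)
The plan is a Lyapunov/LaSalle argument built on the total energy
\[
V(x,u):=f(x)+\tfrac12|u|^2 .
\]
I would first show that the trajectory stays feasible (so that along it $V$ is bounded below and the constraint forces do no work), then that $V$ decays along the smooth arcs of \eqref{eq:mde1} and does not increase across impacts, and finally invoke an invariance principle to identify the $\omega$-limit set with the equilibria of \eqref{eq:mde1} — which, by the KKT computation at the end of Sec.~\ref{Sec:AGF}, are exactly the stationary points of \eqref{eq:fundProb}.

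\textbf{Feasibility and the smooth phase.} Since $x(0)\in C$ and the normal-cone inclusion in \eqref{eq:mde1} enforces $u(t)\in V_\alpha(x(t))$ for a.e.\ $t$, the Gr\"onwall estimate \eqref{eq:groenwall} with $g_i(x(0))\ge 0$ yields $g_i(x(t))\ge 0$ for all $t$; hence $x(t)\in C$, $V_\alpha(x(t))$ is nonempty by the Mangasarian--Fromovitz qualification (cf.\ Remark~\ref{rem:nonempty}), and the dynamics are well posed. On the active set $g_i(x(t))=0$, so $\gamma_i=\nabla g_i(x)\T u$, and the complementarity relations dual to the inclusion ($\lambda_i\ge 0$, $\gamma_i\ge 0$, $\lambda_i\gamma_i=0$) give $u\T\!\big(\sum_{i\in I_x}\nabla g_i(x)\lambda_i\big)=0$: the constraint force does no work. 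A short computation along a smooth arc then gives $\dot V=-2\delta|u|^2-u\T\!\big(\nabla f(x+\beta u)-\nabla f(x)\big)$. If $f$ is convex, monotonicity of $\nabla f$ makes the last term nonnegative, so $\dot V\le -2\delta|u|^2$; if $2\delta-\beta>0$, the $1$-smoothness of $f$ bounds it by $\beta|u|^2$, so $\dot V\le -(2\delta-\beta)|u|^2$. In both cases $\dot V\le -c|u|^2$ with $c>0$.

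\textbf{Impacts and the limit set.} At a discontinuity time $x$ is continuous and only $u$ jumps, so $V^{+}-V^{-}=\tfrac12|u^{+}|^2-\tfrac12|u^{-}|^2$; using $u^{+}-u^{-}=\sum_i\Lambda_i\nabla g_i(x)$, the impact law $\Lambda_i(\gamma_i^{+}+\epsilon\gamma_i^{-})=0$, and $\gamma_i^{\pm}=\nabla g_i(x)\T u^{\pm}$ on the active set, a few lines give $V^{+}-V^{-}=-\tfrac{1-\epsilon}{2(1+\epsilon)}|u^{+}-u^{-}|^2\le 0$. Thus $V$ is non-increasing along the whole measure-differential evolution; assuming, as is customary, that $f$ has bounded sublevel sets on $C$ (so the trajectory is precompact), $V$ is bounded below, converges, and $\int_0^\infty|u|^2\dt<\infty$, so any accumulation of impacts carries only finite total energy loss. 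A LaSalle-type invariance principle then shows that the (nonempty, compact) $\omega$-limit set $\Omega$ is invariant under \eqref{eq:mde1} with $V$ constant on it; hence $u\equiv 0$ along any motion in $\Omega$, so $x$ is constant there, and substituting $u\equiv 0$, $\diff u\equiv 0$ into \eqref{eq:mde1} reproduces precisely the KKT system of the equilibrium computation. Therefore $x(t)$ converges to the set of stationary points and $u(t)\to 0$.

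\textbf{Isolated minima and the main difficulty.} For an isolated local minimum $x^*$, the point $(x^*,0)$ is an equilibrium of \eqref{eq:mde1}, and in a small neighborhood the shifted energy $f(x)-f(x^*)+\tfrac12|u|^2$ is continuous, nonnegative on $C$, vanishes only at $(x^*,0)$, and is non-increasing along trajectories; this gives Lyapunov stability, and the localized version of the invariance argument (with the isolation of $x^*$ ruling out other stationary points nearby) upgrades it to asymptotic stability. The single genuinely delicate step is the invariance principle: solutions of \eqref{eq:mde1} are not classical, so one must ground it in an appropriate existence/uniqueness (or continuous-dependence) theory for measure-differential inclusions of this type and a LaSalle theorem valid for that solution concept, and check that no pathology such as a finite-time accumulation of impacts spoils the $\omega$-limit analysis. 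The remaining pieces — the feasibility estimate, the no-work property of the constraint force, the energy balance at impacts, and the cross-term bound — are routine once $V$ is identified.
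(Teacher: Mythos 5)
Your Lyapunov function, the feasibility argument via Gr\"onwall, the no-work/energy-dissipation computation along smooth arcs, and the non-positivity of the energy jump at impacts all coincide with the paper's proof (your closed-form impact loss $-\tfrac{1-\epsilon}{2(1+\epsilon)}|u^+-u^-|^2$ is exact only for a single active constraint, but the inequality $V^+\leq V^-$ is what matters and holds as in the paper). The genuine gap is exactly the step you flag yourself: the identification of the $\omega$-limit set through a LaSalle-type invariance principle. For the measure-differential inclusion \eqref{eq:mde1} there is no established semiflow with uniqueness or continuous dependence on initial data (impacts, $\epsilon>0$, and multiple simultaneously active constraints are precisely the situations where these properties are problematic), so invariance of the limit set under the dynamics --- the load-bearing ingredient of LaSalle --- is not available, and your argument that ``$V$ constant on $\Omega$ implies $u\equiv 0$ and hence KKT'' does not go through as stated. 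Moreover, even granting invariance, concluding that a limit point is a KKT point requires controlling the reaction measure $\diff\lambda_i$ in the limit; your proposal only uses the Mangasarian--Fromovitz condition for nonemptiness of $V_\alpha$, never for this.

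The paper avoids LaSalle altogether. It first gets $u(t)\to 0$ from a Barbalat-type lemma for functions of locally bounded variation (square integrability of $u$ from the energy decay, uniform local integrability of $\dot u$ from the BV structure, citing Teel). Then, at a cluster point $\bar x$ of $x(t)$, it integrates \eqref{eq:mde1} over windows $[t_k,t_k+\tau]$, so that $-\nabla f(\bar x)\tau+\sum_i\nabla g_i(\bar x)\int\diff\lambda_i\to 0$, and uses the Mangasarian--Fromovitz qualification in a compactness/contradiction argument to show the integrated multipliers are bounded; an accumulation point $\bar\lambda$ then furnishes the KKT conditions at $\bar x$. If you want to salvage your route, you would have to supply an invariance principle valid for this solution concept (including ruling out pathologies such as accumulation of impacts), which is substantially harder than the direct argument; otherwise, replace the LaSalle step by the Barbalat-plus-integrated-inclusion argument and use MFCQ to bound the multiplier measure. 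Your stability argument for isolated minima matches the paper's, but its attractiveness part inherits the same gap. Note also that you add an assumption (bounded sublevel sets of $f$ on $C$) to get precompactness; the paper instead invokes compactness of $C$ at that point, so some such hypothesis is needed in either treatment.
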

\begin{proof}
From the analysis of the smooth motion in Sec.~\ref{Sec:AGF}, we infer that $\gamma_i(x(t),u(t))\geq 0$ for all $t$ such that $i\in I_{x(t)}$ (a.e.). By Gr\"{o}nwall's inequality, this implies $g(x(t))\geq 0$ for all $t\geq 0$ and therefore $x(t) \in C$ for all $t\in [0,\infty)$.

We consider the following Lyapunov function
\begin{equation}
\hchange{V(t)}=\frac{1}{2} |u(t)|^2 + f(x(t)),
\end{equation}
which is bounded below by assumption. We investigate how \hchange{$V$} evolves along the trajectories of \eqref{eq:mde1}. The differential measure corresponding to \hchange{$V$} can be expressed as \citep{Leine}:
\begin{align*}
\diff V &= \frac{1}{2} (u^+ + u^-)\T \diff u + \nabla f(x)\T u ~\dt \\
&= -2 \delta |u|^2 ~\dt - (\nabla f(x+\beta u)-\nabla f(x))\T u ~\dt +\sum_{i\in I_{x}} \left(\frac{1}{2} (\gamma_i^+ + \gamma_i^-) - \alpha \hchange{g_i(x)}\right) \diff \lambda_i.
\end{align*}
The second line follows from replacing $\diff u$ with \eqref{eq:mde1}, using the fact that the Lebesgue measure captures only the smooth motion, which means, for example, $(u^+)\T u~\dt=(u^-)\T u~\dt=|u|^2 \dt$, and adding and subtracting $\alpha g_i\diff \lambda_i$. From the assumption $2\delta - \beta >0$ (or alternatively by convexity of $f$) we can upper bound $\diff V$ by
\begin{equation*}
\diff V \leq -c_2 |u|^2 \dt + \sum_{i\in I_x} \left( \frac{1}{2}(\gamma_i^+ +\gamma_i^-) - \alpha \hchange{g_i(x)} \right) \diff \lambda_i,
\end{equation*}
where $c_2>0$ is constant. The summand in the second part of the expression can be rewritten as
\begin{equation*}
-\alpha \hchange{g_i(x)} \diff \lambda_i + \frac{1}{2} (\gamma_i^+ + \epsilon \gamma_i^-) \diff \lambda_i + \frac{1-\epsilon}{2} (\gamma_i^-) \diff \lambda_i.
\end{equation*}
The fact that $g_i(x(t))=0$ for all $i\in I_{x(t)}$ ($x(t)$ remains feasible) implies that the first term vanishes. The second term vanishes due to the complementarity condition in \eqref{eq:mde1}. The third term is guaranteed to be non-positive, since, on the one hand, $\diff \lambda_i\geq 0$ (see \eqref{eq:mde1}), and on the other hand, $\gamma_i^-\leq 0$ in case of impact, and $\gamma_i^-~\diff \lambda_i=\gamma_i \diff \lambda_i=0$ in case of smooth motion (see again \eqref{eq:mde1}). We therefore conclude that
\begin{equation}
\diff V \leq -c_2 |u|^2 \dt, \label{eq:decrease}
\end{equation}
which means that $V(x(t),u(t))$ is monotonically decreasing in $t$. 

For proving convergence to stationary points, we note that since $u(t)$ is of locally bounded variation with no singular part (by assumption), it can be decomposed in an absolutely continuous function and a piecewise constant function \citep[Lebesgue decomposition, see for example Ch.~3 of][]{Leine}. As a result, $\dot{u}(t)$ (whenever it exists) is uniformly locally integrable \citep[see][]{Teel}. Combined with the fact that $u$ is square integrable, which follows from \eqref{eq:decrease} and the observation that $V$ is bounded below, we conclude that $u(t)\rightarrow 0$ as $t\rightarrow \infty$ by a variant of Barbalat's lemma \citep[see][]{Teel}. 
%We also note that $\diff V\leq 0$, which means that $|u(t)^+|^2 \leq |u(t)^-|^2$, whenever $u(t)^+\neq u(t)^-$. As a result we conclude that $u$ is bounded uniformly for all $t\in [0,\infty)$. 
We recall that $x(t)$ is bounded ($C$ is compact) and consider any cluster point $\bar{x}$ of $x(t)$, which means that there exists a sequence $t_k \rightarrow \infty$ such that $x(t_k) \rightarrow \bar{x}$. We pick any $\tau > 0$ and consider
\begin{equation*}
u(t_k+\tau)^+-u(t_k)^-=\int_{t_k}^{t_k+\tau} \diff u =  \int_{t_k}^{t_k+\tau} - 2\delta u - \nabla f(x+\beta u) \dt +\sum_{i=1}^{n_\text{g}} \int_{t_k}^{t_k+\tau} \nabla g_i(x) \diff \lambda_i,
\end{equation*}
where we set $\diff \lambda_i = 0$ whenever $i\not \in I_{x(t)}$ to simplify notation. The previous expression is guaranteed to vanish for $k\rightarrow \infty$, which by continuity of $\nabla g_i$ and $\nabla f$ means that
\begin{equation}
-\nabla f(\bar{x}) \tau + \sum_{i=1}^{n_\text{g}} \nabla g_i(\bar{x}) \int_{t_k}^{t_k+\tau} \diff \lambda_i \rightarrow 0 \label{eq:proofConv}
\end{equation}
as $k\rightarrow \infty$. We note that by continuity of $g$, there exists a constant $k_0>0$ and $c_\text{lg}>0$ such that $g_i(x(t_k))>c_\text{lg}$ for all $k>k_0$ and all $i\not\in I_{\bar{x}}$. From the fact that $u(t)\rightarrow 0$ we infer that for large enough $k$, $g_i(x(t))>c_\text{lg}/2$ for all $t\in [t_k,t_k+\tau]$ and $i\not\in I_{\bar{x}}$. We introduce the notation 
\begin{equation*}
\int_{t_k}^{t_k+\tau} \diff \lambda_i =: \lambda_k^i \tau,
\end{equation*}
for all $k>0$ and all $i\in [n_\text{g}]$ and conclude that $\lambda_k^i \geq 0$ and, for all $k$ large enough, $\lambda_k^i=0$ if $i\not\in I_{\bar{x}}$. 

We argue next that $\lambda_k^i$ is bounded for all $i\in [n_\text{g}]$. We argue by contradiction and consider the sequence $(\lambda_k^1,\dots,\lambda_k^{n_\text{g}})/|(\lambda_k^1,\dots,\lambda_k^{n_\text{g}})|$, which is guaranteed to be bounded, even though $|(\lambda_k^1,\dots,\lambda_k^{n_\text{g}})|\rightarrow \infty$. Upon dividing \eqref{eq:proofConv} by $|(\lambda_k^1,\dots,\lambda_k^{n_\text{g}})|$ we conclude that 
\begin{equation*}
\sum_{i=1}^{n_\text{g}} \nabla g_i(\bar{x}) \xi_i = 0,
\end{equation*}
where $\xi$ denotes an accumulation point of $(\lambda_k^1,\dots,\lambda_k^{n_\text{g}})/|(\lambda_k^1,\dots,\lambda_k^{n_\text{g}})|$, which satisfies $\xi_i \geq 0$, $\xi_i=0$ for all $i\not\in I_{\bar{x}}$, and $|\xi|=1$. However, the fact that $\xi\neq 0$ contradicts the constraint qualification, since it precludes the existence of a vector $w$ such that $w\T \nabla g_i(\bar{x})>0$ for all $i\in I_{\bar{x}}$ (which would mean $\sum_{i=1}^{n_\text{g}} w\T \nabla g_i(\bar{x}) \xi_i>0$). This implies that $\lambda_k^i$ is bounded for all $i\in [n_\text{g}]$.

We take any accumulation point of $(\lambda_k^1,\dots,\lambda_k^{n_\text{g}})$, which we denote by $\bar{\lambda}$. The accumulation point $\bar{\lambda}$ satisfies $\bar{\lambda}\geq 0$, $\bar{\lambda}_i=0$ for all $i\not \in I_{\bar{x}}$ (complementary slackness), and by \eqref{eq:proofConv}, $-\nabla f(\bar{x}) + \sum_{i=1}^{n_\text{g}} \nabla g_i(\bar{x}) \bar{\lambda}_i = 0$. Hence, $\bar{x}$ and $\bar{\lambda}$ satisfy the Karush-Kuhn-Tucker conditions of \eqref{eq:fundProb}, and $\bar{x}$ is stationary.

We conclude the proof by showing asymptotic stability of isolated local minima (in the sense of Lyapunov). We accordingly pick any isolated local minimum $x^*$ and note that the function $|u|^2/2 +f(x)-f(x^*)$ is positive definite in a neighborhood of $(x^*,0)$. We conclude from \eqref{eq:decrease} that $x^*$ is therefore stable in the sense of Lyapunov. We have already shown attractiveness (see previous paragraphs) and therefore conclude that $x^*$ is asymptotically stable in the sense of Lyapunov.
\end{proof}
\begin{table}
\caption{The table summarizes convergence rates that arise from different choices of $\alpha$, $\beta$, and $\delta$. Without loss of generality $f$ is normalized, such that the Lipschitz constant of the gradient is unity.}
\begin{tabular}{l|c|c|c|c}
variant & $\alpha$ & $\delta$ & $\beta$ & rate $\rho$ \\ \hline
heavy ball& $\sqrt{\mu}$ & $\sqrt{\mu}$ & $0$ & $e^{-\sqrt{\mu} t}$\\
Nesterov constant parameters & $\sqrt{\mu}-\mu/2$ & $\frac{\sqrt{\mu}}{1+\sqrt{\mu}}$ & $\frac{1-\sqrt{\mu}}{1+\sqrt{\mu}}$ & $e^{-(\sqrt{\mu}-\mu/2)t}$ \\
Nesterov varying parameters & $\frac{2}{t+3}$ & $\frac{3}{2(t+3)}$ & $\frac{t}{t+3}$ & $\frac{9}{(t+3)^2}$
\end{tabular}
\label{Tab:params}
\end{table}
%
%
%The first variant, HB, refers to the case where $\beta=0$, which amounts to a generalization of the Heavy-Ball method from the unconstrained case to the constrained case. Similarly, Ncp (Nesterov - constant parameters), and Nvp (Nesterov - varying parameters), denotes the corresponding generalization of Nesterov's accelerated gradient scheme. The constant step schemes are suitable for strongly convex objective functions and yield exponential rates that scale with $\sqrt{\mu}$, whereas the algorithm with variable parameters applies also to non-strongly convex functions, where it achieves a sublinear rate of the order $\mathcal{O}(1/t^2)$.
%
%The importance of the following results lie in the fact that it directly translates the results from the unconstrained case (according to \eqref{eq:diff1}) to the constrained case (according to \eqref{eq:mde1}). In contrast to existing projected gradient schemes, the algorithm \eqref{eq:mde1} does not rely on projections or optimizations over the entire feasible set, and only includes a sparse, convex, approximation thereof (see \eqref{eq:dis1}). 
%
%The proof of Thm.~\ref{Thm:stability} is included in App.~\ref{App:ProofStability}.

The following theorem demonstrates that the use of momentum combined with well-chosen damping parameters indeed results in accelerated convergence rates ($\mathcal{O}(1/t^2)$ in the smooth and convex case, and $e^{-\sqrt{\mu} t}$ in the smooth and strongly convex case).\footnote{Continuous-time rates are indeed meaningful in this context, since both the Lipschitz constant of $\nabla f$ and the constant in front of $\nabla f$ in \eqref{eq:mde1} are fixed to unity. This fixes the time scale, as any reparametrization of time, i.e., $t=\tau(s)$, where $\tau:\mathbb{R}_{\geq 0} \rightarrow \mathbb{R}_{\geq 0}$ is a diffeomorphism, would alter the way $\nabla f$ enters \eqref{eq:mde1}. We refer the reader to \citet{ourWork3} for a more general discussion.}
\begin{theorem}\label{Thm:ConvRate}
Let $C$ be convex and $f$ be $1$-smooth and either convex or strongly convex with strong convexity constant $\mu>0$. Let the parameters $\alpha$, $\beta$, $\delta$, and $\rho$ be chosen according to Table~\ref{Tab:params} and assume that Slater's condition holds. Then, for any $x(0)\in \mathbb{R}^{n}$, $u(0)=0$, the following holds:
\begin{multline*}
\min\{ 0, g(x(0))\} \T \lambda^* \rho(t) \leq \\ f(x(t))-f(x^*)
\leq \left( \frac{\alpha^2}{2} |x(0)-x^*|^2 + f(x(0))-f(x^*)\right) \rho(t),
\end{multline*}
where $x^*$ is a minimizer of \eqref{eq:fundProb} and $\lambda^*$ is a multiplier that satisfies the Karush-Kuhn-Tucker conditions.
\end{theorem}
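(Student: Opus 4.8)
\emph{Lyapunov function.} The plan is to exhibit a single energy functional covering all three rows of Table~\ref{Tab:params} and to show that its associated differential measure is non-positive, so that it decreases both along the smooth portions of a trajectory and across impacts. Writing $\alpha=\alpha(t)$ and $\rho=\rho(t)$ for the (possibly time-varying) quantities in the table, I would work with
$$W(t)=\frac{1}{\rho(t)}\Bigl(f(x(t))-f(x^*)+\tfrac12\bigl|\alpha(t)(x(t)-x^*)+u(t)\bigr|^2\Bigr),$$
using that $\rho$ is non-increasing with $\rho(0)=1$, that $\exp(-\int_0^t\alpha(s)\,\mathrm{d}s)=\rho(t)$ for each row, and that $W(0)=f(x(0))-f(x^*)+\tfrac{\alpha^2}{2}|x(0)-x^*|^2$ because $u(0)=0$. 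Once $\mathrm{d}W\le 0$ is established as a measure, the upper bound is immediate from $f(x(t))-f(x^*)\le \rho(t)\,W(t)\le \rho(t)\,W(0)$.

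\emph{Smooth motion.} On an interval of smooth motion \eqref{eq:mde1} reduces to \eqref{eq:smoothpart}, i.e.\ $\dot u=-2\delta u-\nabla f(x+\beta u)+R$ with $R=\sum_{i\in I_x}\nabla g_i(x)\lambda_i$. Differentiating $W$ and substituting, the terms that do not involve $R$ reproduce exactly the Lyapunov computation for the unconstrained system \eqref{eq:diff1}; this is where $1$-smoothness of $f$, (strong) convexity of $f$, and the precise choices of $\alpha,\beta,\delta$ enter, and this part can be carried out as in the unconstrained analysis of \citet{ourWork} (it is precisely what dictates the parameter choices in Table~\ref{Tab:params}), yielding a non-positive contribution. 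The only new term is $\rho^{-1}\langle\alpha(x-x^*)+u,\,R\rangle$, which I would show is $\le 0$: complementarity $\lambda_i\gamma_i=0$ with $\gamma_i=\nabla g_i(x)\T u+\alpha g_i(x)$ gives $\langle u,R\rangle=-\alpha\sum_i\lambda_i g_i(x)$; concavity of $g$ together with feasibility $g_i(x^*)\ge0$ gives $\nabla g_i(x)\T(x-x^*)\le g_i(x)-g_i(x^*)\le g_i(x)$, hence $\langle x-x^*,R\rangle\le\sum_i\lambda_i g_i(x)$ since $\lambda_i\ge0$; adding $\alpha$ times the latter inequality to the former yields $\langle\alpha(x-x^*)+u,R\rangle\le 0$. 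Thus the Lebesgue density of $\mathrm{d}W$ is non-positive.

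\emph{Impacts.} At a discontinuity time, $x(t)$ is unchanged while $u$ jumps from $u^-$ to $u^+$, and by \eqref{eq:statcond333} (taking $\epsilon=0$) $u^+$ is the Euclidean projection of $u^-$ onto $V_\alpha(x(t))$. The point $w:=-\alpha(x(t)-x^*)$ belongs to $V_\alpha(x(t))$, since $\gamma_i(x(t),w)=\alpha\bigl(g_i(x(t))-\nabla g_i(x(t))\T(x(t)-x^*)\bigr)\ge\alpha g_i(x^*)\ge0$ by concavity of $g$ and feasibility of $x^*$. Non-expansiveness of the projection then gives $|\alpha(x-x^*)+u^+|=|u^+-w|\le|u^--w|=|\alpha(x-x^*)+u^-|$, so the quadratic part of $W$ does not increase while $f(x(t))$ stays the same; hence the atomic part of $\mathrm{d}W$ is non-positive as well. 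Combining the two parts, $W$ is non-increasing (and any impact at $t=0$ only lowers it below $W(0)$), which establishes the upper bound.

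\emph{Lower bound and main obstacle.} For the lower bound I would use weak duality: since $\lambda^*\ge0$ satisfies the Karush--Kuhn--Tucker conditions and $f$ is convex, $g$ concave, the Lagrangian $L(\cdot,\lambda^*)=f-\langle\lambda^*,g\rangle$ is convex and minimized at $x^*$, so $f(x(t))-\langle\lambda^*,g(x(t))\rangle\ge L(x^*,\lambda^*)=f(x^*)$ by complementarity, i.e.\ $f(x(t))-f(x^*)\ge\langle\lambda^*,g(x(t))\rangle\ge\langle\lambda^*,\min\{0,g(x(t))\}\rangle$. The Gr\"onwall estimate \eqref{eq:groenwall} (which holds verbatim for time-varying $\alpha$ with $e^{-\alpha t}$ replaced by $\rho(t)=\exp(-\int_0^t\alpha)$) gives $\min\{0,g_i(x(t))\}\ge\rho(t)\min\{0,g_i(x(0))\}$ componentwise, and since $\lambda^*\ge0$ this yields $f(x(t))-f(x^*)\ge\rho(t)\,\langle\lambda^*,\min\{0,g(x(0))\}\rangle$, as claimed. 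The main obstacle is the rigorous bookkeeping at the level of differential measures — verifying that $\mathrm{d}W$ has no singular part and that both its Lebesgue and atomic densities are non-positive — together with the conceptual point that the constraint force $R$ in the smooth regime and the velocity reset at impacts both pull $u$ toward the \emph{same} admissible point $-\alpha(x-x^*)\in V_\alpha(x)$, which is exactly where concavity of $g$ and feasibility of $x^*$ are used; a secondary subtlety is that $x(0)$ need not be feasible, so $x(t)$ may temporarily leave $C$, and it is the estimate \eqref{eq:groenwall} that keeps this excursion controlled and feeds the lower bound.
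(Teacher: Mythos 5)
Your upper-bound argument is essentially the paper's: up to the rescaling by $1/\rho(t)$, your $W$ is exactly the paper's Lyapunov function $\tfrac12|\alpha(x-x^*)+u|^2+f(x)-f(x^*)$ (showing $\mathrm{d}W\le 0$ is the same as the paper's $\mathrm{d}V\le-\alpha V\,\dt$), and your sign argument for the constraint force (complementarity for $u\T R$, concavity of $g$ plus $g(x^*)\ge0$ for $(x-x^*)\T R$) is the same cancellation the paper performs on the summand $(\alpha\nabla g_i(x)\T(x-x^*)-\alpha g_i(x))\,\diff\lambda_i$. Two differences are worth noting. First, the paper never splits smooth motion from impacts: it works directly with the differential measure and the decomposition $\tfrac12(\gamma_i^++\gamma_i^-)-\alpha g_i=\tfrac12(\gamma_i^++\epsilon\gamma_i^-)+\tfrac{1-\epsilon}{2}\gamma_i^--\alpha g_i$, which makes the estimate valid for every restitution coefficient $\epsilon\in[0,1)$. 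Your impact step instead invokes \eqref{eq:statcond333} with $\epsilon=0$; for $\epsilon>0$ the admissible set at an impact is $\{v:\gamma_i(x,v)\ge-\epsilon\gamma_i(x,u^-)\}$, which need not contain $-\alpha(x-x^*)$, so the projection/non-expansiveness argument does not carry over verbatim and you would need the measure-theoretic (Newton impact-law) argument to cover general $\epsilon$ — this is the one place where your proof is strictly narrower than the paper's. You also defer the unconstrained dissipation computation (where $1$-smoothness, strong convexity, Young's inequality, and the specific Table~\ref{Tab:params} choices — including the extra $\dot{\alpha}$ terms in the time-varying row — must be verified) to prior work, whereas the paper carries it out case by case; acceptable as an outline, but that is where most of the actual checking lives. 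Second, your lower bound is a genuinely different and more elementary route: you apply the Lagrangian/weak-duality inequality $f(x(t))-f(x^*)\ge{\lambda^*}\T g(x(t))$ directly, combined with the Gr\"onwall estimate \eqref{eq:groenwall}, whereas the paper introduces the perturbed problem $f^*(t)=\min\{f(\xi):g(\xi)\ge\min\{0,g(x(0))\}e^{-\alpha t}\}$ and quotes the sensitivity interpretation of $\lambda^*$; the two arguments yield the same bound, and yours is self-contained.
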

\begin{proof}
\paragraph{i) Heavy ball ($\alpha=\sqrt{\mu}$):}
We start by analyzing the heavy ball variant ($\alpha=\sqrt{\mu}$) and consider the evolution of the function
\begin{equation*}
\hchange{W}(t)=\frac{1}{2} |\alpha (x(t)-x^*)+u(t)|^2 + f(x(t))-f(x^*),
\end{equation*}
along the trajectories $(x(t),u(t))$. The corresponding differential measure $\diff W$ is given by \citep{Leine}:
\begin{equation*}
(\alpha (x-x^*)+ (u^+ + u^-)/2 )\T (\alpha u ~\dt + \diff u) + \nabla f(x)\T u ~\dt.
\end{equation*}
By following the same steps as in the proof of Thm.~\ref{Thm:stability} we obtain
\begin{equation*}
(u^+ + u^-)/2\T  \diff u + \nabla f(x)\T u ~\dt \leq 
 - 2\delta |u|^2~ \dt
+ \sum_{i\in I_x} -\alpha g_i(x) \diff \lambda_i.
\end{equation*}
However, in contrast to Thm.~\ref{Thm:stability}, we allow also for infeasible initial conditions, and therefore the term $-\alpha g_i(x) \diff \lambda_i$ remains. This yields the following upper bound on $\diff W$:
\begin{multline}
\diff W \leq -(2\delta -\alpha) |u|^2 \dt - \alpha (x-x^*)\T \nabla f(x) \dt 
+(\alpha^2-2\alpha \delta) (x-x^*)\T u \dt \\
+ \sum_{i\in I_x} (\alpha \nabla g_i(x)\T (x-x^*) - \alpha g_i(x)) \diff \lambda_i. \label{eq:sumtmp3}
\end{multline}
The fact that $f$ is strongly convex means that the following holds: 
\begin{align*}
-(x-x^*)\T \nabla f(x) \leq &-(f(x)-f(x^*))-\frac{\mu}{2} |x-x^*|^2.
\end{align*}
In addition, each $g_i$ is concave, and therefore
\begin{equation*}
\nabla g_i(x)\T (x^*-x)\geq g_i(x^*) - g_i(x)\geq -g_i(x),
\end{equation*}
where we used the fact that $x^*$ is feasible for the last inequality. This concludes that the summand in \eqref{eq:sumtmp3} is non-positive
%\begin{equation*}
%\left( \alpha \nabla g_i(x)\T (x-x^*) - \alpha g_i(x)\right) \diff \lambda_i \leq 0,
%\end{equation*}
since $\diff \lambda_i\geq 0$. Thus, after some elementary manipulations, we obtain the following upper bound on $\diff W$:
\begin{align*}
\diff W \leq &-\alpha W \dt -(2\delta -3 \alpha/2) |u|^2 \dt \leq -\alpha W \dt.
\end{align*}
Applying Gr\"{o}nwall's inequality then implies %that 
%\begin{equation*}
%W(x(t),u(t))\leq W(x(0),0) e^{-\alpha t}, \quad \forall t\geq 0,
%\end{equation*}
%which implies 
the desired upper bound on $f(x(t))-f(x^*)$. 

The lower bound is obtained from a perturbation analysis, using an argument similar to \citet{ownWorkC}. We define
\begin{equation*}
f^*(t):=\min_{\xi\in \mathbb{R}^n} f(\xi), \quad \text{s.t.} \quad g(\xi)\geq \min\{0,g(x(0))\} e^{-\alpha t},
\end{equation*} 
which is of the form \eqref{eq:fundProb}, but with a modified right-hand side of the constraints. The trajectory $x(t)$ satisfies $g(x(t))\geq \min\{0,g(x(0))\} e^{-\alpha t}$ and is therefore a feasible candidate for the above minimization, which implies $f(x(t))\geq f^*(t)$. The minimum is clearly attained, since $f$ is bounded below, $f(x)\rightarrow \infty$ for $|x|\rightarrow \infty$, and the modified set of feasible points is closed. The multiplier $\lambda^*$ satisfying the Karush-Kuhn-Tucker conditions of \eqref{eq:fundProb} captures the sensitivity of the cost function with respect to perturbations of the constraints. This means that $\lambda^*$ is guaranteed to satisfy the following inequality \citep[see, e.g.,][p.~277]{RockafellarConvex}:
\begin{equation*}
f^*(t)-f^* \geq \min\{ 0, g(x(0))\}\T \lambda^* e^{-\alpha t},
\end{equation*}
which combined with $f(x(t))\geq f^*(t)$ implies the desired lower bound. %The remaining cases listed in Tab.~\ref{Tab:params} follow from similar arguments; the details are included in App.~\ref{App:ProofVariants}.

\paragraph{ii) Nesterov - constant parameters ($\alpha=\sqrt{\mu}-\mu/2$):}
We consider again the evolution of the function $W(t)$ (see i)) along the trajectories $(x(t),u(t))$. The corresponding differential measure is given by
\begin{equation*}
\diff W= (\hchange{\alpha} (x-x^*)+ (u^+ + u^-)/2 )\T \\ (\hchange{\alpha} u \dt + \diff u) + \nabla f(x)\T u \dt.
\end{equation*}
From the proof of Thm.~\ref{Thm:stability} we obtain
\begin{multline}
(u^+ + u^-)/2\T  \diff u + \nabla f(x) u \dt \leq -(2\delta +\mu \beta) |u|^2 \dt
 \\
 + u\T (\nabla f(x+\beta u)-\nabla f(x))\dt + \sum_{i\in I_x} -\alpha g_i \diff \lambda_i.
\end{multline}
Including the remaining terms yields the following upper bound on $\diff W$:
\begin{multline}
-(2\delta -\hchange{\alpha}) |u|^2 \dt + (\hchange{\alpha}^2 - 2\delta a) (x-x^*)\T u \dt 
- (\nabla f(y) - \nabla f(x))\T u \dt - \hchange{\alpha} (x-x^*)\T \nabla f(y) \dt \\
+ \sum_{i\in I_x} (a \nabla g_i(x)\T (x-x^*) - \alpha g_i) \diff \lambda_i, \label{eq:sumtmp}
\end{multline}
where we introduced the variable $y:=x+\beta u$ to simplify notation. The fact that $f$ is strongly convex means that the following holds:
\begin{multline}
-(x-x^*)\T \nabla f(y) \leq -(f(x)-f(x^*)) -\beta (\nabla f(x)-\nabla f(y))\T u 
- \mu \beta^2 |u|^2 \\
- \mu \beta u\T (x-x^*) -\frac{\mu}{2} |x-x^*|^2. \label{eq:tmptmp}
\end{multline}
In addition, $C$ is convex, which implies that each $g_i$ is concave. As a result,
\begin{equation*}
\nabla g_i(x)\T (x^*-x)\geq g_i(x^*) - g_i(x)\geq -g_i(x),
\end{equation*}
where we used the fact that $x^*$ is feasible for the last inequality. The summand in \eqref{eq:sumtmp} can therefore be upper bounded by
\begin{equation*}
\left( \hchange{\alpha} \nabla g_i(x)\T (x-x^*) - \alpha g_i(x)\right) \diff \lambda_i \leq 0,
\end{equation*}
since, by definition $\diff \lambda_i\geq 0$. Combined with \eqref{eq:tmptmp}, and after some elementary manipulations, this yields the following upper bound on $\diff W$:
\begin{multline*}
\diff W \leq -\hchange{\alpha} W \dt -(2\delta -3\hchange{\alpha}/2 +\mu \beta ) |u|^2 \dt 
- \frac{1}{2} (\hchange{\alpha} \mu - \hchange{\alpha}^3) |x-x^*|^2 \dt \\
+ (2 \hchange{\alpha}^2 - 2 \delta \hchange{\alpha} - \hchange{\alpha} \mu \beta) (x-x^*)\T u \dt,
\end{multline*}
where we have used the fact that $1-\beta \hchange{\alpha}\geq 0$. We note that the term $2\hchange{\alpha}^2-2\delta \hchange{\alpha}-\hchange{\alpha}\mu\beta$ vanishes, that $\hchange{\alpha}(\mu-\hchange{\alpha}^2)\geq 0$, and $2\delta -3\hchange{\alpha}/2+\mu \beta \geq 0$ for all $\mu \in [0,1]$. We therefore obtain $\diff W\leq -\hchange{\alpha} W \dt$, which, by Gr\"{o}nwall's inequality, implies the desired result.\\

\paragraph{iii) Nesterov - varying parameters ($\hchange{\alpha(t)}=2/(t+3)$):}
The proof follows the same steps. We consider again the evolution of the function
\begin{equation*}
\hchange{W(t)}=\frac{1}{2} |\hchange{\alpha(t)}(x(t)-x^*) + u(t)|^2+f(x(t))-f(x^*),
\end{equation*}
along the trajectories $(x(t),u(t))$, \hchange{where $\alpha(t)$ is now time-varying, and where we will again omit the dependence with respect to $t$ to simplify notation}. The differential measure $\diff \hchange{W}$ is given by
\begin{equation*}
\diff \hchange{W}= (\hchange{\alpha} (x-x^*)+ (u^+ + u^-)/2 )\T (\hchange{\alpha} u \dt + \hchange{\dot{\alpha}} (x-x^*) \dt + \diff u) + \nabla f(x)\T u \dt,
\end{equation*}
where according to the chain rule the derivative of $\hchange{\alpha}$ with respect to $t$ enters. This leads to two additional quadratic terms of the type 
\begin{equation*}
\hchange{\alpha} \hchange{\dot{\alpha}} |x-x^*|^2 \dt \quad\text{and}\quad \hchange{\dot{\alpha}} u\T (x-x^*)\dt.
\end{equation*}
As a result, by following the same steps as in the variant ii) (see previous section), we obtain
\begin{multline*}
\diff \hchange{W}\leq -\hchange{\alpha} \hchange{W} \dt -(2\delta -3\hchange{\alpha}/2 +\mu \beta ) |u|^2 \dt 
- \frac{1}{2} (\hchange{\alpha} \mu - \hchange{\alpha}^3 - 2\hchange{\alpha} \hchange{\dot{\alpha}}) |x-x^*|^2 \dt 
\\
+ (2 \hchange{\alpha}^2 - 2 \delta \hchange{\alpha} - \hchange{\alpha} \mu \beta+\hchange{\dot{\alpha}}) (x-x^*)\T u \dt.
\end{multline*}
We note that $2\hchange{\alpha}^2-2\delta \hchange{\alpha} + \hchange{\dot{\alpha}}$ vanishes. The same applies to $2\delta-3\hchange{\alpha}/2$ and $\hchange{\alpha}^3-2\hchange{\alpha}\hchange{\dot{\alpha}}$, which simplifies the above inequality to
\begin{align*}
\diff \hchange{W}\leq -\hchange{\alpha} \hchange{W} \dt -\mu \beta |u|^2 \dt - \frac{1}{2} \hchange{\alpha} \mu  |x-x^*|^2 \dt - \hchange{\alpha} \mu \beta (x-x^*)\T u \dt.
\end{align*}
Applying Young's inequality to the cross-term $(x-x^*)\T u$ concludes that $\diff \hchange{W} \leq -\hchange{\alpha} \hchange{W} \dt$ for any $\mu\in [0,1]$. We finally apply Gr\"{o}nwall's inequality, which yields
\begin{align*}
\hchange{W(t)} \leq \hchange{W(0)} \exp\left(-\int_{0}^{t} \hchange{\alpha}(s) \diff s\right)=\hchange{W(0)} \frac{9}{(t+3)^2},
\end{align*}
and concludes the proof.
\end{proof}
We also demonstrate convergence of the discrete algorithm \eqref{eq:dis1} in a nonconvex and possibly stochastic setting. For simplicity we state and prove the deterministic result when $\epsilon=0$ (as becomes apparent from the proof, the stochastic case with bounded zero-mean gradient perturbations follows from the same arguments). We further note that the restriction $1/2<s<1$ can be loosened to $1/2<s\leq 1$ if additional assumptions on the damping parameters $\delta$ and $\beta$ are satisfied (this requires a slightly more detailed proof).
%In order to simplify the exposition we require the following assumptions and modifications of \eqref{eq:dis1}:
%\begin{itemize}
%\item Let $g_{n_\text{g}}$ be chosen as follows: $g_{n_\text{g}}(x)=c_0-|x|^2$, where $c_0>0$ is constant. (We note that since $C$ is compact such a constraint with a large enough $c_0$ can always be added.) 
%\item The update $x_{k+1}=x_k+T_k u_k$ in \eqref{eq:dis1} is slightly modified to $x_{k+1}=\sqrt{2 c_0} (x_k+T_k u_k)/|x_k+T_k u_k|$ whenever $|x_k+T_k u_k|> \sqrt{2 c_0}$. (This ensures that $x_k$ remains bounded and we will see that for small enough $T_k$, $|x_k+T_k u_k| > \sqrt{2 c_0}$ never occurs.)
%\item Let $f$ be analytic on $|x|\leq \sqrt{2c_0}$, that is $f$ is infinitely differentiable on $|x|\leq \sqrt{2 c_0}$ and its Taylor series at each point in $|x|\leq \sqrt{2 c_0}$ converges pointwise.
%\end{itemize}
\begin{theorem}\label{Thm:ConvDT}
Let $T_k=T_0/k^s$, $k=1,2,\dots$, for some $T_0>0$ and $s\in (1/2,1)$, and let the function $\min\{0,g_1(x)\}$ have compact level sets. Let $f$ be 1-smooth and either convex or such that $2\delta -\beta >0$, let $x_k,u_k$ be the iterates defined in \eqref{eq:dis1} with $\epsilon=0$ and the initial values $(x_0,u_0)\in \mathbb{R}^{2n}$ arbitrary, and let $g$ satisfy the Mangasarian-Fromovitz constraint qualification. If $u_k$ is bounded and $f$ has isolated stationary points, then $x_k$ converges to a  stationary point of \eqref{eq:fundProb}, while $u_k$ converges to zero.
\end{theorem}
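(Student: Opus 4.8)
The plan is to treat \eqref{eq:dis1} (with $\epsilon=0$) as a perturbed discretization of the measure-differential inclusion \eqref{eq:mde1} and to transport the Lyapunov/LaSalle argument behind Theorem~\ref{Thm:stability} to the iteration, using the augmented mechanical energy $V_k:=f(x_k)+\tfrac12|u_k|^2+c_0\sum_i|\min\{0,g_i(x_k)\}|$ with a suitable fixed $c_0>0$. Two preliminaries are needed. First, $x_k$ stays in a fixed compact set: expanding $g_i(x_{k+1})=g_i(x_k+T_ku_{k+1})$, using $1$-smoothness of $g$ and the active-constraint inequality $\gamma_i(x_k,u_{k+1})\ge0$ (valid since $\epsilon=0$) gives the discrete Gr\"onwall bound $\min\{0,g_i(x_{k+1})\}\ge(1-\alpha T_k)\min\{0,g_i(x_k)\}-CT_k^2$, which together with $\sum_kT_k^2<\infty$ forces $r_k:=\max_i|\min\{0,g_i(x_k)\}|\to0$; combined with the compact level sets of $\min\{0,g_1\}$ and the assumed boundedness of $u_k$, this confines $x_k$ to a compact set on which $\nabla f$, the $g_i$ and $\nabla g_i$ are bounded and MFCQ holds uniformly, so by Remark~\ref{rem:nonempty} each $u_{k+1}$ is well defined. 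Second, \eqref{eq:dis1} reads $u_{k+1}=\Pi_{V_\alpha(x_k)}(\hat u_k)$ with $\hat u_k:=(1-2\delta T_k)u_k-T_k\nabla f(x_k+\beta u_k)$, and since $0$ lies within $O(r_k)$ of $V_\alpha(x_k)$ one has $|u_{k+1}|\le|\hat u_k|+O(r_k)\le|u_k|+O(T_k+r_k)$: impacts can never inflate $|u|$.

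The core is the estimate $V_{k+1}-V_k\le-(2\delta-\beta)\,T_k|u_{k+1}|^2+CT_k^2$ in the nonconvex case (with $2\delta-\beta$ replaced by $2\delta\ge0$ when $f$ is convex). I would combine the descent lemma $f(x_{k+1})\le f(x_k)+T_k\langle\nabla f(x_k),u_{k+1}\rangle+\tfrac{T_k^2}{2}|u_{k+1}|^2$ with the identity $\tfrac12|u_{k+1}|^2-\tfrac12|u_k|^2=\langle R_k,u_{k+1}\rangle-\tfrac12|R_k|^2-2\delta T_k|u_k|^2-T_k\langle\nabla f(x_k+\beta u_k),u_k\rangle+O(T_k^2)$, where $R_k:=u_{k+1}-\hat u_k=\sum_{i\in I_{x_k}}\nabla g_i(x_k)\Lambda_{ki}$, and with the bound $\langle\nabla f(x_k)-\nabla f(x_k+\beta u_k),u_k\rangle\le\beta|u_k|^2$ from $1$-smoothness (resp.\ $\le0$ from monotonicity of $\nabla f$ in the convex case) — this is exactly where the dichotomy ``$f$ convex or $2\delta-\beta>0$'' enters, mirroring the continuous-time computation. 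What remains is to control $\langle R_k,u_{k+1}\rangle-\tfrac12|R_k|^2$: the complementarity $\Lambda_{ki}\gamma_i(x_k,u_{k+1})=0$ gives $\langle R_k,u_{k+1}\rangle=-\alpha\sum_i\Lambda_{ki}g_i(x_k)=\alpha\sum_i\Lambda_{ki}|\min\{0,g_i(x_k)\}|\ge0$, and this sign-wrong term is absorbed by (i) the Gr\"onwall decay, which makes $c_0\sum_i|\min\{0,g_i(x_k)\}|$ telescope against it up to $O(T_k^2)$ on steps with no fresh activation, and (ii) on a step where a constraint newly activates, the crossing depth is $O(T_k)$ and the ensuing impact removes an outward velocity component of comparable size, so the term $-\tfrac12|R_k|^2$ pays for the jump in the violation part of $V_k$ once $c_0$ is fixed. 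Since $\delta>0$ is a fixed constant and $\sum_kT_k^2<\infty$, summing shows $V_k$ converges and $\sum_kT_k|u_{k+1}|^2<\infty$.

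Because $\sum_kT_k=\infty$ ($s<1$) this forces $\liminf_k|u_k|=0$, and I would upgrade to $u_k\to0$ by a standard up-crossing argument using $|u_{k+1}|\le|u_k|+O(T_k+r_k)$ with $T_k,r_k\to0$: any excursion of $|u_k|$ from below $\eta$ to above $2\eta$ must span a block of indices whose total step size is $\gtrsim\eta$, hence contributes $\gtrsim\eta^3$ to $\sum_kT_k|u_{k+1}|^2$, so only finitely many such excursions can occur. With $u_k\to0$ one also has $|x_{k+1}-x_k|=T_k|u_{k+1}|\to0$, so the set $\mathcal A$ of accumulation points of $(x_k)$ is compact and connected, and $r_k\to0$ gives $\mathcal A\subseteq C$.

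The last step — and the one I expect to be the main obstacle — is to show every $\bar x\in\mathcal A$ is a stationary (KKT) point of \eqref{eq:fundProb}; the isolation hypothesis then collapses the connected set $\mathcal A$ to $\{\bar x\}$, giving $x_k\to\bar x$. Along a subsequence $x_{k_j}\to\bar x$ with $u_{k_j}\to0$ one wants to pass to the limit in the optimality system of the projection defining $u_{k_j+1}$, i.e.\ in $\tfrac{u_{k_j+1}-u_{k_j}}{T_{k_j}}+2\delta u_{k_j}+\nabla f(x_{k_j}+\beta u_{k_j})=R_{k_j}/T_{k_j}=\sum_i(\Lambda_{k_ji}/T_{k_j})\nabla g_i(x_{k_j})$, with $\Lambda_{k_ji}\ge0$ and $\gamma_i(x_{k_j},u_{k_j+1})\ge0$ complementary, to obtain $-\nabla f(\bar x)=\sum_i\lambda_i\nabla g_i(\bar x)$ with $\lambda_i\ge0$ supported on $i$ with $g_i(\bar x)=0$. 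The difficulty is that this requires boundedness of the normalized constraint forces $R_{k_j}/T_{k_j}$ (equivalently of the multipliers $\Lambda_{k_ji}/T_{k_j}$, which then converge under MFCQ to a genuine multiplier), whereas $|R_{k_j}|=O(T_{k_j})$ fails on impact steps. I would resolve this either (a) by selecting the subsequence along steps where impacts are asymptotically negligible — arguing first that near $\bar x$ they must be, since otherwise the $-\tfrac12|R_k|^2$ terms would sum to $+\infty$ and contradict the convergence of $V_k$ — and then dividing by $T_{k_j}$; or (b) by showing that the interpolated process $(x_k,u_k)$ is an asymptotic pseudo-trajectory of \eqref{eq:mde1}, so that $\mathcal A\times\{0\}$ is internally chain transitive and, by the Lyapunov structure underlying Theorem~\ref{Thm:stability}, consists only of equilibria. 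Route (a) is the more self-contained; its genuine difficulty is the bookkeeping of fluctuating active sets and impulsive forces while estimating how much energy is dissipated between successive returns of $(x_k,u_k)$ to a neighborhood of a putative non-stationary point.
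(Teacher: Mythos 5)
Your overall architecture (energy descent with an impact-dissipation term, an $O(T_k)$ bound on constraint violation, MFCQ on a compact set, subsequence extraction, and isolated stationary points to collapse the accumulation set) is the same as the paper's, and replacing the paper's modified-Hamiltonian Lyapunov function (built by backward error analysis in Lem.~\ref{Lem:Lyap}) with the plain energy $f(x_k)+\tfrac12|u_k|^2$ is a defensible simplification given $\sum_k T_k^2<\infty$. However, two ingredients are missing, and the second is a genuine gap. First, to absorb the sign-wrong term $\langle R_k,u_{k+1}\rangle=\alpha\sum_{i\in I_{x_k}}\Lambda_{ki}|g_i(x_k)|$ you must bound the multipliers by the resultant force: the paper proves (Lemma~\ref{lemma:d2}) that MFCQ on the compact set containing the iterates gives $|\sum_i\nabla g_i(x)\lambda_i|\geq c_\lambda|\lambda|$ uniformly for $\lambda\geq0$ supported on $I_x$. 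Your heuristic that ``the impact removes an outward velocity component of comparable size'' is not a substitute: with nearly linearly dependent active gradients, $|\Lambda_k|$ can be arbitrarily large while $|R_k|$ stays small, and the uniform MFCQ bound is exactly what excludes this. With that lemma and the $O(T_k)$ violation bound of Lemma~\ref{Lemma:constr}, the bad term is at most $\alpha c_\text{g}n_\text{g}T_k|R_k|/c_\lambda$ and Young's inequality absorbs it into $-\tfrac12|R_k|^2+O(T_k^2)$; your extra penalty $c_0\sum_i|\min\{0,g_i(x_k)\}|$ and the activation-step bookkeeping then become unnecessary.

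The step you yourself flag as the main obstacle is indeed not closed by your sketch, and route (a) as stated does not work: summability of $|R_k|^2$ only yields $R_k\to0$, not boundedness of $R_k/T_k$ along a subsequence on which simultaneously $u_k\to0$ and $x_k$ converges. The paper's resolution is quantitative: from $V_{k+1}-V_k\leq -c_1T_k|u_k|^2-c_2|R_k|^2+c_3T_k^2-\alpha\sum_i\lambda_k^ig_i(x_k)$ and the finite budget $\sum_kT_k^2<\infty$, the constant $c_4$ is chosen so large that if $c_1T_k|u_k|^2+c_2|R_k|^2>c_4T_k^2$ held for all large $k$, the accumulated decrease would exceed $V_{k_0}-\inf_kV_k$; hence there is a subsequence with $|u_{k(j)}|^2\leq cT_{k(j)}$ \emph{and} $|R_{k(j)}|\leq cT_{k(j)}$ simultaneously, so $R_{k(j)}/T_{k(j)}$ converges and a dedicated normal-cone limit lemma (Lemma~\ref{Lemma:d3}, which uses $I_{x_{k(j)}}\subset I_{\bar x}$ for large $j$ and MFCQ to perturb test directions in $V_\alpha(\bar x)$) yields the KKT conditions at $\bar x$. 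Note also that the paper never proves that \emph{every} accumulation point is stationary, which is what your connectedness-plus-isolation plan requires and which would demand multiplier bounds along every convergent subsequence; instead it shows $V_k$ converges, so all accumulation points lie in one level set of $\tfrac12|u|^2+f$, and then connectedness of $\omega$, isolation of stationary points, and a separate interior/boundary case analysis collapse $\omega$ to $\{(\bar x,0)\}$. Until you supply the budget-type extraction of a subsequence with $|R_{k(j)}|=O(T_{k(j)})$ (or an equivalent device) together with the limit lemma, the convergence claim is not established.
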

 %Similarly, the assumption that $f$ has isolated stationary points is made for simplifying the presentation. The assumption that $u_k$ is bounded can be enforced by a simple reset strategy: if the newly computed $u_{k+1}$ exceeds a predefined threshold, we simply set $u_{k+1}=0$, $x_{k+1}=x_k$, and continue running the algorithm. This reset strategy reduces the total energy $|u_k|^2/2+f(x_k)$ from step $k$ to step $k+1$ by a fixed amount, which implies that the function $V_k(x_k,u_k)$ (which lies at the heart of the convergence analysis) is also reduced from $k$ to $k+1$ for large enough $k$; see App.~\ref{App:ConvDT}. Hence, the arguments used for showing convergence still apply.
\begin{proof}
The update \eqref{eq:dis1} can be divided into the following two steps
\begin{align}
\bar{x}_k&= x_k, \qquad  \qquad \qquad \qquad \qquad ~~~~ x_{k+1}=\bar{x}_k + u_{k+1} T_k, \nonumber \\
\bar{u}_k&= u_k - T_k f_\text{d}(x_k,u_k) + R_k, \qquad u_{k+1}=\bar{u}_k - T_k \nabla f(\bar{x}_k), \label{eq:decomp}
\end{align}
where $f_\text{d}(x_k,u_k):=2 \delta u_k + \nabla f(x_k + \beta u_k) - \nabla f(x_k)$ contains the dissipative terms and $R_k$ denotes the constraint forces. The first step, which maps $(x_k,u_k)$ to $(\bar{x}_k, \bar{u}_k)$ is an update of the velocity with the dissipative terms and the constraint forces $R_k$, whereas the second step, which maps $(\bar{x}_k, \bar{u}_k)$ to $(x_{k+1},u_{k+1})$ is a symplectic Euler discretization that captures the conservative parts of the underlying dynamics. In the following we will exploit the fact that the second step is a symplectic map. More precisely, the fact that the second step is symplectic implies that an energy function \hchange{$V_k: \mathbb{R}^n\times \mathbb{R}^n \rightarrow \mathbb{R}$} (bounded below, uniformly in $k$) exists, such that
\begin{equation*}
V_{k+1}(x_{k+1},u_{k+1}) - V_k(x_k,u_k) \leq - c_{\text{V}1} T_k |u_k|^2 - c_{\text{V}2} |R_k|^2 + c_{\text{V}3} T_k^2 - \alpha \sum_{i\in I_{x_k}} \lambda_k^i g_i (x_k)
\end{equation*}
for $k\geq k_0$ and for some constants $k_0>0$, $c_{\text{V}1}, c_{\text{V}2}, c_{\text{V}3} > 0$. A detailed derivation of this step is included in App.~\ref{App:ConvDT} (see Lemma~\ref{Lem:Lyap}). Furthermore, as a result of the diminishing step-size, we infer that $g_i(x_k)\geq -c_\text{g} T_k$ holds for all sufficiently large $k$ and a constant $c_\text{g}>0$ (see Lemma~\ref{Lemma:constr} in App.~\ref{App:ConvDT}). We can therefore bound the terms $\lambda_k^i g_i(x_k)$ as follows
\begin{equation}
|-\alpha \sum_{i\in I_{x_k}} \lambda_k^i g_i (x_k)| \leq \frac{\alpha}{c_\lambda} c_\text{g} T_k |R_k| n_\text{g} \leq \frac{c_{\text{V}2}}{2} |R_k|^2 + \frac{1}{ 2 c_{\text{V}2}} (n_\text{g} c_\text{g} \frac{\alpha}{c_\lambda})^2 T_k^2,
\end{equation}
where we have exploited constraint qualification to conclude that $|\lambda_k|\leq |R_k|/c_\lambda$ (see Lemma~\ref{lemma:d2} in App.~\ref{App:ConvDT}) in the first step, and we have used Young's inequality in the second step. We therefore obtain 
\begin{equation}
V_{k+1}(x_{k+1},u_{k+1}) - V_k(x_k,u_k) \leq - c_{\text{V}1} T_k |u_k|^2 - \frac{c_{\text{V}2}}{2} |R_k|^2 + \bar{c}_{\text{V}3} T_k^2, \label{eq:proofdecV}
\end{equation}
for large enough $k$ and a modified constant $\bar{c}_{\text{V}3}>0$. We further introduce the constants
\begin{equation*}
-c_{\text{V}0} = \inf_k V_k(x_k,u_k), \qquad c_{\text{T}1}= \sum_{k=k_0}^{\infty} T_k^2,
\end{equation*}
and make the following claim: There exists a subsequence $k(j), j=1,2,\dots$, such that 
\begin{equation*}
-c_{\text{V}1} T_{k(j)} |u_{k(j)}|^2 - \frac{c_{\text{V}2}}{2} |R_{k(j)}|^2 \geq \underbrace{\left(-\bar{c}_{\text{V}3} - \frac{V_{k_0}(x_{k_0},u_{k_0})+c_{\text{V}0}}{c_{\text{T}1}} - \frac{1}{c_{\text{T}1}} \right)}_{:=-c_{\text{V}4}} T_{k(j)}^2,
\end{equation*}
where $c_{\text{V}4}>0$.
For the sake of contradiction we assume that the claim is not true, which means that 
\begin{equation*}
-c_{\text{V}1} T_{k} |u_{k}|^2 - \frac{c_{\text{V}2}}{2} |R_{k}|^2 \leq - {c}_{\text{V}4} T_{k}^2
\end{equation*}
for all $k>0$. However, we have chosen the constant $c_{\text{V}4}$ deliberately in such a way that we can generate a contradiction when summing over $V_{k+1}(x_{k+1},u_{k+1})-V_{k}(x_k,u_k)$. More precisely,
\begin{align*}
-c_{\text{V}0} - V_{k_0}(x_{k_0},u_{k_0})& \leq V_N(x_N,u_N) - V_{k_0}(x_{k_0},u_{k_0}) \leq \sum_{k=k_0}^{N} (-c_{\text{V}4} + \bar{c}_{\text{V}3}) T_k^2 \\
&\leq \frac{-c_{\text{V}0} - V_{k_0}(x_{k_0},u_{k_0}) -1 }{c_{\text{T}1}} \sum_{k=k_0}^{N} T_k^2,
\end{align*}
which leads to the desired contradiction since the right-hand side approaches $-c_{\text{V}_0} - V_{k_0}(x_{k_0},u_{k_0}) - 1$ for $N\rightarrow \infty$.

Upon passing to another subsequence we infer that $u_{k(j)} \rightarrow 0$,  $R_{k(j)}/T_{k(j)} \rightarrow \bar{R}$, and $x_{k(j)} \rightarrow \bar{x}\in C$. By the properties of the set $V_\alpha(x)$ we conclude that $\bar{x}$ is stationary and $\bar{x}$ and $\bar{R}$ satisfy the Karush-Kuhn-Tucker conditions, see Lemma~\ref{Lemma:d3} in App.~\ref{App:ConvDT}.

We now prove that the entire sequence converges. \hchange{We infer from \eqref{eq:proofdecV} and \cite[Lemma 2, Sec.~2.2.1]{Polyak} that the sequence $V_k(x_k,u_k)$ converges and satisfies
\begin{equation*}
\lim_{k\rightarrow \infty} V_k(x_k,u_k)= \lim_{j\rightarrow \infty} \left(\frac{1}{2}|u_{k(j)}|^2 + f(x_{k(j)}) \right) = f(\bar{x}),
\end{equation*}}
where $\bar{x}$ is the limit of $x_{k(j)}$ (see above). We further introduce the collection of all accumulation points of $x_k,u_k$, that is,
\begin{equation*}
\omega := \bigcap_{m\geq 0} \text{cl}\{(x_k,u_k):k>m\},
\end{equation*}
where $\text{cl}$ denotes closure. We note that $\omega\subset C \times \mathbb{R}^n$ is connected (since $|x_{k+1}-x_k| \rightarrow 0$ and $|u_{k+1}-u_k|\rightarrow 0$) and for any $(x,u)\in \omega$, $|u|^2/2+f(x)=f(\bar{x})$. 

We claim that $\omega=\{(\bar{x},0)\}$. For the sake of contradiction, we assume the existence of a sequence $(\tilde{x}_k, \tilde{u}_k) \rightarrow \hchange{(\tilde{x},\tilde{u})}$, $(\tilde{x}_k,\tilde{u}_k)\in \omega$ with \hchange{$(\tilde{x},\tilde{u}) \neq (\bar{x},0)$} ($\omega$ is connected). We consider first the case where $\bar{x}$ lies in the interior of $C$ (constraints are not active). In that case $\tilde{x}_k$ lies likewise in the interior of $C$ for large $k$. However, in the absence of constraints the dynamics in \eqref{eq:dis1} are smooth, which means that $\tilde{x}_k$ and $\tilde{u}_k$ are guaranteed to be equilibria and therefore satisfy $\tilde{u}_k=0$ and $\nabla f(\tilde{x}_k)=0$. This contradicts the fact that $f$ has isolated stationary points. Next, we consider the case where $\bar{x}$ lies on the boundary of $C$, where we infer from $(\tilde{x},\tilde{u})\in \omega$ that \hchange{$f(\tilde{x})=f(\bar{x})-|\tilde{u}|^2/2$. Therefore $f(\tilde{x}) \leq  f(\bar{x})$, $\tilde{x}\in C$,} which also contradicts the fact that $\bar{x}$ is an isolated stationary point.

Thus, we conclude $\omega=\{(\bar{x},0)\}$ and the result follows.
\end{proof}

We note that the behavior of algorithm \eqref{eq:dis1} is complex, as it relies on a \emph{local} approximation of the feasible set, whereby multiple constraints can become active or inactive over the course of the optimization. Establishing Thm.~\ref{Thm:ConvDT} is therefore nontrivial and requires blending ideas from numerical analysis, optimization, and dynamical systems.

We conclude the section by proving an accelerated rate for a slightly modified version of \eqref{eq:dis1} in a convex setting. The algorithm we will consider is the following:
\begin{empheq}[box=\widefbox]{align}
%\begin{align}
&u_{k+1}=\argmin_{v\in \mathbb{R}^n} \frac{1}{2} |v-u_k+2 \delta T_k u_k +\nabla f(y_k)T_k|^2, \nonumber\\
& ~~\text{s.t.}~~\nabla g_i(y_k)\T v \geq -\alpha g_i(x_k)-\frac{g_i(y_k)-g_i(x_k)-\nabla g_i(y_k)\T \beta u_k}{T_k}, ~~ i\in [n_\text{g}],\nonumber\\
& ~~~~~~~~ y_k=x_k+\beta u_k,\nonumber \\
&x_{k+1}=x_{k}+T_k u_{k+1}.\label{eq:disMod}
%\end{align}
\end{empheq}
There are a few differences between \eqref{eq:dis1} and \eqref{eq:disMod}. Instead of evaluating $\nabla g$ at $x_k$, the gradient is evaluated at $x_k+\beta u_k$, which will simplify our proof (both $\nabla f$ and $\nabla g$ are evaluated at the same point). Furthermore the right-hand side of the inequality constraint includes an additional term that can be related to the curvature of $g_i$ as follows
\begin{equation*}
-\frac{g_i(y_k)-g_i(x_k)-\nabla g_i(y_k)\T \beta u_k}{T_k} = \frac{\nabla^2 g_i(\hchange{\xi_k^i}) \beta^2 |u_k|^2}{2T_k},
\end{equation*}
for some $\hchange{\xi_k^i}$ between $x_k$ and $y_k$, where Taylor's theorem has been used in the second step. \hchange{These modifications simplify the analysis and will enable us to derive accelerated non-asymptotic convergence rates for the discrete algorithm \eqref{eq:disMod}. We conjecture (based on numerical experiments, see Sec.~\ref{Sec:NumEx}) that the modification of the right-hand side of the inequality constraints in \eqref{eq:disMod} is an artifact of our analysis.}

\begin{theorem}
Let $f$ be $\mu$-strongly convex and let $g$ be smooth and concave. Let $l(x)=f(x)-{\lambda^*}\T g(x)$ denote the Lagrangian, where $\lambda^*$ denotes an optimal multiplier of \eqref{eq:fundProb}. Then, the iterates of \eqref{eq:disMod} with $T_k=1/\sqrt{L_l}$, $\delta=\sqrt{L_l}/(\sqrt{\kappa_l}+1)$, $\beta=T_k(1-2\delta T_k)$, $\alpha=\delta$, and $u_0=0$ satisfy:
\begin{equation*}
\frac{\mu}{2} |x_k-x^*|^2 \leq l(x_k)-l(x^*)\leq \left(1-\frac{1}{1+\sqrt{\kappa_l}}\right)^k \left(\frac{L_l}{8} |x_0-x^*|^2 +  l(x_0)-l(x^*)\right),
\end{equation*}
for all $k\geq 0$, where $L_l$ denotes the smoothness constant of $l$ and $\kappa_l=L_l/\mu$. \label{Thm:ADT}
\end{theorem}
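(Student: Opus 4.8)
The plan is to recognize \eqref{eq:disMod} as a velocity/momentum form of Nesterov's accelerated gradient method applied to the Lagrangian $l$, and then to run the classical Lyapunov (estimate-sequence) analysis for $\mu$-strongly convex, $L_l$-smooth functions, treating the constraint force as a perturbation controlled by complementary slackness and the concavity of $g$. First I would record the properties of $l$: since $g$ is concave and $\lambda^*\ge 0$, the map $x\mapsto-{\lambda^*}\T g(x)$ is convex, so $l=f-{\lambda^*}\T g$ is $\mu$-strongly convex, and by hypothesis it is $L_l$-smooth with $\kappa_l=L_l/\mu$. The KKT conditions of \eqref{eq:fundProb} give $\nabla l(x^*)=\nabla f(x^*)-\sum_i\lambda_i^*\nabla g_i(x^*)=0$ and ${\lambda^*}\T g(x^*)=0$, so $x^*$ is the (unique) minimizer of $l$ and $l(x^*)=f(x^*)$. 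The left-hand inequality $\tfrac{\mu}{2}|x_k-x^*|^2\le l(x_k)-l(x^*)$ is then immediate from $\mu$-strong convexity of $l$ at its minimizer.

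Next I would eliminate the inner constraint. Writing the KKT conditions of the quadratic program defining $u_{k+1}$ yields nonnegative multipliers $\Lambda_{ki}$ with $u_{k+1}=(1-2\delta T_k)u_k-T_k\nabla f(y_k)+\sum_i\nabla g_i(y_k)\Lambda_{ki}$ and complementary slackness with the linear constraints; equivalently, $u_{k+1}$ is the Euclidean projection of the plain momentum update $(1-2\delta T_k)u_k-T_k\nabla f(y_k)$ onto the polyhedron in \eqref{eq:disMod}. Using $\beta=T_k(1-2\delta T_k)$ one checks $y_k-x_k=\beta u_k$ and $x_{k+1}=x_k+T_ku_{k+1}=y_k-\tfrac1{L_l}G_k$, where $G_k:=\nabla f(y_k)-\sum_i\tilde\Lambda_{ki}\nabla g_i(y_k)$ and $\tilde\Lambda_{ki}:=\Lambda_{ki}/T_k^2\ge 0$; thus \eqref{eq:disMod} is exactly Nesterov's step for $f$ with the gradient $\nabla f(y_k)$ replaced by the ``constrained gradient'' $G_k$. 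I would also record the parameter identities $T_k^2=1/L_l$ and $\alpha T_k=\delta T_k=1/(1+\sqrt{\kappa_l})=1-\rho$, and -- after contracting the feasibility of $u_{k+1}$ in \eqref{eq:disMod} against $\lambda^*\ge 0$ and using ${\lambda^*}\T g=f-l$ -- the bridging inequality $\sum_i\lambda_i^*\nabla g_i(y_k)\T(x_{k+1}-y_k)\ge\rho\,(f(x_k)-l(x_k))-(f(y_k)-l(y_k))$, which converts the step from $f$-geometry into $l$-geometry.

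Then I would carry out the Lyapunov argument. Chaining $L_l$-smoothness of $l$ along $x_{k+1}=y_k-\tfrac1{L_l}G_k$, the bridging inequality, and $\mu$-strong convexity of $f$ at a generic point $x$, together with the elementary identity $\tfrac{L_l}{2}|x_{k+1}-y_k|^2+L_l\langle y_k-x_{k+1},x_{k+1}-x\rangle=\tfrac{L_l}{2}(|y_k-x|^2-|x_{k+1}-x|^2)$, leads to the one-step inequality
\[ l(x_{k+1})\le f(x)+\tfrac{L_l-\mu}{2}|y_k-x|^2-\tfrac{L_l}{2}|x_{k+1}-x|^2+\textstyle\sum_i\tilde\Lambda_{ki}\nabla g_i(y_k)\T(x_{k+1}-x)-\rho\,(f(x_k)-l(x_k)), \]
valid for every $x$. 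For the force term I would use complementary slackness of the inner program (only active $i$ contribute, and for those $\nabla g_i(y_k)\T(x_{k+1}-y_k)+g_i(y_k)=\rho\,g_i(x_k)$) together with concavity ($\nabla g_i(y_k)\T(y_k-x)\le g_i(y_k)-g_i(x)$) to bound it by $\sum_i\tilde\Lambda_{ki}(\rho\,g_i(x_k)-g_i(x))$. Evaluating at $x=x^*$ (where $g_i(x^*)\ge0$, ${\lambda^*}\T g(x^*)=0$, $f(x^*)=l(x^*)$) and at $x=x_k$, and combining the two instances in a suitable convex combination, I would introduce the auxiliary sequence $v_k:=x_k+c\,u_k$ with $c$ pinned down by the parameter identities (so that $v_0=x_0$, using $u_0=0$) and the Lyapunov function $\mathcal E_k:=(l(x_k)-l(x^*))+\tfrac{L_l}{8}|v_k-x^*|^2$, and verify $\mathcal E_{k+1}\le\rho\,\mathcal E_k$ with $\rho=1-1/(1+\sqrt{\kappa_l})$. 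Telescoping gives $\mathcal E_k\le\rho^k\mathcal E_0=\rho^k\big((l(x_0)-l(x^*))+\tfrac{L_l}{8}|x_0-x^*|^2\big)$, and dropping the nonnegative distance term yields the right-hand inequality.

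The step I expect to be the main obstacle is the control of the constraint-force residual: the discrepancy between the force $\sum_i\tilde\Lambda_{ki}\nabla g_i(y_k)$ actually produced by \eqref{eq:disMod} and the Lagrangian term $\sum_i\lambda_i^*\nabla g_i(y_k)$. The inner-QP multipliers $\tilde\Lambda_{ki}$ need not equal $\lambda_i^*$, and the active set of \eqref{eq:disMod} changes from iteration to iteration, so one cannot simply quote a textbook accelerated-gradient proof; the task is to show that the leftover contribution -- schematically $\rho\sum_i(\tilde\Lambda_{ki}-\lambda_i^*)g_i(x_k)$ together with $-\sum_i\tilde\Lambda_{ki}g_i(x^*)\le0$ -- is nonpositive or is absorbed into the surplus $\tfrac{L_l-\mu}{2}|y_k-x^*|^2$ and the margin $f(x_k)-f(x^*)$. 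The two facts that must be made to cooperate are (i) complementary slackness of the inner program, which pins $\nabla g_i(y_k)\T(x_{k+1}-y_k)+g_i(y_k)=\rho\,g_i(x_k)$ on the support of $\tilde\Lambda_k$, hence by concavity $g_i(x_{k+1})\le\rho\,g_i(x_k)$ there -- a one-step contraction of constraint values that also supplies a feasibility-tracking statement for the iterates -- and (ii) complementary slackness ${\lambda^*}\T g(x^*)=0$ of \eqref{eq:fundProb}. Reconciling these to close the recursion is the crux; the remainder is the routine Nesterov bookkeeping forced by $T_k=1/\sqrt{L_l}$, $\delta=\sqrt{L_l}/(\sqrt{\kappa_l}+1)$, $\beta=T_k(1-2\delta T_k)$, $\alpha=\delta$.
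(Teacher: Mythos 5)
Your outline has the right skeleton and matches the paper's strategy in several places: passing to the Lagrangian geometry of $l$, getting the left inequality from $\mu$-strong convexity of $l$ at $x^*$, contracting the inner QP's feasibility constraint against $\lambda^*\ge 0$ to obtain the ``bridging'' inequality $\sum_i\lambda_i^*\nabla g_i(y_k)\T(x_{k+1}-y_k)\ge \rho\,{\lambda^*}\T g(x_k)-{\lambda^*}\T g(y_k)$, and closing with a Lyapunov function of the form (distance term) $+\,l(x_k)-l(x^*)$ contracting at rate $\rho$. However, the step you yourself flag as the crux --- controlling the inner-QP force $\sum_i\tilde\Lambda_{ki}\nabla g_i(y_k)$ relative to $\lambda^*$ --- is left unresolved, and the route you sketch for it does not go through as stated: you keep the multiplier term $\sum_i\tilde\Lambda_{ki}\nabla g_i(y_k)\T(x_{k+1}-x)$ in your one-step inequality and then hope to absorb a residual of the schematic form $\rho\sum_i(\tilde\Lambda_{ki}-\lambda_i^*)g_i(x_k)$ into the surplus $\tfrac{L_l-\mu}{2}|y_k-x^*|^2$ and the margin $f(x_k)-f(x^*)$. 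But the multipliers $\tilde\Lambda_{ki}$ of the inner program are not a priori bounded, $g_i(x_k)$ can have either sign along the (possibly infeasible) iterates, and nothing available ties $\tilde\Lambda_{ki}$ to $\lambda_i^*$ pointwise, so this decisive inequality is missing and the proposal does not yet constitute a proof.

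The paper closes exactly this gap without ever comparing the QP multipliers with $\lambda^*$, by two devices you did not use. First, after the change of variables the update is the projection problem \eqref{eq:appref}, and the specific point $x_k+\alpha(x^*-x_k)$ is \emph{feasible} for it (this uses concavity of $g$, $g(x^*)\ge 0$, and $\alpha T_k\le 1$); plugging this comparison point into the projection's variational inequality yields the needed one-step bound with no explicit multipliers at all. Second, the choice $\alpha=\delta$ and the Lyapunov function $\tfrac12|\delta(x_k-x^*)+(1-\delta)u_k|^2+l(x_k)-l(x^*)$ are matched so that every term containing the realized step direction $x_{k+1}-y_k=-\nabla f(y_k)+\nabla g(y_k)\lambda_k$ cancels identically between the expansion of the quadratic term and the smoothness bound on $l(x_{k+1})$; the only surviving constraint terms carry the fixed $\lambda^*$ and are handled by your bridging inequality together with ${\lambda^*}\T g(x^*)=0$, while the remaining quadratic and cross terms are killed by Young's inequality under the stated parameter identities. (A minor further point: the contraction is established with weight $\delta^2/2=L_l/\bigl(2(1+\sqrt{\kappa_l})^2\bigr)$ on the distance term, not $L_l/8$; the constant $L_l/8$ only enters as an upper bound on the initial value, using $u_0=0$.)
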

\begin{proof}
We simplify the presentation by assuming that $L_l=\hchange{T_k}=1$, which is without loss of generality. The proof hinges on the following Lyapunov function:
\begin{equation*}
V(x,u):=\frac{1}{2} |\delta (x-x^*) + (1-\delta) u|^2+ l(x)-l(x^*),
\end{equation*}
which can be readily verified to be positive definite for all $\kappa_l\geq 1$. Moreover, the evolution of $V$ along the iterates \eqref{eq:disMod} is given by \hchange{
\begin{multline*}
 V_{k+1}-V_k=\frac{1}{2} |\delta (x_{k+1}-x_k)+ (1-\delta) (u_{k+1}-u_k)|^2 + l(x_{k+1})-l(x_k)\\
 + (\delta (x_{k+1}-x_k)+ (1-\delta) (u_{k+1}-u_k))\T (\delta (x_k-x^*)+(1-\delta) u_k),
\end{multline*}
where we abbreviated $V(x_k,u_k)$ by $V_k$. We further rewrite $u_{k+1}=(1-2\delta) u_k-\nabla \bar{l}(y_k)$, with $\nabla \bar{l}(y_k):=\nabla f(y_k)- \nabla g(y_k)\lambda_k$ and $\lambda_k$ an optimal multiplier of \eqref{eq:disMod} (see also \eqref{eq:appref} below). This implies that $\delta (x_{k+1}-x_k)+ (1-\delta) (u_{k+1}-u_k)=u_{k+1}-(1-\delta)u_k=-\delta u_k - \nabla \bar{l}(y_k)$ and therefore
\begin{multline*}
    V_{k+1}-V_k=\frac{1}{2} |\delta u_k + \nabla \bar{l}(y_k)|^2 + l(x_{k+1})-l(x_k)\\
    +(-\delta u_k - \nabla \bar{l}(y_k))\T (\delta (x_k-x^*) + (1-\delta) u_k). 
\end{multline*}
By expanding the terms and exploiting that $\beta=1-2\delta$ we arrive at}
\begin{align}
V_{k+1}-V_k = -\delta (1-\frac{3\delta}{2}) |u_k|^2 -\beta u_k\T \nabla \bar{l}(y_k)+\delta  (x^*-x_k)\T \nabla \bar{l}(y_k)\nonumber\\
+\delta^2  u_k\T (x^*-x_k) +\frac{1}{2} |\nabla \bar{l}(y_k)|^2+l(x_{k+1})-l(x_k). \label{eq:decV}
\end{align}

Next, we will relate $l(x_{k+1})$ to $l(y_k)$. To do so, we start by slightly reformulating the minimization in \eqref{eq:disMod} by performing a change of variables, $x=x_k+T_k v$, which yields
\begin{align}
x_{k+1}&=\argmin_{x\in \mathbb{R}^n} \frac{1}{2} |x-y_k+\nabla f(y_k)|^2\label{eq:appref}\\
&\text{s.t.}~\nabla g_i(y_k)\T (x-y_k) \geq (1-\alpha T_k) (g_i(x_k)-g_i(y_k))-\alpha T_k g_i(y_k), ~~i\in [n_\text{g}], \nonumber
\end{align}
where we have used the fact that $T_k(1-2\delta T_k)=\beta$ to simplify the objective function. We note that $x=\alpha (x^*-x_k)+x_k$ is a feasible solution candidate in the minimization \eqref{eq:appref}. This follows from the following reasoning:
\begin{align*}
\nabla g_i(y_k) (\alpha (x^*-x_k)+x_k-y_k)&=(1-\alpha)\nabla g_i(y_k)\T (x_k-y_k)+\alpha \nabla g_i(y_k)\T (x^*-y_k)\\
&\geq (1-\alpha) (g_i(x_k)-g_i(y_k)) + \alpha (g_i(x^*)-\hchange{g_i}(y_k)),
\end{align*}
where concavity of $g$ is used in the second step together with the fact that $T_k=1$ and $g_i(x^*)\geq 0$. As a result, the stationarity condition of \eqref{eq:appref} implies
\begin{equation*}
(x_{k+1}-y_k+\nabla f(y_k))\T (\alpha (x^*-x_k)+x_k-x_{k+1})\geq 0.
\end{equation*}
This inequality can be restated as follows:
\begin{multline*}
(x_{k+1}-y_k+\nabla f(y_k))\T (\alpha (x^*-x_k)-\beta u_k -(x_{k+1}-y_k))\\
= 
- |\nabla \bar{l}(y_k)|^2 -\nabla \bar{l}(y_k)\T (\alpha (x^*-x_k)-\beta u_k) \\
+ \nabla f(y_k)\T (\alpha (x^*-x_k) -\beta u_k) - \nabla f(y_k)\T (x_{k+1}-y_k) \geq 0.
\end{multline*}
Rearranging this inequality yields the following bound:
\begin{multline}
\nabla l(y_k)\T (x_{k+1}-y_k)\leq - |\nabla \bar{l}(y_k)|^2 -\nabla \bar{l}(y_k)\T (\alpha (x^*-x_k)-\beta u_k) \nonumber\\
+ \nabla f(y_k)\T (\alpha (x^*-x_k) -\beta u_k)  -{\lambda^*}\T \nabla g(y_k)\T (x_{k+1}-y_k).
\end{multline}
As a result of this inequality and the smoothness of $l$, we can bound the evolution of $l(x_{k+1})$ as
\begin{align}
l(x_{k+1})&\leq l(y_k)+ \nabla l(y_k)\T (x_{k+1}-y_k) +\frac{1}{2} |\nabla \bar{l}(y_k)|^2\nonumber\\
&\leq l(y_k) - \frac{1}{2} |\nabla \bar{l}(y_k)|^2-\nabla \bar{l}(y_k)\T (\alpha (x^*-x_k)-\beta u_k) \nonumber\\
&~~+ \nabla f(y_k)\T (\alpha (x^*-x_k) -\beta u_k)  -{\lambda^*}\T \nabla g(y_k)\T (x_{k+1}-y_k). \label{eq:apptmp223}
\end{align}
By combining \eqref{eq:apptmp223} with \eqref{eq:decV} we obtain
\begin{align}
V_{k+1}-V_k\leq  -\delta (1-\frac{3\delta}{2}) |u_k|^2 +\nabla f(y_k)\T (\alpha (x^*-x_k) -\beta u_k)  +l(y_k)-l(x_k) \nonumber\\
-{\lambda^*}\T \nabla g(y_k)\T (x_{k+1}-y_k)
+\delta^2  u_k\T (x^*-x_k), \label{eq:gsf}
\end{align}
where we have exploited that $\alpha=\delta$ and the fact that all the terms containing $\nabla \bar{l}(y_k)$ cancel out (in fact the Lyapunov function and the algorithm \eqref{eq:disMod} are specifically engineered in this way). The strong convexity of $f$ implies that 
\begin{multline}
\nabla f(y_k)\T (\delta (x^*-x_k) - \beta u_k)\leq -\delta (f(x_k)-f(x^*)) - \frac{\delta\mu}{2} |x^*-x_k|^2 - \delta\beta\mu (x_k-x^*)\T u_k \nonumber\\
- \frac{\mu}{2} \beta^2 |u_k|^2 +f(x_k)-f(y_k);\label{eq:apptmp3}
\end{multline}
see \citet[App.~A6]{ourWork}, which yields, combined with \eqref{eq:gsf},
\begin{multline*}
V_{k+1}-V_k\leq  -\delta (1-\frac{3\delta}{2}+\frac{\beta^2\mu}{2\delta}) |u_k|^2- \frac{\delta\mu}{2} |x^*-x_k|^2 + \delta(\beta\mu+\delta) (x^*-x_k)\T u_k \\
-\delta (f(x_k)-f(x^*))
-\hchange{{\lambda^*}\T} g(y_k) + {\lambda^*}\T g(x_k) 
-{\lambda^*}\T \nabla g(y_k)\T (x_{k+1}-y_k).
\end{multline*}
This can be rearranged as follows:
\begin{multline*}
\!\!\!V_{k+1}-V_k\leq -\delta V_k -\frac{1}{2} (\delta-\delta^2-\delta^3+\beta^2\mu) |u_k|^2- \frac{\delta\mu-\delta^3}{2} |x^*-x_k|^2 + \delta(\beta\mu+\delta^2) (x^*-x_k)\T u_k\\
+(1-\delta) {\lambda^*}\T g(x_k)-{\lambda^*}\T g(y_k)
-{\lambda^*}\T \nabla g(y_k)\T (x_{k+1}-y_k).
\end{multline*}
We further note that the inequality constraint in \eqref{eq:appref} implies
\begin{equation*}
-\nabla g_i(y_k)\T (x_{k+1}-y_k)+(1-\delta) g(x_k)-g(y_k)\leq 0,
\end{equation*}
and therefore we obtain
\begin{equation*}
V_{k+1}-V_k\leq   -\delta V_k -\frac{1}{2} (\delta-\delta^2-\delta^3+\beta^2\mu) |u_k|^2- \frac{\delta\mu-\delta^3}{2} |x^*-x_k|^2 + \delta(\beta\mu+\delta^2) (x^*-x_k)\T u_k.
\end{equation*}
We note that $\beta \mu=(1-\mu) \delta^2$ and therefore $\beta\mu+\delta^2=\delta^2 (2-\mu)$. We now apply Young's inequality to the term $(x^*-x_k)\T u_k$ and conclude
\begin{equation*}
-\frac{1}{2}(\delta-\delta^2-\delta^3+\beta^2\mu) |u_k|^2- \frac{\delta\mu-\delta^3}{2} |x^*-x_k|^2 + \delta^3 (2-\mu) (x^*-x_k)\T u_k\leq 0,
\end{equation*} 
for all $\kappa_l\geq 1$, $x^*-x_k$, and $u_k$. This yields $V_{k+1}-V_k\leq -\delta V_k$ and concludes the proof.
\end{proof}

Thm.~\ref{Thm:ADT} requires knowledge of the smoothness constant of $l$, which can be restrictive for certain applications. However, by following the same argument as in App.~\ref{App:Sec1} we obtain the following corollary that proves a $\mathcal{O}(\log(1/\varepsilon)/\sqrt{\varepsilon})$ convergence rate if $L_l$ is unknown.
\begin{corollary}\label{Cor:SAR}
Let the function $f$ be $\mu$-strongly convex and $L$-smooth, let $g$ be concave and $L_\text{g}$-smooth, and let 
\begin{equation*}
B:=\max_{x\in C} |\nabla f(x)|^2/(2\mu).
\end{equation*}
Then, the iterate $x_N$ of \eqref{eq:disMod} with $T_k=1/\sqrt{L_l}$, $\delta=\sqrt{L_l}/(\sqrt{\kappa_l}+1)$, $\beta=T_k(1-2\delta T_k)$, $\alpha=\delta$, and $L_l=L+B L_\text{g}/\varepsilon$ satisfies $|x_N-x^*|\leq \epsilon$, where
\begin{equation*}
N\!\geq\!2\sqrt{\frac{L\!+\!B L_\text{g}/\varepsilon}{\mu}} \!\left( \!\!2\log(1/\varepsilon) \!+\! \log\left(\frac{5L\!+\!5B L_\text{g}/\varepsilon}{4\mu}\right) \!+\! 2 \log(|x^*-x_0|)\!\right)\!=\!\mathcal{O}\!\left(\!\frac{\log(1/\varepsilon)}{\sqrt{\varepsilon}}\!\right),
\end{equation*}
and $x^*$ denotes the minimizer of
$\min_{x\in \mathbb{R}^n} f(x)~\text{s.t.}~g(x)\geq - \varepsilon$.
\end{corollary}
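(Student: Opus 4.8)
\emph{Proof strategy.} The plan is to apply Theorem~\ref{Thm:ADT} to the relaxed problem $\min_x f(x)$ subject to $g(x)+\epsilon\geq 0$ (whose constraint function inherits the smoothness constant $L_\text{g}$ and the concavity of $g$, and for which Slater's condition holds as soon as $C\neq\emptyset$), and then to convert the resulting linear rate into an iteration count. The only quantity not available a priori is the smoothness constant of the Lagrangian $l$; I would bound it by controlling the optimal multiplier $\lambda^*$ of the relaxed problem.

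\emph{Step 1 (bounding $\lambda^*$).} Let $x^\dagger=\argmin_{x\in C} f(x)$ and $f^\star_{\mathrm u}:=\min_{x\in\mathbb{R}^n} f(x)$. Since $g(x^\dagger)\geq 0$, the point $x^\dagger$ is strictly feasible for the relaxed problem with margin $\epsilon$. Writing the dual function of the relaxed problem as $q(\lambda)=\min_x\{f(x)-\sum_i\lambda_i(g_i(x)+\epsilon)\}$ and using strong duality, $q(\lambda^*)=f(x^*)$, while evaluating at $x^\dagger$ gives $q(\lambda^*)\leq f(x^\dagger)-\epsilon\sum_i\lambda^*_i$ (because $g_i(x^\dagger)\geq 0$ and $\lambda^*_i\geq 0$). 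Hence
\begin{equation*}
\epsilon\sum_i\lambda^*_i\leq f(x^\dagger)-f(x^*)\leq f(x^\dagger)-f^\star_{\mathrm u}\leq \frac{|\nabla f(x^\dagger)|^2}{2\mu}\leq B,
\end{equation*}
using $f(x^*)\geq f^\star_{\mathrm u}$, the standard inequality $f(x)-f^\star_{\mathrm u}\leq|\nabla f(x)|^2/(2\mu)$ for $\mu$-strongly convex $f$, and $x^\dagger\in C$. Since $l=f-\sum_i\lambda^*_i g_i$ is the sum of an $L$-smooth function and a nonnegative combination of $L_\text{g}$-smooth functions, its smoothness constant is at most $L+(\sum_i\lambda^*_i)L_\text{g}\leq L+BL_\text{g}/\epsilon$. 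Thus the value $L_l:=L+BL_\text{g}/\epsilon$ prescribed in the statement is a valid upper bound on the true smoothness of $l$, and running \eqref{eq:disMod} with this overestimate is legitimate, exactly as in the gradient-descent corollary of App.~\ref{App:Sec1}: a larger $L_l$ only shrinks the step size and yields the (provably valid, slower) rate $(1-1/(1+\sqrt{\kappa_l}))^k$ with $\kappa_l=L_l/\mu$.

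\emph{Step 2 (rate to iteration count).} Theorem~\ref{Thm:ADT} then gives $\tfrac{\mu}{2}|x_k-x^*|^2\leq(1-1/(1+\sqrt{\kappa_l}))^k D$ with $D:=\tfrac{L_l}{8}|x_0-x^*|^2+l(x_0)-l(x^*)$. Since $x^*$ minimizes $l$ and $l$ is $L_l$-smooth, $l(x_0)-l(x^*)\leq\tfrac{L_l}{2}|x_0-x^*|^2$, so $D\leq\tfrac{5L_l}{8}|x_0-x^*|^2$. Using $1-t\leq e^{-t}$ and $1+\sqrt{\kappa_l}\leq 2\sqrt{\kappa_l}$ (valid because $L_l\geq L\geq\mu$), the condition $(1-1/(1+\sqrt{\kappa_l}))^N D\leq\mu\epsilon^2/2$, which forces $|x_N-x^*|\leq\epsilon$, holds as soon as $N\geq 2\sqrt{\kappa_l}\log(2D/(\mu\epsilon^2))$. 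Substituting $D\leq\tfrac{5L_l}{8}|x_0-x^*|^2$, $L_l=L+BL_\text{g}/\epsilon$, $\kappa_l=L_l/\mu$, and expanding $\log\bigl(5L_l|x_0-x^*|^2/(4\mu\epsilon^2)\bigr)=\log\bigl((5L+5BL_\text{g}/\epsilon)/(4\mu)\bigr)+2\log(1/\epsilon)+2\log|x^*-x_0|$ yields the displayed bound on $N$; the $\mathcal{O}(\log(1/\epsilon)/\sqrt\epsilon)$ estimate follows since $\sqrt{\kappa_l}=\mathcal{O}(1/\sqrt\epsilon)$ while the bracketed logarithmic factor is $\mathcal{O}(\log(1/\epsilon))$.

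The main obstacle is Step~1: the relaxation is precisely what supplies a quantitative strict-feasibility margin, and it is this margin, combined with weak duality and the gradient-domination inequality, that turns the a priori unknown smoothness of $l$ into the explicit expression $L+BL_\text{g}/\epsilon$. Everything else — monotonicity of the guarantee of Theorem~\ref{Thm:ADT} in the chosen estimate $L_l$, and the elementary algebra of Step~2 — is routine and mirrors App.~\ref{App:Sec1}.
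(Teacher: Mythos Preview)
Your proof is correct and follows the same overall structure that the paper indicates (``by following the same argument as in App.~\ref{App:Sec1}''): bound the optimal multiplier of the relaxed problem by $B/\epsilon$, deduce $L_l\leq L+BL_\text{g}/\epsilon$, and then convert the rate of Theorem~\ref{Thm:ADT} into an iteration count.

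Where you differ from the paper is in Step~1. The paper's argument (given for the analogous gradient-descent corollary in App.~\ref{App:Sec1}) introduces an auxiliary penalized problem $\min_{x,\xi\geq 0} f(x)+\bar{B}\xi/\epsilon$ s.t.\ $g(x)\geq -\xi$, observes that its dual automatically restricts $|\lambda|_1\leq\bar{B}/\epsilon$, and then uses continuity of the optimal slack $\xi^*(\bar{B})$ in $\bar{B}$ (via the maximum theorem) together with the bound $\xi^*(\bar{B})\leq\epsilon B/\bar{B}$ to find a $B'\leq B$ for which the relaxed problem is recovered exactly. Your route is more direct: you simply take the constrained minimizer $x^\dagger\in C$ as an explicit Slater point with margin $\epsilon$ and read off the standard dual bound $\epsilon\,|\lambda^*|_1\leq f(x^\dagger)-f(x^*)\leq B$. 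This avoids the slack-variable construction and the continuity argument altogether, and is arguably the cleaner derivation; the paper's version, on the other hand, does not need to single out $x^\dagger$ and works through the penalty formulation, which some may find more systematic. Your Step~2 (bounding $D$ via $L_l$-smoothness of $l$, and the logarithm algebra) recovers exactly the displayed constant $5/4$, which confirms that the paper's intended argument is the same at this stage.
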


\hchange{Thm.~\ref{Thm:ADT} further requires strong convexity of the objective function. We also obtain an accelerated rate when $f$ is convex and smooth, but not strongly convex. This is summarized in the following corollary, which adds a small, but well-chosen, regularization to $f$ to render the problem strongly convex.}
\begin{corollary}\label{Cor:SCA}
    \hchange{Let the objective function $f$ be convex and $L$-smooth, let $g$ be concave and $L_g$-smooth, and let $\nabla^2 f(x^*)-\sum_{i=1}^{n_g}\lambda^*_i \nabla^2 g_i(x^*)$ be positive definite, where $\lambda^*$ denotes an optimal multiplier of \eqref{eq:fundProb}. Let $L_l$ denote the smoothness constant of the Lagrangian $l(x)=f(x)-g(x)\T \lambda^*$ and consider the modified objective function \begin{equation*}
    f_\mu(x)=f(x)+\mu |x-x_0|^2/2\quad \text{with} \quad \mu=\frac{16 L_l \mathrm{log}(N)^2}{N^2}.    
    \end{equation*} 
    Then, the iterate $x_N$ of \eqref{eq:disMod}, when applied to $f_\mu$ with $\kappa_l=L_l/\mu$, $T_k=1/\sqrt{2L_l}$, $\delta=\sqrt{2L_l}/(\sqrt{2\kappa_l}+1)$, $\beta=T_k(1-2\delta T_k)$, $\alpha=\delta$, and $u_0=0$ satisfies
    \begin{equation*}
        l(x_N)-l(x^*) \leq \mathcal{O}\left( \frac{\mathrm{log}(N)^2}{N^2}\right).
    \end{equation*}}
\end{corollary}
\begin{proof}
\hchange{We consider the regularized Lagrangian $l_\mu(x):=f_\mu(x)-g(x)\T \lambda_\mu^*$, where $\lambda_\mu^*$ denotes the optimal multiplier of \eqref{eq:fundProb} with $f$ replaced by $f_\mu$, and define $\lambda^*:=\lim_{\mu \downarrow 0} \lambda_\mu^*$. We further introduce the function $d(\lambda,\mu):=\min_{x\in\mathbb{R}^n} f_\mu(x)-\lambda\T g(x)$ and note that $\nabla_\lambda d(\lambda,\mu)$ is given by
    \begin{equation*}
        \nabla_\lambda d(\lambda,\mu)=-g(x_\mu^*), \quad \text{where~$x_\mu^*$~is defined via}~ \nabla f_\mu(x_\mu^*)-\nabla g(x_\mu^*) \lambda =0.
    \end{equation*}
    Due to the fact that $H:=\nabla^2 f(x^*)-\sum_{i=1}^{n_g} \lambda_i^* \nabla^2 g_i(x^*)$ is positive definite, we conclude from the implicit function theorem that $x^*_\mu$ is a continuously differentiable function of $\mu$ and $\lambda$ in a neighborhood of $\mu=0$, $\lambda=\lambda^*$, and has derivatives
    \begin{equation*}
        \frac{\partial x^*_{\mu}}{\partial \lambda}=H^{-1} \nabla g(x^*), \qquad \frac{\partial x^*_{\mu}}{\partial \mu} = -H^{-1} (x^*-x_0)
    \end{equation*}
    at $\mu=0, \lambda=\lambda^*$. This means that $\nabla_\lambda d(\lambda,\mu)$ is continuously differentiable in both arguments and that $\nabla^2_\lambda d(\lambda,\mu)$ is strictly negative definite in a neighborhood around $\mu=0$, $\lambda=\lambda^*$. We now apply Robinson's implicit function theorem, see, e.g., \cite[Thm.~2B.1]{Donchev},
    which yields
    \begin{equation*}
        |\lambda_\mu^*-\lambda^*| \leq |x^*-x_0|c_1 \mu,
    \end{equation*}
    for all $\mu$ in a neighborhood of the origin and where $c_1\geq 0$ is constant (the scaling with $|x^*-x_0|$ arises from the derivative $\partial x_\mu^*/\partial \mu$). This means that for large enough $N$, the modified Lagrangian $l_\mu$ is $2L_l$-smooth. Hence, as a result of Thm.~\ref{Thm:ADT}, we conclude
\begin{equation*}
l_\mu(x_N)-l_\mu(\tilde{x}^*) \leq \exp\left(-\frac{1}{1+\sqrt{2\kappa_l}} N\right) \left(\frac{5L_l}{4} |x_0-\tilde{x}^*|^2\right),
\end{equation*}
where we have used the fact that $e^{-\xi} \geq 1-\xi$ for all $\xi\in \mathbb{R}$, as well as the $2L_l$-smoothness of $l_\mu$. In addition, $\tilde{x}^*$ denotes the minimizer of \eqref{eq:fundProb} with $f$ replaced by $f_\mu$. As a result of the choice of $\mu$ we note that $\sqrt{2\kappa_l}=N/(2\mathrm{log}(N)\sqrt{2})$ and $1+\sqrt{2\kappa_l}\leq N/(2\mathrm{log}(N))$ for large $N$. Hence, we can simplify the right-hand side of the previous inequality for large $N$ as follows
\begin{equation*}
    l_\mu(x_N)-l_\mu(\tilde{x}^*) \leq \exp\left(-2\mathrm{log}(N) \right) \left(\frac{5L_l}{4} |x_0-\tilde{x}^*|^2\right) \leq \frac{1}{N^2} \left(\frac{5L_l}{4} |x_0-\tilde{x}^*|^2\right).
\end{equation*}
We now relate $l_\mu$ to $l$ in the following way:
\begin{align*}
    (f(x_N)-g(x_N)\T \lambda_\mu^*)-(f(x^*)-g(x^*)\T \lambda_\mu^*) &\leq l_\mu(x_N) - l_\mu(x^*) + \frac{\mu}{2} |x^*-x_0|^2\\
    &\leq l_\mu(x_N) - l_\mu(\tilde{x}^*) + \frac{\mu}{2} |x^*-x_0|^2,
\end{align*}
where we have used the fact that $l_\mu(\tilde{x}^*)\leq l_\mu(x ^*)$ in the second step. From the construction of $f_\mu$ we note that $|\tilde{x}^*-x_0|\leq |x^*-x_0|$, which yields, when combining the previous two inequalities,
\begin{equation*}
    l(x_N)-l(x^*) \leq \frac{1}{N^2} \left(\frac{5L_l}{4} |x_0-x^*|^2\right)+ \frac{\mu}{2} |x^*-x_0|^2+(g(x_N)-g(x^*))\T (\lambda_\mu^*-\lambda^*).
\end{equation*}
The result follows from the choice of $\mu$ (i.e., $\mu=\mathcal{O}(\mathrm{log}(N)^2/N^2)$), and the fact that $|\lambda_\mu^*-\lambda^*|\leq |x^*-x_0| c_1 \mu$.}
\end{proof}

\begin{figure}
\includegraphics[scale=.85]{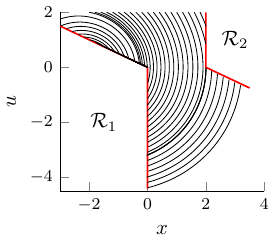}
\includegraphics[scale=.85]{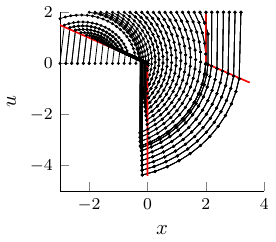}
\includegraphics[scale=.85]{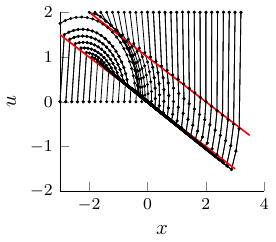}
\caption{The first panel shows trajectories resulting from \eqref{eq:mde1} (with parameters $\alpha=0.5, \delta=0.1, \beta=0, \epsilon=0$). The boundaries of $\mathcal{R}_1$ and $\mathcal{R}_2$ are highlighted in red. The second panel shows the results from the discretization \eqref{eq:dis1} with $T_k=T=0.1$, while the third panel shows the results from the discretization \eqref{eq:disMod} with $T_k=T=0.1$. An important difference between \eqref{eq:dis1} and \eqref{eq:disMod} lies in the fact that only violated constraints are considered in \eqref{eq:dis1}, whereas \eqref{eq:disMod} includes all constraints. This is indicated by the red lines, which denote $\mathcal{R}_1$, $\mathcal{R}_2$ in the second panel and $\gamma_1(x,u)=0$, $\gamma_2(x,u)=0$ in the third panel.}
\label{Fig:SimEx}
\end{figure}

\begin{figure}
\includegraphics[scale=.72]{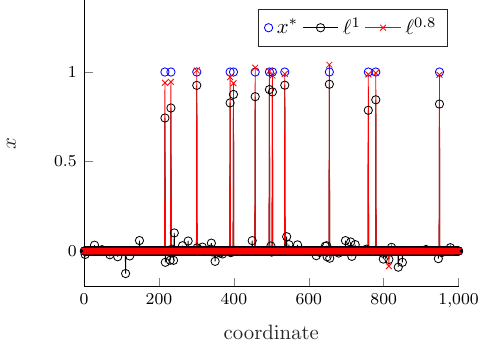}
%\includegraphics[scale=.78]{media/figure5.pdf}
%\newlength{\figurewidth}
%\newlength{\figureheight}
\setlength{\figurewidth}{.45\columnwidth}
\setlength{\figureheight}{.32\columnwidth}
\scalebox{0.8}{\input{media/resultCompressedSensingObj.tikz}}
\caption{ The left panel shows the solution vector of the compressed sensing problem with $\ell^1$ and $\ell^{0.8}$ regularization. The right panel shows the evolution of the objective function for the different methods. We note that Alg.~\ref{Alg:ImageDenoising}, Alg.~\ref{Alg:ImageDenoising2}, and accelerated projected gradients converge at a similar rate, which is much faster than gradient descent. We applied the following settings for Alg.~\ref{Alg:ImageDenoising} and Alg.~\ref{Alg:ImageDenoising2}: $\alpha_k=2/(k+3)$, $\delta_k=3/(2(k+3))$, $\beta_k=T(1-2\delta_k T)$ (see Tab.~\ref{Tab:params}) with $T=1.8$ and $T=2$, respectively. Accelerated gradient descent corresponds to the algorithm from \citet[p.~78, Constant Step Scheme I]{NesterovIntro}. The corresponding trajectories for Alg.~\ref{Alg:ImageDenoising} and Alg.~\ref{Alg:ImageDenoising2} for $p<1$ are similar to $p=1$ and are shown in Fig.~\ref{Fig:SimExCS2}.}
\label{Fig:SimExCS}
\end{figure}

\begin{figure}
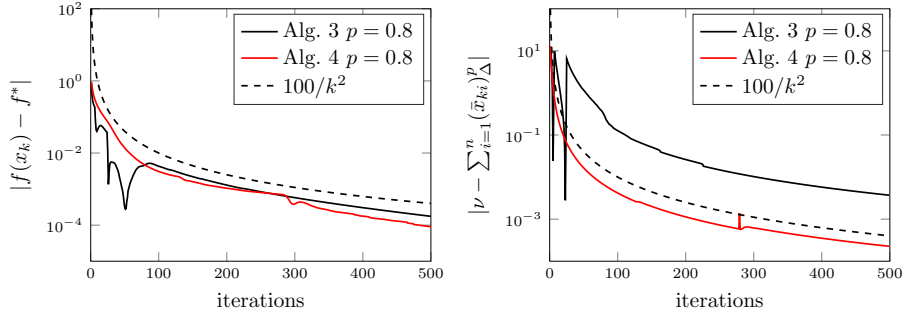

\setlength{\figurewidth}{.45\columnwidth}
\setlength{\figureheight}{.32\columnwidth}
\scalebox{0.8}{\input{media/resultCompressedSensingObjp0p8.tikz}}
\scalebox{0.8}{\input{media/resultCompressedSensingConstp0p8.tikz}}
\caption{ The figure shows the trajectories of Alg.~\ref{Alg:ImageDenoising} and Alg.~\ref{Alg:ImageDenoising2} applied to the compressed sensing problem with $\ell^{0.8}$ regularization. The left panel shows the evolution of the objective function for the different methods, whereas the right panel shows the value of the constraint violation. We applied the following settings for Alg.~\ref{Alg:ImageDenoising} and Alg.~\ref{Alg:ImageDenoising2}: $\alpha_k=2/(k+3)$, $\delta_k=3/(2(k+3))$, $\beta_k=T(1-2\delta_k T)$ (see Tab.~\ref{Tab:params}) with $T=1$ and $\Delta=1e-3$.}
\label{Fig:SimExCS2}
\end{figure}

\section{Numerical Examples}\label{Sec:NumEx}
The following section is divided into two parts. The first part illustrates the dynamics of \eqref{eq:mde1} and the discretization via \eqref{eq:dis1} and \eqref{eq:disMod} on a one-dimensional example and is intended to provide insights concerning the non-smooth dynamics, as well as the discretization. The second part applies \eqref{eq:dis1} and \eqref{eq:disMod} to (nonconvex) compressed sensing and large-scale sparse regression problems. As we will see, our algorithm recovers state-of-the-art performance for convex relaxations, while also handling nonconvex sparsity constraints in a seamless manner (traditional projection-based methods cannot be easily extended to this setting). 

\subsection{Illustrative example}
In order to plot trajectories in the phase space we choose $f(x)=(x+2)^2/2$ and $g(x)=(x,-x+2)$, where $x$ is scalar. Each constraint $g_i(x)\geq 0$ and its corresponding velocity constraint $\gamma_i(x)\geq 0$ induces a region,
\begin{equation*}
\mathcal{R}_i:=\{(x,u)\in \mathbb{R}^2~|~g_i(x)\leq 0, \gamma_i(x,u)\leq 0\},
\end{equation*}
in the phase space, $i=1,2$, where trajectories are either non-smooth or slide along the boundary of $\mathcal{R}_i$. Outside of $\mathcal{R}_i$, the trajectories follow the smooth motion \eqref{eq:diff1}. The first panel in Fig.~\ref{Fig:SimEx} shows the continuous-time trajectories \eqref{eq:mde1} along with $\mathcal{R}_1$ and $\mathcal{R}_2$. For a given $(x(t_0),u(t_0)^-)$ an impact happens if $g_i(x(t_0))\leq 0$ and $\gamma_i(x(t_0),u(t_0))^-<0$, which ensures that $\gamma_i(x(t_0),u(t_0))^+\geq -\epsilon \gamma_i(x(t_0),u(t_0))^-$. In our example only the case $\gamma_i^+=-\epsilon \gamma_i^-$ occurs, as there are no impacts where more than one constraint participates ($\mathcal{R}_1$ and $\mathcal{R}_2$ are disjoint). The coefficient of restitution $\epsilon$ therefore determines the velocity after impact. For $\epsilon=0$ trajectories end up at the boundary of the set $\mathcal{R}_i$, whereas for $\epsilon >0$ they will leave $\mathcal{R}_i$ (in case of impact). If $g_i(x(t_0))\leq 0$, $\gamma_i(x(t_0),u(t_0))^-=0$, no impact occurs, ($u(t_0)=u(t_0)^-=u(t_0)^+$), and trajectories either leave $\mathcal{R}_i$ or slide along its boundary. This depends on the contribution of the unconstrained dynamics, that is, on the vector
%\begin{equation*}
$v_\text{uc}(t_0):=(u(t_0),-2\delta u(t_0) - \nabla f(x(t_0)+\beta u(t_0))).$
%\end{equation*}
If $v_\text{uc}(t_0)$ points outwards, trajectories will leave $\mathcal{R}_i$ and follow the unconstrained motion ($\diff \lambda_i=0$). If $v_\text{uc}(t_0)$ points inwards, there will be a contribution from $\diff \lambda_i=\lambda_i(t_0) \dt$, which ensures that trajectories slide along the boundary of $\mathcal{R}_i$.

%In our example, an impact means that the trajectory jumps in the phase space along the $y$-direction, see Fig.~\ref{Fig:SimEx}. The velocity after impact is decided by the restitution coefficient $\epsilon$. For $\epsilon=0$ we end up at the boundary of the sets $\mathcal{R}_i$, whereas for $\epsilon>0$ the trajectories typically leave $\mathcal{R}_i$.

The second panel in Fig.~\ref{Fig:SimEx} shows the trajectories resulting from a discretization according to \eqref{eq:dis1} with $T_k=T=0.1$. We can clearly see the consequences of including constraints on the velocity level: Trajectories may become infeasible, since constraints enter \eqref{eq:dis1} only once they are violated. Nevertheless, even for large time steps $T_k=T$ (up to $T\approx 1.8$), trajectories converge to the unique minimizer of our problem. The third panel in Fig.~\ref{Fig:SimEx} shows trajectories from the discretization according to \eqref{eq:disMod} with $T=0.1$. We note that in \eqref{eq:disMod} each constraint is permanently active (that is, $I=[n_\text{g}]$ in \eqref{eq:defVa}), which explains the contrast between the trajectories in the second and third panel. In this example the dynamics \eqref{eq:disMod} are more robust with respect to large time steps, and in fact, convergence can still be observed for $T=3$.

\subsection{Nonconvex compressed sensing and image reconstruction}
We consider the following $\ell^p$-regularized inverse problem:
\begin{equation}
\min_{x\in \mathbb{R}^n} \frac{1}{2} |Ax-b|^2 \quad \text{s.t.} \quad |x|_p^p\leq \nu, \label{eq:inverse} 
\end{equation}
where $|x|_p$ refers to the $\ell^p$ ``norm" (we explicitly allow for $0<p\leq 1$). This has numerous applications in machine learning, statistics, and signal processing \citep[see, e.g.,][]{ElementsOfSL}. The traditional convex approach for solving such an inverse problem is to set $p=1$ and to leverage the fact that projections onto the $\ell^1$ ball have closed-form solutions. This yields both accelerated and non-accelerated gradient descent schemes, which in the setting of \eqref{eq:inverse} are also known under the name of iterative shrinkage-thresholding (ISTA) and fast iterative shrinkage-thresholding (FISTA) \citep[see, e.g.,][]{FISTA}. However, when $p<1$, projections onto the $\ell^p$ ``norm" ball no longer have closed-form solutions and it is unclear how to generalize projected gradient algorithms to this setting. Nonetheless the setting $p<1$ can be handled easily with our algorithms, as we highlight next.

In order to handle the absolute value and the $p$th power, we add slack variables, $\bar{x}\in \mathbb{R}^n$ and reformulate \eqref{eq:inverse} as
\begin{equation}
\min_{(x,\bar{x})\in \mathbb{R}^{2n}} \frac{1}{2} |A x - b|^2 \quad \text{s.t.} \quad -\bar{x} \leq x \leq \bar{x}, \quad \sum_{i=1}^n (\bar{x}_i)_\Delta^p \leq \nu, \label{eq:imp}
\end{equation}
where $(\cdot)_\Delta^p: \mathbb{R} \rightarrow \mathbb{R}$ is continuously differentiable and approximates $x^p$ for $x>0$ ($x^p$, $p<1$ is nondifferentiable at the origin). The approximation $(\cdot)_\Delta^p$ depends on the approximation parameter $\Delta>0$ and is defined as
\begin{equation*}
(x)_\Delta^p:=\begin{cases} x^p -\Delta^p (1-p) & x\geq \Delta, \\
p \Delta^{p-1} x, & x< \Delta.
\end{cases}
\end{equation*}
For $p=1$ our approximation recovers the constraint $|x|_1\leq \nu$ exactly for any $\Delta>0$. The approximation is depicted in Fig.~\ref{Fig:NCi} for $p=0.6$ and $\Delta=0.01$ and highlights that $x^p$ is approximated well even for modest values of $\Delta$. In the numerical examples $\Delta$ is typically set to $10^{-6}$, which yields an excellent agreement between $x^p$ and $(x)^p_\Delta$.

Despite the fact that the constraint in \eqref{eq:imp} is nonlinear and nonconvex for $p<1$, the optimization in \eqref{eq:dis1} and \eqref{eq:disMod} can be carried out in closed form, which yields the two algorithms Alg.~\ref{Alg:ImageDenoising} and Alg.~\ref{Alg:ImageDenoising2} stated in App.~\ref{App:Example2}.

\paragraph{Nonconvex compressed sensing example:}
In the first example, each element of $A\in \mathbb{R}^{100 \times 1000}$ is sampled from a standard normal distribution. The vector $b$ is set to $A x^* + n/2$, where the components of $n\in \mathbb{R}^{100}$ are sampled from a standard normal and $x^*$ is a vector that contains zeros everywhere except for 13 randomly chosen entries that are set to one. This gives rise to a challenging and ill-conditioned optimization problem that includes 1000 decision variables. The left panel in Fig.~\ref{Fig:SimExCS} compares the results computed by our algorithm for $p=1$ and $p=0.8$, whereas the right panel (solid lines) compares our approach to projected gradient descent and accelerated projected gradient descent for $p=1$. We note: i) the quality of the reconstruction for $p=1$ is significantly worse compared to $p=0.8$ (the parameter $\nu$ was tuned with five-fold cross validation, which yielded $\nu\approx 13$ in both cases) and ii) our algorithms decrease the objective function at a similar rate as accelerated projected gradient for $p=1$. Aside from the computation of the gradient, all algorithms have the same $\mathcal{O}(n \text{log}(n))$ complexity  per iteration, which is determined by a sorting operation that is used for projecting the iterates onto the $\ell^1$ ball or solving the minimization in \eqref{eq:dis1} or \eqref{eq:disMod}, respectively. It is important to note that for both our algorithms the per-iteration complexity is independent of $p$. Fig.~\ref{Fig:SimExCS2} shows the trajectories of Alg.~\ref{Alg:ImageDenoising} and Alg.~\ref{Alg:ImageDenoising2} for $p=0.8$ and highlights a convergence rate, both in terms of function value and in terms of constraint violation, on the order of $1/k^2$ also in the nonconvex case.

\paragraph{Nonconvex image reconstruction example:}
The second example consists of an image reconstruction problem taken from \citet{FISTA}, where $A=RW\in \mathbb{R}^{n\times n}, n=65536$, with $R$ representing a Gaussian blur operator, $W$ the inverse of a three-stage Haar wavelet transform, and $\nu=6\cdot 10^{3}$. The problem given by \eqref{eq:imp} is of considerable size and includes 131,072 decision variables and 131,073 constraints. Similar to the previous example, our approach is on par with the performance of accelerated gradient descent for $p=1$, as shown in Fig.~\ref{Fig:denoising}. However, we are also able to solve problems with $p<1$, as highlighted in Fig.~\ref{Fig:Outimage}. Fig.~\ref{Fig:Outimage} compares the resulting reconstruction of accelerated gradient descent with $p=1$ compared to our reconstruction $p=0.8$ after 100 iterations, whereby the latter has visibly fewer artifacts.
%
%Summarizing, our approach not only achieves similar quality as accelerated gradients for $p=1$ (clearly outperforming gradient descent) but is also able to handle nonconvex relaxations ($p<1$). 
We found that the choice of the damping parameters $\delta_k$ and $\beta_k$ has a significant effect on the convergence rate. Our choice $\delta_k=3/(2(k+3)), \beta_k=1-2\delta_k$, $\alpha_k=2/(k+3)$, and $T=1$ as motivated in Tab.~\ref{Tab:params}, is not optimized and a different schedule might lead to additional improvements.

Further details, including the derivation of Alg.~\ref{Alg:ImageDenoising} and Alg.~\ref{Alg:ImageDenoising2} from \eqref{eq:dis1} and \eqref{eq:disMod}, are included in App.~\ref{App:Example2}.

\begin{figure}
\setlength{\figurewidth}{.45\columnwidth}
\setlength{\figureheight}{.32\columnwidth}
\scalebox{0.8}{
\input{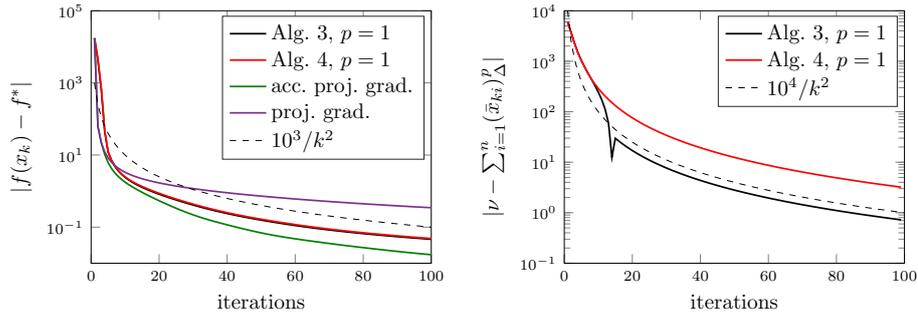}
}
\scalebox{0.8}{
% This file was created by matlab2tikz.
%
%The latest updates can be retrieved from
%  http://www.mathworks.com/matlabcentral/fileexchange/22022-matlab2tikz-matlab2tikz
%where you can also make suggestions and rate matlab2tikz.
%
\definecolor{mycolor1}{rgb}{0.00000,0.44700,0.74100}%
\definecolor{mycolor2}{rgb}{0.85000,0.32500,0.09800}%
\begin{tikzpicture}

\begin{axis}[%
width=0.951\figurewidth,
height=\figureheight,
at={(0\figurewidth,0\figureheight)},
scale only axis,
xmin=0,
xmax=100,
xlabel style={font=\color{white!15!black}},
xlabel={iterations},
xlabel near ticks,
ymode=log,
ymin=0.1,
ymax=10000,
yminorticks=true,
ylabel style={font=\color{white!15!black}},
ylabel={$|\nu-\sum_{i=1}^n (\bar{x}_{ki})_\Delta^p|$},
ylabel near ticks,
axis background/.style={fill=white},
legend style={legend cell align=left, align=left, draw=white!15!black}
]
\addplot [color=black,thick]
  table[row sep=crcr]{%
1	6000\\
2	3742.41661699306\\
3	2283.97441468772\\
4	1490.66887854023\\
5	1047.53132100811\\
6	769.377794009609\\
7	578.606917285877\\
8	439.127620024747\\
9	331.570412779449\\
10	245.20761393658\\
11	173.663809851613\\
12	112.683621682394\\
13	59.6033339236383\\
14	12.5396525941114\\
15	29.8431165285474\\
16	26.3321616428659\\
17	23.4063659047446\\
18	20.9425379147801\\
19	18.848284123306\\
20	17.053209444893\\
21	15.502917677159\\
22	14.1548378791587\\
23	12.9752680558814\\
24	11.937246611431\\
25	11.0189968720919\\
26	10.2027748815461\\
27	9.47400524715828\\
28	8.82062557493539\\
29	8.23258386993347\\
30	7.70144942672526\\
31	7.22010883754029\\
32	6.78252648375446\\
33	6.38355433765592\\
34	6.01877980406747\\
35	5.68440314829243\\
36	5.37713811325851\\
37	5.09413084413357\\
38	4.83289336494348\\
39	4.59124869669587\\
40	4.367285345619\\
41	4.15931937680609\\
42	3.96586266160912\\
43	3.78559617697342\\
44	3.61734745799641\\
45	3.46007148156664\\
46	3.31283439725936\\
47	3.17479963069486\\
48	3.04521597229729\\
49	2.92340733340272\\
50	2.808763908572\\
51	2.70073452747315\\
52	2.59882001698529\\
53	2.50256742376733\\
54	2.41156497199582\\
55	2.32543765156475\\
56	2.24384334800693\\
57	2.16646943946711\\
58	2.09302979742632\\
59	2.02326213752695\\
60	1.95692567399973\\
61	1.89379903935394\\
62	1.83367843494882\\
63	1.77637598384038\\
64	1.72171826128583\\
65	1.66954498060832\\
66	1.61970781699751\\
67	1.57206935182049\\
68	1.52650212421918\\
69	1.48288777783499\\
70	1.44111629110023\\
71	1.40108528302363\\
72	1.36269938486228\\
73	1.32586967176336\\
74	1.29051314717257\\
75	1.25655227487947\\
76	1.22391455346818\\
77	1.19253212900536\\
78	1.16234144221371\\
79	1.1332829061356\\
80	1.10530061215533\\
81	1.07834206065728\\
82	1.05235791461509\\
83	1.02730177379873\\
84	1.00312996734302\\
85	0.979801363455519\\
86	0.957277194176485\\
87	0.93552089431814\\
88	0.914497952859319\\
89	0.894175776142278\\
90	0.874523561255222\\
91	0.855512179506608\\
92	0.837114068113611\\
93	0.819303130493604\\
94	0.802054643554258\\
95	0.785345171796789\\
96	0.769152487850838\\
97	0.753455498327956\\
98	0.738234175111246\\
99	0.723469491596011\\
100	0\\
};
\addlegendentry{Alg.~3, $p=1$}

\addplot [color=red,thick]
  table[row sep=crcr]{%
1	6000\\
2	3742.41661699306\\
3	2281.9756873241\\
4	1485.89110055136\\
5	1039.51926404203\\
6	757.64939123519\\
7	568.237043426403\\
8	441.962144887195\\
9	353.569715909748\\
10	289.284313017086\\
11	241.070260847544\\
12	203.982528409456\\
13	174.84216720814\\
14	151.52987824704\\
15	132.588643466134\\
16	116.989979528928\\
17	103.991092914582\\
18	93.0446620814817\\
19	83.7401958733286\\
20	75.7649391234723\\
21	68.8772173849785\\
22	62.8878941341165\\
23	57.6472362896037\\
24	53.0354573864452\\
25	48.9558068182546\\
26	45.3294507576469\\
27	42.0916328463851\\
28	39.188761615608\\
29	36.5761775079024\\
30	34.2164241203098\\
31	32.0778976127993\\
32	30.1337826059591\\
33	28.3612071585287\\
34	26.7405667494725\\
35	25.2549797078422\\
36	23.8898456695918\\
37	22.6324853711876\\
38	21.4718450957393\\
39	20.3982528409395\\
40	19.4032161170007\\
41	18.4792534447576\\
42	17.6197532845274\\
43	16.8188554079451\\
44	16.0713507231333\\
45	15.3725963438675\\
46	14.7184433079515\\
47	14.1051748367976\\
48	13.5294534148902\\
49	12.9882752782821\\
50	12.4789311497146\\
51	11.9989722593685\\
52	11.546180853343\\
53	11.1185445254186\\
54	10.714233815404\\
55	10.3315826077353\\
56	9.96907093729133\\
57	9.62530987049326\\
58	9.29902817996852\\
59	8.9890605739752\\
60	8.69433727646193\\
61	8.41387478368079\\
62	8.14676764768513\\
63	7.89218115869375\\
64	7.64934481535236\\
65	7.41754648760681\\
66	7.19612718948777\\
67	6.98447638978786\\
68	6.78202779876345\\
69	6.58825557593863\\
70	6.40267091183967\\
71	6.22481894206591\\
72	6.0542759573507\\
73	5.89064687744539\\
74	5.73356296071102\\
75	5.58267972491488\\
76	5.43767505673658\\
77	5.2982474911929\\
78	5.16411464332924\\
79	5.03501177724945\\
80	4.9106904988148\\
81	4.79091755984568\\
82	4.67547376322403\\
83	4.56415295934374\\
84	4.45676112499113\\
85	4.35311551740641\\
86	4.25304389632296\\
87	4.15638380777537\\
88	4.06298192446336\\
89	3.97269343725546\\
90	3.88538149357962\\
91	3.80091667850674\\
92	3.71917653488533\\
93	3.6400451192641\\
94	3.56341259043993\\
95	3.48917482816266\\
96	3.41723307913216\\
97	3.34749362855024\\
98	3.27986749462093\\
99	3.21427014473767\\
100	0\\
};
\addlegendentry{Alg.~4, $p=1$}

\addplot [color=black, dashed]
  table[row sep=crcr]{%
1	10000\\
2	2500\\
3	1111.11111111111\\
4	625\\
5	400\\
6	277.777777777778\\
7	204.081632653061\\
8	156.25\\
9	123.456790123457\\
10	100\\
11	82.6446280991736\\
12	69.4444444444444\\
13	59.1715976331361\\
14	51.0204081632653\\
15	44.4444444444444\\
16	39.0625\\
17	34.6020761245675\\
18	30.8641975308642\\
19	27.7008310249307\\
20	25\\
21	22.6757369614512\\
22	20.6611570247934\\
23	18.9035916824197\\
24	17.3611111111111\\
25	16\\
26	14.792899408284\\
27	13.7174211248285\\
28	12.7551020408163\\
29	11.8906064209275\\
30	11.1111111111111\\
31	10.4058272632674\\
32	9.765625\\
33	9.18273645546373\\
34	8.65051903114187\\
35	8.16326530612245\\
36	7.71604938271605\\
37	7.30460189919649\\
38	6.92520775623269\\
39	6.57462195923734\\
40	6.25\\
41	5.94883997620464\\
42	5.66893424036281\\
43	5.40832882639265\\
44	5.16528925619835\\
45	4.93827160493827\\
46	4.72589792060491\\
47	4.52693526482571\\
48	4.34027777777778\\
49	4.1649312786339\\
50	4\\
51	3.84467512495194\\
52	3.69822485207101\\
53	3.55998576005696\\
54	3.42935528120713\\
55	3.30578512396694\\
56	3.18877551020408\\
57	3.07787011388119\\
58	2.97265160523187\\
59	2.87273771904625\\
60	2.77777777777778\\
61	2.68744961031981\\
62	2.60145681581686\\
63	2.51952632905014\\
64	2.44140625\\
65	2.36686390532544\\
66	2.29568411386593\\
67	2.22766763198931\\
68	2.16262975778547\\
69	2.10039907582441\\
70	2.04081632653061\\
71	1.98373338623289\\
72	1.92901234567901\\
73	1.87652467629949\\
74	1.82615047479912\\
75	1.77777777777778\\
76	1.73130193905817\\
77	1.68662506324844\\
78	1.64365548980934\\
79	1.60230732254446\\
80	1.5625\\
81	1.52415790275873\\
82	1.48720999405116\\
83	1.45158949049209\\
84	1.4172335600907\\
85	1.3840830449827\\
86	1.35208220659816\\
87	1.32117849121416\\
88	1.29132231404959\\
89	1.26246686024492\\
90	1.23456790123457\\
91	1.20758362516604\\
92	1.18147448015123\\
93	1.15620302925194\\
94	1.13173381620643\\
95	1.10803324099723\\
96	1.08506944444444\\
97	1.06281220108407\\
98	1.04123281965848\\
99	1.02030405060708\\
100	1\\
};
\addlegendentry{$10^4/k^2$}

\end{axis}
\end{tikzpicture}%
}
\caption{ The figure on the left shows the decrease in the objective function as a function of the iterations for the different algorithms. We note that Alg.~\ref{Alg:ImageDenoising}, Alg.~\ref{Alg:ImageDenoising2}, and accelerated projected gradients converge at a similar rate, which is substantially faster than gradient descent. We applied the following settings for Alg.~\ref{Alg:ImageDenoising} and Alg.~\ref{Alg:ImageDenoising2}: $\alpha_k=2/(k+3)$, $\delta_k=3/(2(k+3))$, $\beta_k=T(1-2\delta_k T)$ (see Tab.~\ref{Tab:params}) with $T=1$. Accelerated gradient descent corresponds to the algorithm by \citet[p.~78, Constant Step Scheme I]{NesterovIntro}. The figure on the right shows how constraint violations decrease as a function of the number of iterations. The black dashed line indicates a rate of $\mathcal{O}(1/k^2)$ as a reference. The corresponding trajectories of Alg.~\ref{Alg:ImageDenoising} and Alg.~\ref{Alg:ImageDenoising2} for $p<1$ are similar to $p=1$.}
\label{Fig:denoising}
\end{figure}

\begin{figure}
    \begin{minipage}[l]{.25\columnwidth}
    \centering
    \includegraphics[width=\textwidth]{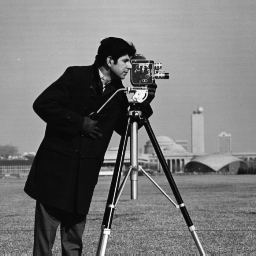}
    \end{minipage}%
    \begin{minipage}{.25\columnwidth}
    \centering
    \includegraphics[width=\textwidth]{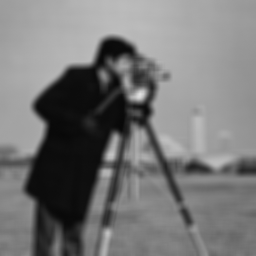}
    \end{minipage}%
    \begin{minipage}{.25\columnwidth}
    \includegraphics[width=\textwidth]{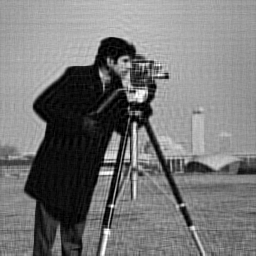}
    \end{minipage}%
    \begin{minipage}{.25\columnwidth}
    \includegraphics[width=\textwidth]{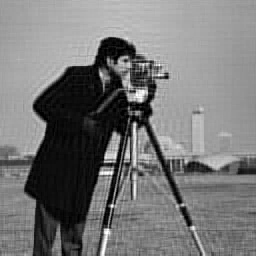}
    \end{minipage}
    \caption{From left to right: Original image, blurred and noisy image, output after 100 iterations with accelerated gradient descent ($p=1$), output after 100 iterations with Alg.~\ref{Alg:ImageDenoising2} ($p=0.8$). Both the results with $p=1$ and $p=0.8$ lead to reasonable reconstructions, however, $p=0.8$ has fewer artifacts.}
    \label{Fig:Outimage}
\end{figure}

%An additional numerical example that compares the iteration complexity between \eqref{eq:dis1}, projected gradients, and Frank-Wolfe on a nonconvex problem is included in App.~\ref{App:Example2}. In addition, App.~\ref{App:Example2} also applies \eqref{eq:dis1} to an image reconstruction problem, which is formulated as a $\ell_p$-regularized inverse problem with $0<p\leq 1$. The inverse problem is of considerable size and includes 65536 decision variables.
\vspace*{-6pt}\section{Conclusion}\label{Sec:Conc}
\vspace*{-2pt}We have introduced a new type of accelerated optimization algorithm for constrained optimization problems. By imposing constraints on velocities, rather than on positions, the algorithms avoid projections or optimizations over the entire feasible set at each iteration. This not only has the potential to reduce execution time compared to Frank-Wolfe or projected gradient schemes, but more importantly, expands the range of potential applications, as constraints are not necessarily required to be convex or to have a simple structure. We have highlighted important analogies to non-smooth dynamical systems, and characterized the algorithm's behavior in continuous and discrete time.

%There are many opportunities for future research, including the derivation of convergence rates in discrete time or studying the performance on applied machine learning problems. We hope that our perspective helps to trigger further developments at the intersection between non-smooth dynamics, constrained optimization, and machine learning.

\backmatter

%\bmhead{Supplementary information}
%
%If your article has accompanying supplementary file/s please state so here. 
%
%Authors reporting data from electrophoretic gels and blots should supply the full unprocessed scans for key as part of their Supplementary information. This may be requested by the editorial team/s if it is missing.
%
%Please refer to Journal-level guidance for any specific requirements.

\bmhead{Acknowledgments}
Michael Muehlebach thanks the German Research Foundation and the Branco Weiss Fellowship, administered by ETH Zurich, for the support.
This work was also funded by the European Union (ERC-2022-SYG-OCEAN-101071601). Views and opinions expressed are however those of the author(s) only and do not necessarily reflect those of the European Union or the European Research Council
Executive Agency. Neither the European Union nor the granting authority can be held responsible for them.

\ArxivToggle{
\section*{Declarations}

%Some journals require declarations to be submitted in a standardised format. Please check the Instructions for Authors of the journal to which you are submitting to see if you need to complete this section. If yes, your manuscript must contain the following sections under the heading `Declarations':

\begin{itemize}
\item Funding: Michael Muehlebach is supported by the German Research Foundation (Emmy Noether grant) and the Branco Weiss Fellowship, administered by ETH Zurich. Michael Jordan is supported by the European Research Council in the form of an ERC Synergy Grant.
\item Conflict of interest: There are no conflicts of interests.
\item Ethics approval: not applicable
\item Consent to participate: not applicable
\item Consent for publication: not applicable
\item Availability of data and materials: not applicable
\item Code availability: not applicable
\item Authors' contributions: The research was mainly developed by Michael Muehlebach with inputs from Michael Jordan. Michael Jordan edited and improved the final manuscript.
\end{itemize}
}
%\noindent
%If any of the sections are not relevant to your manuscript, please include the heading and write `Not applicable' for that section. 

%%===================================================%%
%% For presentation purpose, we have included        %%
%% \bigskip command. please ignore this.             %%
%%===================================================%%
\newpage
\begin{appendices}

\section{Supplementary Material of 
Sec.~\ref{Sec:GF}}\label{App:Sec1}
\subsection{Non-asymptotic Rates for Gradient Descent}
\hchange{The following section contains a non-asymptotic convergence result for the gradient descent algorithm introduced in \cite{ownWorkC}, which only contained an asymptotic analysis. The pseudo-code of the algorithm is listed in Alg.~\ref{alg:CGDV} and requires knowledge of the smoothness constant $L_l$ of the Lagrangian. Sec.~\ref{Sec:GF2} below provides results in the situation where $L_l$ is not known.}

\begin{algorithm}
\caption{Constrained gradient descent with velocity projections}\label{alg:CGDV}
\begin{algorithmic}
\Require $x_0\in \mathbb{R}^n$, $L_l,\mu$ (smoothness and strong convexity constant of $l$)
\State $T \leq 1/L_l$, $\alpha \leq \mu$
\For{$k=0,1,\dots$}
    \State $x_{k+1}\gets \hchange{x_k+}T \argmin_{v\in V_\alpha(x_k)} |v+\nabla f(x_k)|^2$ (with $I=[n_\text{g}]$)
\EndFor
\end{algorithmic}
\end{algorithm}

\begin{theorem}
Let the function $f$ be $\mu$-strongly convex and \hchange{smooth}, and $g$ be smooth and concave. Let $l(x):=f(x)-{\lambda^*}\T g(x)$ denote the Lagrangian \hchange{with smoothness constant $L_l$}, where $\lambda^*$ denotes an optimal multiplier of \eqref{eq:fundProb}, and let $T\leq 1/L_l$, $\alpha\leq \mu$. Then, the iterates of Alg.~\ref{alg:CGDV} satisfy:
\begin{equation*}
\frac{\mu}{2} |x_k-x^*|^2 ~~ \leq ~~ l(x_k)-l(x^*) ~~\leq~~ (1-T \alpha)^k ~~(l(x_0)-l(x^*)),
\end{equation*} 
for all $k\geq 0$, where $x^*$ denotes the minimizer of \eqref{eq:fundProb}. For $T=1/L_l$ and $\alpha=\mu$, the convergence is linear at rate \hchange{$\mu/L_l$}. \label{Thm:GDV}
\end{theorem}

\begin{proof}
The proof hinges on the fact that $\alpha (x^*-x_k) \in V_\alpha(x_k)$, which follows from concavity of $g$. We define $v_k:=(x_{k+1}-x_k)/T$ to be the velocity. From the fact that $v_k=\argmin_{v\in V_\alpha(x_k)} |v+\nabla f(x_k)|^2$ we conclude
\begin{align*}
|v_k+\nabla f(x_k)|^2/2 \leq |\alpha (x^*-x_k) + \nabla f(x_k)|^2/2, 
\end{align*}
which can be rearranged to
\begin{align}
|v_k|^2/2+\nabla f(x_k)\T v_k &\leq \alpha^2 |x^*-x_k|^2/2 + \alpha \nabla f(x_k)\T (x^*-x_k)\nonumber\\
&\leq (\alpha^2-\alpha\mu) |x^*-x_k|^2/2 + \alpha (f(x^*)-f(x_k))\nonumber\\
&\leq \alpha (f(x^*)-f(x_k)), \label{eq:app2}
\end{align}
where we have used the strong convexity of $f$ in the second step and the fact that $\alpha \leq \mu$ in the third step. Next we use the smoothness of $l$, which yields
\begin{align*}
l(x_{k+1})-l(x_k) &\leq T \nabla l(x_k)\T v_k + \frac{T^2 L_l}{2} |v_k|^2\\
&\leq T \nabla f(x_k)\T v_k - T {\lambda^*}\T \nabla g(x_k)\T v_k + \frac{T^2 L_l}{2} |v_k|^2\\
&\leq \alpha T (f(x^*)-f(x_k))- T {\lambda^*}\T \nabla g(x_k)\T v_k + T \frac{TL_l - 1}{2} |v_k|^2, 
\end{align*}
where we used inequality \eqref{eq:app2} in the third step. We further note that by definition of $V_\alpha(x_k)$, $v_k\in V_\alpha (x_k)$ satisfies $\nabla g(x_k)\T v_k \geq - \alpha g(x_k)$ and therefore $-{\lambda^*}\T \nabla g(x_k)\T v_k \leq - \alpha {\lambda^*}\T g(x_k)$.  In addition, complementary slackness implies ${\lambda^*}\T g(x^*)=0$ and therefore 
\begin{equation*}
l(x_{k+1})-l(x_k)\leq \alpha T (l(x^*)-l(x_k)) +T \frac{T L_l-1}{2} |v_k|^2. 
\end{equation*}
Subtracting and adding $l(x^*)$ on the left-hand side and using $T\leq 1/L_l$ yields
\begin{equation*}
\frac{\mu}{2} |x_k-x^*|^2 ~~ \leq ~~ l(x_{k}) - l(x^*) ~~ \leq ~~ (1-\alpha T)^k ~~ (l(x_0)-l(x^*)),
\end{equation*}
which concludes the proof.
\end{proof}

\subsection{Smoothness-agnostic Rates}\label{Sec:GF2}
Thm.~\ref{Thm:GDV} requires knowledge of $L_l$, the smoothness constant of the Lagrangian, which requires a bound on $\lambda^*$. This is further discussed in App.~\ref{App:Sec1}, where we also add a corollary that shows that a $1/k$ rate is obtained if the smoothness constant $L_l$ is unknown. 
The application of Thm.~\ref{Thm:GDV} requires knowledge of the smoothness constant $L_l$ of $l$, which may require a bound on the optimal dual multiplier $\lambda^*$ if $g$ is nonlinear. This can be viewed as a quantitative constraint qualification, since the multiplier $\lambda^*$ can be bounded by
\begin{equation*}
|\lambda^*|\leq \sup_{x\in C} \frac{|W(x)\T \nabla f(x)|}{\underline{\sigma}(W(x))}, \quad W(x):=(\nabla g_i(x))_{i\in I_x},
\end{equation*}
where $\underline{\sigma}$ denotes the minimum singular value and $W(x)$ the matrix with columns $\nabla g_i(x)$, $i\in I_x$. 

If such a quantitative constraint qualification is not available, the following corollary of Thm.~\ref{Thm:GDV} still applies and characterizes the convergence of Alg.~\ref{alg:CGDV}:
\begin{corollary}
Let the function $f$ be $\mu$-strongly convex and $L$-smooth, let $g$ be concave and $L_\text{g}$-smooth, and let 
\begin{equation*}
B:=\max_{x\in C} |\nabla f(x)|^2/(2\mu).
\end{equation*}
Then, the iterate $x_N$ of Alg.~\ref{alg:CGDV} with $T=1/(L+B L_g/\varepsilon)$, $\alpha=\mu$ satisfies $|x_N-x^*|\leq \varepsilon$, where
\begin{equation*}
N\geq\frac{L+B L_\text{g}/\varepsilon}{\mu} \left( 2\log(1/\varepsilon) + \log(\frac{L+B L_\text{g}/\varepsilon}{\mu}) + 2 \log(|x^*-x_0|)\right) = \mathcal{O}\left(\frac{\log(1/\varepsilon)}{\varepsilon}\right)
\end{equation*}
and $x^*$ denotes the minimizer of
\begin{equation*}
\min_{x\in \mathbb{R}^n} f(x) \quad \text{s.t.}\quad g(x)\geq - \varepsilon.
\end{equation*}
\end{corollary}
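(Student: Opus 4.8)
The plan is to reduce the claim to Theorem~\ref{Thm:GDV} applied to the $\epsilon$-relaxed problem $\min_x f(x)$ subject to $g(x)\ge -\epsilon$ (equivalently, to Alg.~\ref{alg:CGDV} run with each $g_i$ replaced by $g_i+\epsilon$). This relaxed problem satisfies all hypotheses of Theorem~\ref{Thm:GDV}: $f$ is $\mu$-strongly convex, and $g$ (hence $g+\epsilon$) is smooth and concave. The only ingredient that is not already given is an explicit value of $L_l$, i.e.\ a bound on the optimal multiplier $\lambda^*$ of the relaxed problem; strong convexity of $f$ is precisely what makes such a bound available, and it is what produces the $1/\epsilon$ scaling in $N$.

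First I would bound $\lambda^*$. Since the original problem is feasible, its minimizer $x_0^*$ satisfies $g(x_0^*)\ge 0 > -\epsilon$, so Slater's condition holds for the relaxed problem and strong duality provides an optimal multiplier $\lambda^*\ge 0$ with $q(\lambda^*)=f(x^*)$, where $q$ is the dual function of the relaxed problem and $x^*$ is its minimizer. Evaluating the relaxed Lagrangian at $x_0^*$ and using $g(x_0^*)+\epsilon\ge \epsilon$ componentwise together with $\lambda^*\ge 0$,
\begin{equation*}
f(x^*)=q(\lambda^*)\le f(x_0^*)-\sum_{i} \lambda_i^*\bigl(g_i(x_0^*)+\epsilon\bigr)\le f(x_0^*)-\epsilon\,\|\lambda^*\|_1 .
\end{equation*}
Hence $\epsilon\,\|\lambda^*\|_1\le f(x_0^*)-f(x^*)\le f(x_0^*)-\min_x f(x)$, and the standard consequence of $\mu$-strong convexity, $f(x)-\min_x f\le |\nabla f(x)|^2/(2\mu)$, evaluated at $x_0^*\in C$ gives $f(x_0^*)-\min_x f\le B$. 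Therefore $\|\lambda^*\|_1\le B/\epsilon$, and since each $g_i$ is $L_\text{g}$-smooth and $-g_i$ is convex, the relaxed Lagrangian $l$ is convex and $L_l$-smooth with $L_l\le L+\|\lambda^*\|_1 L_\text{g}\le L+B L_\text{g}/\epsilon$.

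With $T=1/(L+BL_\text{g}/\epsilon)\le 1/L_l$ and $\alpha=\mu$, Theorem~\ref{Thm:GDV} applies to the relaxed problem and yields, for all $k\ge 0$,
\begin{equation*}
\tfrac{\mu}{2}|x_k-x^*|^2\le l(x_k)-l(x^*)\le (1-T\mu)^k\,\bigl(l(x_0)-l(x^*)\bigr).
\end{equation*}
Because $x^*$ is the global minimizer of the convex $L_l$-smooth function $l$ (here $\nabla l(x^*)=0$ by the KKT conditions of the relaxed problem, and convexity upgrades this to global optimality), we have $l(x_0)-l(x^*)\le \tfrac{L_l}{2}|x_0-x^*|^2\le \tfrac{1}{2}(L+BL_\text{g}/\epsilon)|x_0-x^*|^2$. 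Combining this with $(1-T\mu)^k\le e^{-T\mu k}$, the requirement $\tfrac{\mu}{2}|x_N-x^*|^2\le \tfrac{\mu}{2}\epsilon^2$ is guaranteed once $T\mu N\ge 2\log(1/\epsilon)+\log\bigl((L+BL_\text{g}/\epsilon)/\mu\bigr)+2\log|x^*-x_0|$; dividing by $T\mu$, and using $1/(T\mu)=(L+BL_\text{g}/\epsilon)/\mu$, gives exactly the stated lower bound on $N$. Since $1/(T\mu)=\Theta(1/\epsilon)$ while the bracketed term is $\mathcal{O}(\log(1/\epsilon))$, this is $\mathcal{O}(\log(1/\epsilon)/\epsilon)$.

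The main obstacle is the multiplier estimate $\|\lambda^*\|_1\le B/\epsilon$: it is what converts the implicit constant $L_l$ of Theorem~\ref{Thm:GDV} into the explicit, $\epsilon$-dependent quantity $L+BL_\text{g}/\epsilon$, and hence it drives the $1/\epsilon$ factor in $N$. The remaining points are routine but should be stated carefully: attainment of $x_0^*$, $x^*$, and $\lambda^*$ (coercivity of the strongly convex $f$ and Slater for the relaxed problem), the passage from $\nabla l(x^*)=0$ to global optimality of $x^*$ for $l$, and the precise norm of $\lambda^*$ entering the bound on $L_l$.
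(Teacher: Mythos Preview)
Your proof is correct and follows the same overall architecture as the paper's: establish $\|\lambda^*\|_1\le B/\epsilon$ for the $\epsilon$-relaxed problem, deduce $L_l\le L+BL_\text{g}/\epsilon$, and then invoke Theorem~\ref{Thm:GDV} together with $l(x_0)-l(x^*)\le \tfrac{L_l}{2}|x_0-x^*|^2$ and $(1-T\mu)^N\le e^{-T\mu N}$ to solve for $N$.

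The one genuine difference is how the multiplier bound is obtained. You use the classical Slater argument directly: the minimizer $x_0^*$ of the original problem is a strict Slater point for the relaxed problem with margin $\epsilon$, whence $q(\lambda^*)\le f(x_0^*)-\epsilon\|\lambda^*\|_1$, and strong convexity gives $f(x_0^*)-f(x^*)\le f(x_0^*)-\min_x f\le B$. The paper instead introduces an auxiliary penalized problem $\min_{x,\xi\ge 0} f(x)+\bar B\xi/\epsilon$ subject to $g(x)\ge -\xi$, shows that its dual constrains $|\lambda|_1\le \bar B/\epsilon$, proves $\xi^*(\bar B)\le \epsilon B/\bar B$ via the same strong-convexity estimate, and then appeals to the maximum theorem to find $B'\le B$ for which $x^*(B')$ solves the $\epsilon$-relaxed problem. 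Your route is shorter and avoids the continuity/maximum-theorem machinery; the paper's construction, on the other hand, makes the role of the penalty parameter $\bar B$ explicit and gives a family of relaxations indexed by $\bar B$, which may be of independent interest but is not needed for this corollary.
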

\begin{proof}
We start by proving the following claim:

\textit{Claim:} Let $\lambda^*$ be an optimal multiplier of $\min_{x\in \mathbb{R}^n} f(x)~\text{s.t.}~g(x)\geq -\varepsilon$. Then $\lambda^*$ satisfies $|\lambda^*|_1 \leq B/\varepsilon$.

\textit{Proof of the claim:} We consider the slightly modified problem
\begin{equation}
\min_{x\in \mathbb{R}^n, \xi\geq 0} f(x) + \bar{B} \xi/\varepsilon \quad \text{s.t.} \quad g(x)\geq - \xi,\label{eq:modprob}
\end{equation}
and denote its unique minimizer by $x^*(\bar{B}), \xi^*(\bar{B})$ (strong convexity concludes that the minimizer is unique). In the above minimization the variable $\xi$ can be interpreted as slackness, whereby the factor $\bar{B}/\varepsilon$ penalizes the magnitude of $\xi$. The larger the factor $\bar{B}/\varepsilon$, the smaller the amount of constraint violation by the resulting $x^*(\bar{B})$. The modified problem \eqref{eq:modprob} is motivated by the fact that the corresponding Lagrange dual problem is given by
\begin{equation*}
\max_{\lambda \geq 0, |\lambda|_1\leq \bar{B}/\varepsilon} ~ \min_{x\in \mathbb{R}^n} f(x) - \lambda\T g(x),
\end{equation*}
which implies that the minimizer $x^*(\bar{B})$ satisfies
\begin{equation}
x^*(\bar{B})=\argmin_{x\in \mathbb{R}^n} f(x) - \lambda^*(\bar{B})\T g(x), \quad \text{with}\quad |\lambda^*(\bar{B})|_1\leq \bar{B}/\varepsilon, \label{eq:dualApp}
\end{equation}
and where $\lambda^*(\bar{B})$ is an optimal dual multiplier. The maximum theorem implies that $x^*(\bar{B})$ and $\xi^*(\bar{B})$ are continuous functions of $\bar{B}$. We will show next that for $\bar{B}=B$ the amount of constraint violation $g(x^*(B))$ is guaranteed to be below $\varepsilon$. Combined with the continuity of $\xi^*(\bar{B})$, this implies that there exists some $B'\leq B$ for which the corresponding $x^*(B')$ satisfies $x^*(B')=\argmin_{x\in \mathbb{R}^n, g(x)\geq -\varepsilon} f(x)$. This in turn proves the claim in view of \eqref{eq:dualApp}. 

In order to bound the amount of constraint violation for $\bar{B}=B$, we note that the following holds for any $x\in C$
\begin{align*}
f(x^*(\bar{B})) + \bar{B} \xi^*(\bar{B})/\varepsilon \leq f(x) &\leq f(\hat{x}) + |\nabla f(x)|^2/(2\mu)\\
&\leq f(x^*(\bar{B})) +|\nabla f(x)|^2/(2\mu),
\end{align*}
where strong convexity of $f$ has been used in the second step and $\hat{x}$ denotes the unconstrained minimizer of $f$. Maximizing the right-hand side over $x\in C$ and rearranging terms yields
\begin{equation*}
\xi^*(\bar{B}) \leq \varepsilon B/\bar{B},
\end{equation*}
which establishes that $\xi^*(B)\leq \varepsilon$ and proves the claim.

We now turn to the proof of the corollary. As a result of the claim we conclude that the minimizer $x^*$ satisfies $x^*=\argmin_{x\in \mathbb{R}^n} f(x) - {\lambda^*}\T g(x)$, where the $\ell^1$ norm of $\lambda^*$ is bounded by $B/\varepsilon$. As a result of the choice of $T$ and the parameter $\alpha$, we conclude from Thm.~\ref{Thm:GDV} that
\begin{equation*}
\frac{\mu}{2} |x_N-x^*|^2 \leq \left(1-\frac{L+B L_\text{g}/\varepsilon}{\mu} \right)^N ~~\frac{L+B L_\text{g}/\varepsilon}{2} ~~ |x_0-x^*|^2,
\end{equation*}
where the smoothness of $f(x)-{\lambda^*}\T g(x)$ has been used to upper bound the right-hand side. The corollary follows from using the identity $1-(L+B L_\text{g}/\varepsilon)/\mu \leq \exp(- (L+B L_\text{g}/\varepsilon)/\mu)$, which yields
\begin{multline*}
|x_N-x^*|^2 \leq \frac{L+B L_\text{g}/\varepsilon}{\mu} |x_0-x^*|^2 \\
\exp\left( -2\log(1/\varepsilon) - \log(\frac{L+B L_\text{g}/\varepsilon}{\mu}) - 2 \log(|x^*-x_0|)\right) = \varepsilon^2
\end{multline*}
and proves the desired result.
\end{proof}

\section{Pseudo-code of Algorithm \eqref{eq:dis1}}\label{App:Alg}
This section lists the pseudo-code of the resulting discrete-time algorithm \eqref{eq:dis1}.

%\begin{algorithm}[h]
%\captionof{algorithm}{Discrete-time algorithm for nonconvex constrained optimization}
\begin{algorithmic}
\Require parameters $\beta\geq 0$, $\delta >0$ \hfill\Comment{typically below 1; can be time varying}
\Require step size $T_k$ \hfill\Comment{$1/T_k \sim$ smoothness of objective}
\Require constants $\alpha>0$, $0\leq \epsilon\leq 1$ \hfill\Comment{$\alpha$ such that $\alpha T_k\leq 1$}
\Require initial condition $x_0\in \mathbb{R}^n$, $u_0\in \mathbb{R}^n$ \hfill\Comment{usually $u_0=0$}
\Require $\epsilon_{\text{const}}\geq 0$ \hfill\Comment{constraint violation tolerance}
\State $x_k \gets x_0$, $u_k \gets u_0$
\For{$k=0,1,\dots$}
\State $r_k\gets u_k-2\delta T_k u_k - \nabla f(x_k+\beta u_k) T_k$  \hfill\Comment{unconstrained update}
\State $w\gets \{ \}$, $W \gets \{ \}$
\For{$i=1,\dots,n_\text{g}$} \Comment{check constraint violations}
\If{$g_i(x_k)\leq \epsilon_{\text{const}}$}
\State $w\gets (w, -\alpha g_i(x_k) - \epsilon \min\{ \nabla g_i(x_k)\T u_k + \alpha g_i(x_k), 0\})$ %\hfill\Comment{add violated constraint}
\State $W\gets (W, \nabla g_i(x_k)\T)$ \hfill\Comment{add violated constraint}
\EndIf
\EndFor
\State $u_{k+1} \gets \argmin_{v \in \mathbb{R}^n} |v-r_k|^2$~\text{s.t.}~$W v \geq w$ \hfill\Comment{solve \eqref{eq:dis1}}
\State $x_{k}\gets x_k + T_k u_{k+1}$ \hfill\Comment{update positions}
\State $u_k \gets u_{k+1}$ \hfill\Comment{update velocities}
%\If{$|u_{k+1}|\leq \epsilon_{\text{sol}}$} \Comment{more detailed criteria are recommended}
%\State \textbf{break}
%\EndIf
\EndFor
\end{algorithmic}
%\end{algorithm}

\section{Additional Details on the Numerical Experiments}\label{App:Example2}
We apply \eqref{eq:dis1} and \eqref{eq:disMod} to \eqref{eq:imp} and start by defining the constraint function $g$ as $g(x,\bar{x}):=(g_1(x,\bar{x}),g_2(x,\bar{x}))$, where $g_1(x,\bar{x}):=\bar{x}+x$ and $g_2(x,\bar{x}):=\bar{x}-x$. The decision variables $(x,\bar{x})$ will be represented with the iterates $x_k\in \mathbb{R}^n$, $\bar{x}_k\in \mathbb{R}^n$, and the corresponding velocities will be denoted by $u_k\in \mathbb{R}^n$, $\bar{u}_k\in \mathbb{R}^n$, where $k$ refers to the iteration number. As a result, the optimization in \eqref{eq:dis1}, which is used to determine the velocities $(u_{k+1},\bar{u}_{k+1})$ can be expressed as
\begin{align}
    (u_{k+1},\bar{u}_{k+1})=&\argmin_{(v,\bar{v})\in\mathbb{R}^{2n}} \frac{1}{2} |v-r_{k}|^2 + \frac{1}{2} |\bar{v}-\bar{r}_{k}|^2 \quad \text{s.t.} \label{eq:optitmp}\\
    &\quad\bar{v}_i+v_i+\tilde{g}_{1i} \geq 0, \quad \forall i\in I_{1k},\quad \bar{v}_j-v_j +\tilde{g}_{2j}\geq 0, \quad \forall j \in I_{2k},  \nonumber\\
    &\quad-\sum_{i=1}^n w_l \bar{v}_l + \tilde{g}_{3}\geq 0, \nonumber
\end{align}
where the last constraint only enters if $\sum_{i=1}^n (\bar{x}_{ki})_\Delta^p \geq \nu$ holds, and where $I_{ik}$ denotes the set of constraints in $g_i$ that are active at iteration $k$, that is,
\begin{equation*}
    I_{1k}:=\{i \in [n]~|~g_{1i}(x_k,\bar{x}_k)\leq 0\}, \quad I_{2k}:=\{i\in [n]~|~g_{2i}(x_k,\bar{x}_k)\leq 0\}. \end{equation*}
The variables $\tilde{g}_1$, $\tilde{g}_2$, $\tilde{g}_3$, and $w$ are defined as follows: 
$\tilde{g}_{1} := \alpha g_1(x_k,\bar{x}_k)$, $\tilde{g}_2:=\alpha g_2(x_k,\bar{x}_k)$, $\tilde{g}_3:=\alpha (\nu - \sum_{i=1}^n (\bar{x}_{ki})_\Delta^p)$, and $w_i:=\text{d} (x)^p_\Delta /\text{d}x |_{x=\bar{x}_{ki}}$. We have further set the constant $\epsilon=0$ when deriving \eqref{eq:optitmp} in order to simplify the exposition. Moreover, $r_{k}\in \mathbb{R}^n$ and $\bar{r}_{k}\in \mathbb{R}^n$ are given by
\begin{equation*}
    r_{k}:=u_k-2\delta T u_k - A\T (A (x_k+\beta u_k)-b) T,  \qquad 
    \bar{r}_{k}:=\bar{u}_k-2\delta T \bar{u}_k,
\end{equation*}
where $\delta>0$ and $\beta\geq 0$ are damping parameters and $T>0$ denotes the step size. In fact, the optimization in \eqref{eq:disMod} has the same form as \eqref{eq:optitmp} with slightly different definitions of $\tilde{g}_1, \tilde{g}_2,$ and $\tilde{g}_3$ and all constraints being permanently active, which means, in particular, $I_{1k}=[n]$ and $I_{2k}=[n]$. In both cases, we apply the following change of variables $(v,\bar{v})\rightarrow (\xi,\bar{\xi})$, 
\begin{equation*}
v=\xi-\bar{\xi} -\frac{\tilde{g}_1-\tilde{g}_2}{2}, \qquad \bar{v}=\xi+\bar{\xi} - \frac{\tilde{g}_1+\tilde{g}_2}{2},
\end{equation*}
which transforms \eqref{eq:optitmp} to a projection onto a weighted simplex. The projection onto a weighted simplex has a closed-form solution and takes the form
\begin{align}
\argmin_{(\xi,\bar{\xi})\in \mathbb{R}^{2n}}& \frac{1}{2} |\xi-q_1|^2+\frac{1}{2}|\bar{\xi}-q_2|^2 \quad \text{s.t.} \label{eq:optitmp2}\\
&\xi_i \geq 0,~\forall i\in I_{1k}, \quad \bar{\xi}_j \geq 0,~\forall j\in I_{2k},\quad \sum_{i=1}^n w_i \xi_i + \sum_{i=1}^n w_i \bar{\xi}_i \leq \bar{\nu},\nonumber
\end{align}
where the last constraint only enters if $\sum_{i=1}^n (\bar{x}_{ki})_\Delta^p \geq \nu$ and
where $q_1$, $q_2$, and $\bar{\nu}$ arise from the change of variables. The weights $w_i$ are given by $\text{d}(x)_\Delta^p/\text{d}x$ and are therefore nonnegative by construction of $(\cdot)_\Delta^p$. The closed-form solution of \eqref{eq:optitmp2} is summarized in Alg.~\ref{Alg:weightedl1} below. The solution hinges on the fact that the constraint $\sum_{i=1}^n w_i \xi_i + \sum_{i=1}^n w_i \bar{\xi}_i \leq \bar{\nu}$ can be included in the objective function by means of a dual multiplier, at which point the resulting minimization corresponds to a simple projection on $\mathbb{R}_{\geq 0}$ for each $i\in I_{1k}$ and $i\in I_{2k}$. The optimal multiplier is computed by an iterative procedure that is based on a sorting operation. The worst-case complexity of Alg.~\ref{Alg:weightedl1} is therefore $\mathcal{O}(n \text{log}(n))$.

The resulting algorithms for solving the minimization in \eqref{eq:dis1} and \eqref{eq:disMod} are summarized in Alg.~\ref{Alg:ImageDenoising} and Alg.~\ref{Alg:ImageDenoising2}, respectively.

\begin{algorithm}
\caption{Projection onto the weighted simplex: \\
$\argmin_{\xi\in \mathbb{R}^n} |\xi-q|^2 ~~\text{s.t.}~~\xi_i\geq 0, \forall i\in I,~~w\T \xi \leq \bar{\nu}$}
\label{Alg:weightedl1}
\begin{algorithmic}
\Require $q\in \mathbb{R}^n$, index set $I$, weight $w\in \mathbb{R}^n$, and bound $\bar{\nu}\in \mathbb{R}$
\Require $w\geq 0$ and $\bar{\nu}\geq 0$
\State $\xi_i \gets \begin{cases} 0,  &\text{if} ~ i\in I~\text{and}~q_i< 0,\\
q_i, &\text{else}. \end{cases}$ \hfill\Comment{suppose constraint $w\T \xi\leq \bar{\nu}$ is not active}
\If{$w\T\xi > \bar{\nu}$} \hfill\Comment{constraint $w\T \xi\leq \bar{\nu}$ is active}
%\State $T_1\gets \sum_{i\not\in I} w_i q_i$,~~$T_2\gets \sum_{i\not\in I} w_i^2$
\State $q_\text{s}/w_\text{s} \gets \text{sort}(\{q_i/w_i\}_{i\in I}, \text{descending})$ \hfill\Comment{sort s.t. $q_{\text{s}1}/w_{\text{s}1} \geq q_{\text{s}2}/w_{\text{s}2} \geq \dots$}
%\State $S_1 \gets 0$, $S_2\gets 0$
\State $m\gets 1$\hfill\Comment{compute multiplier $\lambda$ for $w\T \xi \leq \bar{\nu}$}
\While{$m \leq |I|$}
\If{$\sum_{i\not\in I} w_i q_i +\sum_{j<m} w_{\text{s}j} q_{\text{s}j} -(\sum_{i\not\in I} w_i^2 + \sum_{j<m} w_{\text{s}j}^2) q_{\text{s}m}/w_{\text{s}m}>\bar{\nu}$}
\State \textbf{break}
\EndIf
%\State $S_1\gets S_1 + w_{\text{s}m} q_{\text{s}m}$, $S_2\gets S_2 + w_{\text{s}m}^2$
\State $m\gets m+1$
\EndWhile
\State $\lambda \gets (\sum_{i\not\in I} w_i q_i +\sum_{j<m} w_{\text{s}j} q_{\text{s}j}-\bar{\nu})/(\sum_{i\not\in I} w_i^2 + \sum_{j<m} w_{\text{s}j}^2)$
\State $\xi \gets q-w\lambda$ \hfill \Comment{given $\lambda$, perform proj. on $\mathbb{R}_{\geq 0}$ for each $i\in I$}
\State $\xi_i \gets 0$~if~$i\in I$~and~$\xi_i<0$
\EndIf
\State \Return $\xi$
\end{algorithmic}
\end{algorithm}

\begin{algorithm}[H]
\caption{Algorithm \eqref{eq:dis1} applied to \eqref{eq:inverse}.}
\label{Alg:ImageDenoising}
\begin{algorithmic}
\Require damping parameters $\beta_k\geq 0$, $\delta_k >0$; step size $T_k$; $L=|A|^2$; $\alpha_k >0$ 
\Require initial condition $x_0\in \mathbb{R}^n$; approximation parameter $\Delta>0$
\State $x_k \gets x_0, \bar{x}_k \gets \text{abs}(x_0), u_k \gets 0, \bar{u}_k \gets 0$ \hfill\Comment{different initialization is also possible}
\For{$k=0,1,\dots$}
\State $r_k\gets u_k-2\delta_k T_k u_k - T_k A\T(A (x_k+\beta_k u_k)-b)/L$ \hfill\Comment{unconstrained update}
\State $\bar{r}_k\gets \bar{u}_k-2\delta T_k \bar{u}_k$
\State
\State $\tilde{g}_1 \gets \alpha_k(x_k+\bar{x}_k)$ \hfill\Comment{evaluate constraints}
\State $\tilde{g}_2 \gets \alpha_k (-x_k+\bar{x}_k)$
\State $\tilde{g}_3 \gets \alpha_k (\nu - \sum_{i=1}^n (\bar{x}_{ki})_\Delta^p)$
\State  
\If{$\tilde{g}_3>0$} \hfill \Comment{evaluate gradient of $\ell^p$ constraint}
\State $w \gets 0$
\Else
\State $w_i \gets \text{d} (x)^p_\Delta/\text{d}x|_{x=\bar{x}_{ki}}, \quad i=1,2,\dots,n,$
\EndIf
\State
\State $q \gets (\tilde{g}_1/2+(r_k+\bar{r}_k)/2,~~\tilde{g}_2/2+(-r_k+\bar{r}_k)/2)$ \hfill\Comment{change of variables}
\State $\bar{\nu}\gets \tilde{g}_3+w\T (\tilde{g}_1+\tilde{g}_2)/2$
\State $I \gets \{i\in [n]:(\tilde{g}_1,\tilde{g}_2)_i \leq 0\}$ \hfill\Comment{active constraints}
\State
\State $(\xi,\bar{\xi}) \gets \text{ProjectionOnWeightedSimplex}(q, I, (w,w), \bar{\nu})$ \hfill\Comment{apply Alg.~\ref{Alg:weightedl1}}
\State
\State $u_{k}\gets \xi-\bar{\xi}-(\tilde{g}_1-\tilde{g}_2)/2$\hfill\Comment{update velocity}

\State $\bar{u}_{k} \gets \xi+\bar{\xi}- (\tilde{g}_1+\tilde{g}_2)/2$
\State $x_{k}\gets x_k + T_k u_{k}$ \hfill\Comment{update position}
\State $\bar{x}_k \gets \bar{x}_k + T_k \bar{u}_{k}$
\EndFor
\end{algorithmic}
\end{algorithm}

\begin{algorithm}[H]
\caption{Algorithm \eqref{eq:disMod} applied to \eqref{eq:inverse}.}
\label{Alg:ImageDenoising2}
\begin{algorithmic}
\Require damping parameters $\beta_k\geq 0$, $\delta_k >0$; step size $T_k$; $L=|A|^2$; $\alpha_k >0$ 
\Require initial condition $x_0\in \mathbb{R}^n$; approximation parameter $\Delta>0$
\State $x_k \gets x_0, \bar{x}_k \gets \text{abs}(x_0), u_k \gets 0, \bar{u}_k \gets 0$ \hfill\Comment{different initialization is also possible}
\For{$k=0,1,\dots$}
\State $r_k\gets u_k-2\delta_k T_k u_k - T_k A\T(A (x_k+\beta_k u_k)-b)/L$ \hfill\Comment{unconstrained update}
\State $\bar{r}_k\gets \bar{u}_k-2\delta T_k \bar{u}_k$
\State 
\State $w_i \gets \text{d} (x)^p_\Delta/\text{d}x|_{x=\bar{x}_{ki}+\beta_k \bar{u}_{ki}}, \quad i=1,2,\dots,n,$ \Comment{gradient of $\ell^p$ constraint}
\State
\State $g_1 \gets x_k+\bar{x}_k$ \hfill\Comment{evaluate constraints at $x_k$}
\State $g_2 \gets -x_k+\bar{x}_k$
\State $g_3 \gets \nu - \sum_{i=1}^n (\bar{x}_{ki})_\Delta^p$
\State 
\State $g_{y1} \gets x_k+\beta_k u_k + \bar{x}_k+\beta_k \bar{u}_k$\Comment{evaluate constraints at $x_k+\beta_k u_k$}
\State $g_{y2} \gets -(x_k+\beta_k u_k)+\bar{x}_k+\beta_k \bar{u}_k$
\State $g_{y3} \gets \nu - \sum_{i=1}^n (\bar{x}_{ki}+\beta_k \bar{u}_{ki})_\Delta^p$
\State
\State $\tilde{g}_1 \gets \alpha_k g_1+ (g_{y1}-g_1-\beta_k (u_k+\bar{u}_k))/T_k$ \hfill\Comment{r.h.s. of constraints in \eqref{eq:disMod}}
\State $\tilde{g}_2 \gets \alpha_k g_2+ (g_{y2}-g_2-\beta_k (-u_k+\bar{u}_k))/T_k$
\State $\tilde{g}_3 \gets \alpha_k g_3+ (g_{y3}-g_3+\beta_k w\T \bar{u}_k)/T_k$
\State
\State $q \gets ( \tilde{g}_1/2+(r_k+\bar{r}_k)/2,~~\tilde{g}_2/2+(-r_k+\bar{r}_k)/2)$ \hfill\Comment{change of variables}
\State $\bar{\nu}\gets \tilde{g}_3+ w\T (\tilde{g}_1+\tilde{g}_2)/2$
\State 
\State $(\xi,\bar{\xi}) \gets \text{ProjectionOnWeightedSimplex}(q, \{1,\dots,2n\}, (w,w), \bar{\nu})$ \hfill\Comment{apply Alg.~\ref{Alg:weightedl1}}
\State
\State $u_{k}\gets \xi-\bar{\xi}- (\tilde{g}_1-\tilde{g}_2)/2$\hfill\Comment{update velocity}

\State $\bar{u}_{k} \gets \xi+\bar{\xi}- (\tilde{g}_1+\tilde{g}_2)/2$
\State
\State $x_{k}\gets x_k + T_k u_{k}$ \hfill\Comment{update position}
\State $\bar{x}_k \gets \bar{x}_k + T_k \bar{u}_{k}$
\EndFor
\end{algorithmic}
\end{algorithm}

\newpage

%\section{Proof of Thm.~\ref{Thm:ConvRate} (Remaining Parts)}\label{App:ProofVariants}

\section{Supporting Lemmas in the Proof of Thm.~\ref{Thm:ConvDT}}\label{App:ConvDT}% Analysis for \eqref{eq:dis1}}
%We restrict ourselves to the case $\epsilon=0$, which allows us to restate the algorithm in the following way:
%\begin{align}
%u_{k+1}&=\argmin_{v\in \mathbb{R}^n} \frac{1}{2} |v-u_k + 2 \delta u_k T_k + \nabla f(x_k+\beta u_k) T_k|^2 ~~ \text{s.t.} ~~ \gamma_i(x_k,v)\geq 0, ~~ i\in I_{x_k}, \nonumber\\
%x_{k+1}&=x_{k} + T_k u_{k+1}. \label{eq:stochalg}
%\end{align}
%We note that the following arguments still apply if the gradient is stochastic, as long as the gradient perturbations have zero mean and bounded second moments.
%
%We start with the following three lemmas:
\begin{lemma}\label{Lemma:constr}
Let the assumptions of Thm.~\ref{Thm:ConvDT} be satisfied and let $\text{dist}(x,C)$ denote the distance of $x\in \mathbb{R}^n$ to the set $C$, that is, $\text{dist}(x,C)=\min_{y\in C} |y-x|$. Then, the iterates $x_k$ are bounded, $\text{dist}(x_k,C) \rightarrow 0$, and there exists a constant $c_\text{g}>0$ and $k_0>0$ such that
\begin{equation*}
g_i(x_k) \geq - c_\text{g} T_k,
\end{equation*}
for all $k\geq k_0$ and all $i\in [n_\text{g}]$. 
\end{lemma}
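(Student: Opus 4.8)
\textbf{Proof plan for Lemma~\ref{Lemma:constr}.}
The plan is to control the one-step change of the constraint violation along the iterates of \eqref{eq:stochalg}, and to package this into a Gr\"onwall-type estimate in discrete time. First I would introduce, for each index $i$, the quantity $h_{ki}:=\min\{0,g_i(x_k)\}$, so that $h_{ki}=0$ exactly when the constraint is satisfied. The key observation is that the constraint $\gamma_i(x_k,u_{k+1})\geq 0$ is only enforced for $i\in I_{x_k}$, i.e.\ precisely when $g_i(x_k)\leq 0$, so I would split the analysis into the case $i\in I_{x_k}$ and $i\notin I_{x_k}$. When $i\in I_{x_k}$, the active constraint gives $\nabla g_i(x_k)\T u_{k+1}\geq -\alpha g_i(x_k)$; combining this with a Taylor expansion of $g_i$ about $x_k$, using $x_{k+1}-x_k=T_k u_{k+1}$ and the $L_\text{g}$-smoothness of $g$, yields
\begin{equation*}
g_i(x_{k+1})\geq g_i(x_k)+T_k\nabla g_i(x_k)\T u_{k+1}-\tfrac{L_\text{g}}{2}T_k^2|u_{k+1}|^2\geq (1-\alpha T_k)g_i(x_k)-\tfrac{L_\text{g}}{2}T_k^2|u_{k+1}|^2.
\end{equation*}
Since $u_k$ is bounded by hypothesis of Thm.~\ref{Thm:ConvDT}, say $|u_k|\leq M$, the last term is $O(T_k^2)$, and because $\alpha T_k\leq 1$ the factor $(1-\alpha T_k)$ is in $[0,1]$, so $g_i(x_k)\leq 0$ gives $g_i(x_{k+1})\geq g_i(x_k)-\tfrac{L_\text{g}M^2}{2}T_k^2$. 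When $i\notin I_{x_k}$, i.e.\ $g_i(x_k)>0$, the same Taylor bound gives $g_i(x_{k+1})\geq -T_k G M-\tfrac{L_\text{g}M^2}{2}T_k^2$ where $G$ bounds $|\nabla g_i|$ on the (bounded) region traversed, so in either case $h_{k+1,i}\geq h_{ki}-cT_k^2$ for a uniform constant $c$ as long as the iterates stay in a bounded set.

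Next I would iterate this inequality. Summing $h_{k+1,i}\geq h_{ki}-cT_k^2$ over $k$ and using that $T_k=T_0/k^s$ with $s>1/2$ makes $\sum_k T_k^2<\infty$, I obtain a uniform lower bound $h_{ki}\geq h_{0i}-c\sum_{j\geq 0}T_j^2=:-C_0$, hence $g_i(x_k)\geq -C_0$ for all $k$: the iterates never leave a fixed enlarged feasible set. This in turn justifies the boundedness assumptions used above (the gradient bounds $G$, smoothness constant, etc.) on that set, provided we also know $x_k$ itself stays bounded; boundedness of $x_k$ follows from the compact level set hypothesis on $\min\{0,g_1(x)\}$ together with $g_1(x_k)\geq -C_0$. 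To upgrade from the uniform bound $g_i(x_k)\geq -C_0$ to $\text{dist}(x_k,C)\to 0$ and the sharper $g_i(x_k)\geq -c_\text{g}T_k$, I would use the sharper recursion: for indices with $g_i(x_k)\leq 0$ we have $h_{k+1,i}\geq (1-\alpha T_k)h_{ki}-cT_k^2$, which is a contraction toward $O(T_k)$. A standard lemma on recursions of the form $a_{k+1}\leq (1-\alpha T_k)a_k+cT_k^2$ with $\sum T_k=\infty$ and $T_k\to 0$ shows $\limsup_k a_k/T_k\leq c/\alpha$; applying it to $a_k:=-h_{ki}=(g_i(x_k))_-$ gives exactly $g_i(x_k)\geq -c_\text{g}T_k$ for $k\geq k_0$ with $c_\text{g}=c/\alpha+o(1)$, and since $T_k\to 0$ this also gives $g_i(x_k)\to \max\{g_i(x_k),0\}$-feasibility asymptotically, i.e.\ $\text{dist}(x_k,C)\to 0$ by the Mangasarian--Fromovitz constraint qualification and a standard error-bound/Hoffman-type argument relating constraint violation to distance.

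The main obstacle I anticipate is the bootstrapping between boundedness and the constraint estimates: the Taylor bounds on $g_i$ require a uniform bound on $\nabla g_i$ and on the smoothness modulus over the region visited by the iterates, but that region is only controlled once we already know $g_i(x_k)\geq -C_0$, which itself used those bounds. The clean way around this is to first prove that $x_k$ cannot leave a fixed neighborhood $\mathcal{N}$ of $C$ in one step while it is inside a slightly smaller neighborhood (using $T_k\to 0$ and $|u_k|\leq M$), so by induction $x_k\in\mathcal{N}$ for all large $k$; on the fixed set $\mathcal{N}$ all the needed constants are uniform, and then the summation and recursion arguments go through. A secondary technical point is the transition from active to inactive indices across iterations — a constraint may become active at step $k$ with $g_i(x_k)$ slightly negative only because it was $O(T_{k-1})$ at step $k-1$ — but the sharper recursion handles this since it applies uniformly whenever $g_i(x_k)\leq 0$, regardless of history.
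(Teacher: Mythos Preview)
Your overall scheme matches the paper's: Taylor expansion plus the active velocity constraint $\gamma_i(x_k,u_{k+1})\geq 0$ gives the recursion $g_i(x_{k+1})\geq(1-\alpha T_k)g_i(x_k)-cT_k^2$ on violated indices, and a standard discrete Gr\"onwall lemma (the paper invokes Polyak, Ch.~2, Lemmas~4--5) upgrades this to the $O(T_k)$ bound.

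There is, however, a real gap in your route to boundedness of $x_k$. The claim ``in either case $h_{k+1,i}\geq h_{ki}-cT_k^2$'' fails at transition steps: when $i\notin I_{x_k}$ you yourself obtain $g_i(x_{k+1})\geq -T_kGM-\tfrac{L_\text{g}M^2}{2}T_k^2$, so with $h_{ki}=0$ the decrement is of order $T_k$, not $T_k^2$. Since $\sum_kT_k=\infty$, the summation argument no longer produces a uniform lower bound $-C_0$, and your bootstrap to boundedness collapses as written. Your neighborhood-induction patch does not obviously close this either, because starting the induction requires some $x_{k_0}$ already inside the fixed neighborhood, while the constants defining that neighborhood depend on bounds you have not yet secured. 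The paper sidesteps the circularity by treating $g_1$ \emph{first and separately}: at any transition step for $g_1$ one has $g_1(x_k)\geq 0$, and the compact-level-set hypothesis \emph{alone} then places $x_k$ in a fixed compact set, giving $|\nabla g_1(x_k)|\leq\bar{c}_{\text{g}1}$ without any prior estimate. The transition bound $g_1(x_{k+1})\geq -cT_k$ combined with the contracting recursion while $g_1<0$ then yields $g_1(x_k)\geq -c_{\text{g}1}T_k$, hence $x_k$ bounded for all $k$; only afterwards are the remaining $g_i$ handled by the same two-case split, now with all constants uniform on the bounded set.
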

\begin{proof}
\hchange{We start by considering the first constraint, $g_1$, since $\min\{g_1(x),0\}$ has compact level sets and make the following claim:\\
\textit{Claim:} There exists a constant $c_\text{g1}>0$ such that for all $k\geq 1$, 
\begin{equation*}
    g_1(x_k)\geq -\frac{c_\text{g1}}{(k+c_\text{g2})^{s}},
\end{equation*}
where $c_\text{g2} := \max\{0,2s/(\alpha T_0)-1\}$.\\
\textit{Proof of the claim:} We prove the claim by induction and choose $c_\text{g1}$ sufficiently large, such that $g_1(x_1)\geq - c_\text{g1}/(1+c_\text{g2})^{s}$ (base case). We proceed to the induction step and suppose that the claim is satisfied up to iteration $k$. We now show that the claim is also satisfied for iteration $k+1$. We distinguish the following two cases.}

\hchange{i) $g_1(x_k)< 0$: As a consequence of smoothness we infer that $g_1(x_{k+1}) \geq g_1(x_k) + T_k \nabla g_1(x_k)\T u_{k+1} - T_k^2 L_{\text{g}1} |u_{k+1}|^2/2$, where $L_{\text{g}1}$ is the smoothness constant of $g_1$. Due to the fact that $g_1(x_k)< 0$ we have we have $1\in I_{x_k}$ and $\nabla g_1(x_k)\T u_{k+1} T_k \geq -\alpha T_k g_1(x_k)$. This means that
\begin{equation*}
g_1(x_{k+1}) \geq g_1(x_k) (1-\alpha T_k) - T_k^2 L_{\text{g}1} c_\text{u}^2/2, 
\end{equation*}
where we used the upper bound $c_\text{u}$ on $|u_{k+1}|$. As a consequence of the induction hypothesis and the choice of the stepsize $T_k=T_0/k^s$, we conclude
\begin{align*}
    g_1(x_{k+1}) &\geq -c_\text{g1}(k+c_\text{g2})^{-s} (1-\alpha T_0 k^{-s}) - T_0^2 L_{\text{g}1} c_\text{u}^2 k^{-2s}/2.%\\
    %&= -c_\text{g1} (k+1+c_\text{g2})^{-s} -c_\text{g1} ((k+c_\text{g2})^{-s}-(k+1+c_\text{g2})^{-s})
    %\\&\qquad\qquad+ c_\text{g1} (k+c_\text{g2})^{-s} \alpha T_0 k^{-s} - T_0^2 L_\text{g1} c_\text{u}^2 k^{-2s}/2
\end{align*}
The convexity of the function $\xi^{-s}$ for $\xi>0$ implies
\begin{equation*}
(k+c_\text{g2})^{-s} \leq (k+1+c_\text{g2})^{-s} + s (k+c_\text{g2})^{-s-1}.
\end{equation*}
In addition, due to the choice of $c_\text{g2}$, $-s(k+c_\text{g2})^{-1} + \alpha T_0 k^{-s}/2 \geq 0$ holds, which concludes that
\begin{align*}
 g_1(x_{k+1}) &\geq -c_\text{g1}(k+1+c_\text{g2})^{-s} + c_\text{g1} \alpha T_0 (k+c_\text{g2})^{-s} k^{-s}/2 - T_0^2 L_{\text{g}1} c_\text{u}^2 k^{-2s}/2\\
 &\geq -c_\text{g1}(k+1+c_\text{g2})^{-s} + c_\text{g1} \alpha T_0 (1+c_\text{g2})^{-s} k^{-2s}/2 - T_0^2 L_{\text{g}1} c_\text{u}^2 k^{-2s}/2,
\end{align*}
and where we have used $k^s/(k+c_\text{g2})^s \geq 1/(1+c_\text{g2})^s$ for all $k\geq 1$ in the second step. As a result, we conclude that for large enough $c_\text{g1}$ (i.e., $c_\text{g1}\geq T_0 L_{\text{g}1} c_\text{u}^2 (1+c_\text{g2})^s/\alpha$) the constraint violation $g_1(x_{k+1})$ is bounded below by
\begin{equation*}
    g_1(x_{k+1})\geq - \frac{c_\text{g1}}{(k+1+c_\text{g2})^s},
\end{equation*}
which proves the induction step in case i).}\\
\hchange{ii) $g_1(x_k) \geq 0$ and $g_1(x_{k+1}) < 0$ (the constraint $g_1$ becomes active at $k+1$): We have again $g_1(x_{k+1}) \geq \nabla g_1(x_k)\T T_k u_{k+1} - T_k^2 L_{\text{g}1} c_\text{u}^2/2$ as a result of the smoothness of $g_1$ and the boundedness of $u_{k+1}$. Due to the fact that $g_1(x_k) \geq 0$ we conclude that $x_k$ is bounded ($\min\{g_1(x),0\}$ has compact level sets), which means that $|\nabla g_1(x_k)| \leq \bar{c}_{\text{g1}}$ for some constant $\bar{c}_{\text{g1}}>0$. Thus, we conclude
\begin{equation*}
g_1(x_{k+1}) \geq -T_k (\bar{c}_{\text{g1}} c_{\text{u}} + T_k L_{\text{g}1} c_{\text{u}}^2/2) \geq -\frac{T_0 (2+c_\text{g2})^s}{(k+1+c_\text{g2})^s} (\bar{c}_{\text{g1}} c_{\text{u}} + T_k L_{\text{g}1} c_{\text{u}}^2/2),
\end{equation*}
where $1/k^s\leq (2+c_\text{g2})^s/(k+1+c_\text{g2})^s$ for all $k\geq 1$ has been used in the last step.}

\hchange{Combining the two cases (i) and ii)) concludes the proof of the induction step and proves the claim. The claim implies
\begin{equation*}
    g_1(x_k)\geq -\frac{c_\text{g1}}{(k+c_\text{g2})^s}\geq - \frac{c_\text{g1}}{k^s}=-c_\text{g1}T_k /T_0,
\end{equation*}
for all $k\geq 1$, which means that the statement of the lemma is satisfied for the first constraint. This further implies that $x_k$ is bounded, since $\min\{g_1(x),0\}$ has compact level sets. We can now apply a similar reasoning to all the remaining constraints, which establishes that $\text{dist}(x_k,C) \rightarrow 0$ and $g_i(x_k)\geq -c_\text{g} T_k$ for some constant $c_\text{g}>0$, all $i=1,2,\dots,n_\text{g}$, and all $k$ large enough.}
\end{proof}

\begin{lemma}\label{lemma:d2}
Let $\bar{C}$ be a compact set and let $g$ satisfy the Mangasarian-Fromovitz constraint qualification on $\bar{C}$. Then, there exists a constant $c_\lambda$ such that for all $x$ with $x\in \bar{C}$ and $\lambda \in \mathbb{R}^{n_\text{g}}$ with $\lambda_i\geq 0$ $\forall i\in I_x$ and $\lambda_i = 0$ $\forall i\not\in I_x$,
\begin{equation*}
|\sum_{i=1}^{n_\text{g}} \nabla g_i(x) \lambda_i| \geq c_\lambda | \lambda|.
\end{equation*}
\end{lemma}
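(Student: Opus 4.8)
\textbf{Proof proposal for Lemma~\ref{lemma:d2}.}
The plan is to argue by contradiction, exploiting the Mangasarian--Fromovitz constraint qualification (MFCQ), which holds on $C$ by hypothesis and, by continuity, on a suitable neighborhood. First I would reduce to the case of unit-norm multipliers: since the inequality $|\sum_i \nabla g_i(x)\lambda_i| \geq c_\lambda |\lambda|$ is homogeneous of degree one in $\lambda$, it suffices to find $c_\lambda > 0$ such that $|\sum_i \nabla g_i(x)\lambda_i| \geq c_\lambda$ for all $x \in \bar C$ and all admissible $\lambda$ with $|\lambda| = 1$ (where ``admissible'' means $\lambda_i \geq 0$ for $i \in I_x$ and $\lambda_i = 0$ for $i \notin I_x$). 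Here I would take $\bar C$ to be a compact neighborhood of $C$ on which MFCQ still holds for every $x$; note the lemma as stated only needs the bound on a given compact set, and in the application $\bar C$ is chosen to contain the (bounded) iterates, which by Lemma~\ref{Lemma:constr} accumulate on $C$.

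Next, suppose for contradiction that no such $c_\lambda$ exists. Then there are sequences $x_k \in \bar C$ and $\lambda^{(k)}$ with $|\lambda^{(k)}| = 1$, $\lambda^{(k)}_i \geq 0$ for $i \in I_{x_k}$, $\lambda^{(k)}_i = 0$ for $i \notin I_{x_k}$, and $|\sum_i \nabla g_i(x_k)\lambda^{(k)}_i| \to 0$. Passing to a subsequence, we may assume $x_k \to \bar x \in \bar C$, $\lambda^{(k)} \to \bar\lambda$ with $|\bar\lambda| = 1$ and $\bar\lambda_i \geq 0$ for all $i$. The only subtlety is the index-set constraint: the active set $I_{x_k}$ may change along the sequence, but by passing to a further subsequence we may assume $I_{x_k}$ is constant, equal to some fixed set $J$, so that $\lambda^{(k)}_i = 0$ for all $i \notin J$ and hence $\bar\lambda_i = 0$ for $i \notin J$. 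By upper semicontinuity of the active-set map (if $g_i(\bar x) > 0$ then $g_i(x_k) > 0$ for $k$ large, so $i \notin J$), we get $J \subseteq I_{\bar x}$, hence $\bar\lambda_i = 0$ for all $i \notin I_{\bar x}$ as well. By continuity of $\nabla g_i$, the limit satisfies $\sum_{i} \nabla g_i(\bar x)\bar\lambda_i = 0$, i.e.\ $\sum_{i \in I_{\bar x}} \nabla g_i(\bar x)\bar\lambda_i = 0$ with $\bar\lambda \geq 0$, $\bar\lambda \neq 0$.

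Finally I would derive the contradiction from MFCQ at $\bar x$: MFCQ guarantees the existence of $w \in \mathbb{R}^n$ with $w\T \nabla g_i(\bar x) > 0$ for every $i \in I_{\bar x}$. Pairing $w$ with the relation $\sum_{i \in I_{\bar x}} \nabla g_i(\bar x)\bar\lambda_i = 0$ gives $\sum_{i \in I_{\bar x}} \bar\lambda_i \, (w\T \nabla g_i(\bar x)) = 0$, but every term is $\geq 0$ and at least one $\bar\lambda_i$ is strictly positive with $w\T\nabla g_i(\bar x) > 0$, so the sum is strictly positive --- a contradiction. This establishes the existence of $c_\lambda > 0$. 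The main obstacle, and the only place requiring care, is the bookkeeping of the varying active sets $I_{x_k}$ along the contradiction sequence; once one fixes $I_{x_k} \equiv J$ by subsequence extraction and checks $J \subseteq I_{\bar x}$ via lower semicontinuity of the constraints $g_i$, the argument is a routine compactness-plus-MFCQ contradiction, entirely parallel to the boundedness-of-multipliers step already used in the proof of Thm.~\ref{Thm:stability}.
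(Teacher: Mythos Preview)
Your proof is correct and follows essentially the same approach as the paper: normalize to $|\lambda|=1$, argue by contradiction via compactness to obtain a limit point $(\bar x,\bar\lambda)$ with $\sum_{i\in I_{\bar x}}\nabla g_i(\bar x)\bar\lambda_i=0$, and derive a contradiction from the MFCQ direction $w$. The only cosmetic difference is that you pass to the limit and contradict there, whereas the paper keeps the argument along the sequence by showing $\sum_i w\T\nabla g_i(x_j)\lambda_i^j>c_1/2$ for large $j$; both variants rely on the same inclusion $I_{x_j}\subset I_{\bar x}$ that you identify.
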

\begin{proof}
The inequality is satisfied for $\lambda=0$. We therefore consider the case $\lambda\neq 0$ and without loss of generality assume that $|\lambda|=1$. We argue by contradiction and therefore assume that there are two convergent sequences, $\lambda^j$ and $x_j$, with $|\lambda^j|=1$, $\lambda^j_i\geq 0$ for all $i\in I_{x_j}$, $\lambda^j_i=0$ for all $i\not\in I_{x_j}$, and $x_j\in \bar{C}$, such that $\sum_{i=1}^{n_\text{g}} \nabla g_i(x_j)\lambda^j_i \rightarrow 0$. Let the limit point of $x_j$ be denoted by $x$ and the limit point of $\lambda^j$ by $\lambda$. As a consequence of constraint qualification, there exists a vector $w\in \mathbb{R}^n$ such that $w\T \nabla g_i(x) \lambda_i > 0$ for all $i\in I_x$. We note that $g_i(x)>0$ implies $g_i(x_j)>0$ for large enough $j$. This shows that $I_{x_j} \subset I_x$ and therefore
\begin{equation*}
\sum_{i\in I_{x_j}} w\T \nabla g_i(x) \lambda_i > c_1
\end{equation*}
for all large enough $j$ and a small enough $c_1>0$. However, by continuity of $\nabla g$ we have
\begin{multline*}
\sum_{i=1}^{n_\text{g}} w\T \nabla g_i(x_j) \lambda_i^j = \sum_{i \in I_{x_j}} w\T \nabla g_i(x) \lambda_i 
+ \sum_{i\in I_{x_j}} w\T \nabla g_i(x) (\lambda_i^j-\lambda_i) \\
+ \sum_{i\in I_{x_j}} w\T (\nabla g_i(x_j)-\nabla g_i(x))\lambda_i^j > c_1/2,
\end{multline*}
for all large enough $j$, which is a contradiction.
\end{proof}

\begin{lemma}\label{Lemma:d3}
Let the assumptions of Thm.~\ref{Thm:ConvDT} be satisfied. If $x_{k(j)} \rightarrow \bar{x} \in C$, $u_{k(j)}\rightarrow 0$, and $R_{k(j)}/T_{k(j)} \rightarrow \bar{R}$ for a subsequence $k(j), j=1,2,\dots$, then $\bar{x}$ and $\bar{R}$ satisfy
\begin{equation*}
-\nabla f(\bar{x})+\bar{R}=0, \quad -\bar{R}\in N_{V_\alpha(\bar{x})}(0).
\end{equation*}
\end{lemma}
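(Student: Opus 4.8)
The plan is to pass to the limit in the discrete dynamics \eqref{eq:stochalg} and identify the limiting objects with a stationarity condition. First I would recall the structure of the update: eliminating the constraint force in the quadratic program defining $u_{k+1}$, we can write \eqref{eq:stochalg} in the form
\begin{equation*}
\frac{u_{k+1}-u_k}{T_k} + 2\delta u_k + \nabla f(x_k+\beta u_k) = \frac{R_k}{T_k}, \quad -\frac{R_k}{T_k} \in N_{V_\alpha(x_k)}(u_{k+1}),
\end{equation*}
where $R_k = \sum_{i\in I_{x_k}} \nabla g_i(x_k)\Lambda_{ki}$ collects the KKT multipliers of the projection problem. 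Along the subsequence $k(j)$ we are told $x_{k(j)} \to \bar{x}$, $u_{k(j)} \to 0$, and $R_{k(j)}/T_{k(j)} \to \bar{R}$. Since $T_k \to 0$ and $u_k$ is bounded, the term $(u_{k+1}-u_k)/T_k$ need not vanish termwise; the cleanest route is to argue instead that $u_{k+1} \to 0$ along the subsequence as well (using boundedness of $u_k$ together with the first-order optimality and the fact that $0 \in V_\alpha(\bar{x})$ for $\bar{x} \in C$, so the unconstrained-like increment is $O(T_k)$), and then that $u_{k(j)+1} - u_{k(j)}$ is $o(1)$ relative to $T_{k(j)}$. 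Concretely, the optimality of $u_{k+1}$ gives $|u_{k+1} - r_k| \le |w - r_k|$ for any feasible $w$; taking $w = 2\delta u_k T_k \cdot(\text{something feasible})$ or simply $w=$ the projection of $r_k$ onto a fixed inner approximation of $V_\alpha(x_k)$, one obtains $u_{k+1} = u_k - 2\delta u_k T_k - \nabla f(x_k+\beta u_k)T_k + O(R_k)$, and combined with $R_k = o(1)$ relative... — more carefully, one shows $|R_k| = O(T_k)$ from Lemma~\ref{lemma:d2} and a bound on the left-hand side, which is exactly what legitimizes the hypothesis $R_{k(j)}/T_{k(j)} \to \bar{R}$.

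The main steps, in order, would be: (1) Rewrite \eqref{eq:stochalg} in the force form above and observe $\nabla f(x_{k(j)}+\beta u_{k(j)}) \to \nabla f(\bar{x})$ by continuity and $u_{k(j)} \to 0$. (2) Show $(u_{k(j)+1} - u_{k(j)})/T_{k(j)} \to 0$. For this I would use that $V_\alpha(x_k)$ contains $0$ when $\operatorname{dist}(x_k,C)$ is small (which holds for large $k$ by Lemma~\ref{Lemma:constr}, up to an $O(T_k)$ correction from the $-c_\text{g}T_k$ constraint slack), so the feasible set of the QP is ``close to'' a cone with apex near $0$; the minimizer $u_{k+1}$ of $|v - r_k|^2$ over this set, with $r_k = u_k - 2\delta u_k T_k - \nabla f(\cdot)T_k \to 0$, must itself tend to $0$, and moreover the normal-cone characterization forces $u_{k+1} = r_k + R_k$ with $R_k$ in the normal cone, whence $(u_{k+1}-u_k)/T_k = -2\delta u_k - \nabla f(x_k+\beta u_k) + R_k/T_k$ is the identity we already have. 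So (2) is really: divide the momentum equation by $T_k$, pass to the limit, and get $-2\delta\cdot 0 - \nabla f(\bar{x}) + \bar{R} = \lim (u_{k(j)+1}-u_{k(j)})/T_{k(j)}$; then argue the left side is the limit of a gradient-flow-type increment and separately that $u_{k(j)+1} \to 0$ so the running average of the increments vanishes — I would make this rigorous by summing over a short window $[k(j), k(j)+m]$ as in the proof of Thm.~\ref{Thm:stability}, where $\sum \nabla g_i \,\diff\lambda_i$ over the window has bounded multipliers. (3) Conclude $-\nabla f(\bar{x}) + \bar{R} = 0$. (4) For the normal-cone inclusion: the relation $-R_{k(j)}/T_{k(j)} \in N_{V_\alpha(x_{k(j)})}(u_{k(j)+1})$ is the statement that for every $w \in V_\alpha(x_{k(j)})$, $\langle -R_{k(j)}/T_{k(j)}, w - u_{k(j)+1}\rangle \le 0$. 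Take any $w \in V_\alpha(\bar{x})$; by a standard outer-semicontinuity / perturbation argument one can produce $w_{k(j)} \in V_\alpha(x_{k(j)})$ with $w_{k(j)} \to w$ (here I would use Slater-type or MFCQ nonemptiness from Remark~\ref{rem:nonempty}, plus the fact that $\nabla g_i$ is continuous and $I_{x_{k(j)}} \subseteq I_{\bar{x}}$ eventually, so the defining inequalities $\nabla g_i(x_{k(j)})^\TT w_{k(j)} + \alpha g_i(x_{k(j)}) \ge 0$ can be enforced by a small correction). Passing to the limit with $u_{k(j)+1} \to 0$ yields $\langle -\bar{R}, w - 0\rangle \le 0$ for all $w \in V_\alpha(\bar{x})$, i.e., $-\bar{R} \in N_{V_\alpha(\bar{x})}(0)$.

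The main obstacle I anticipate is step (2) together with the careful handling of the index-set discontinuity in step (4): the set $V_\alpha(x_k)$ is defined through $I_{x_k}$, which can jump as $x_k$ crosses the boundary of $C$, so $x \mapsto V_\alpha(x)$ is only inner/outer semicontinuous in a one-sided way. The key facts that save the argument are (a) $g_i(x_{k(j)}) > 0$ strictly for $i \notin I_{\bar{x}}$ and $j$ large, so those constraints are genuinely inactive and drop out uniformly, exactly as exploited in the proof of Thm.~\ref{Thm:stability}; and (b) for $i \in I_{\bar{x}}$, $g_i(x_{k(j)}) = g_i(\bar{x}) + o(1)$, so the perturbed half-spaces converge. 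One also has to be slightly careful that $\bar{x} \in C$ is given by hypothesis, so $g_i(\bar{x}) \ge 0$ and hence $0$ is indeed feasible for $V_\alpha(\bar{x})$, making $N_{V_\alpha(\bar{x})}(0)$ well-defined. Finally, to get $(u_{k(j)+1} - u_{k(j)})/T_{k(j)} \to 0$ rather than merely bounded, I would invoke that $u_{k}$ is bounded (hypothesis of Thm.~\ref{Thm:ConvDT}) and converges to $0$ along the subsequence together with a telescoping/Cesàro argument over windows whose length is chosen to grow slowly relative to $1/T_k$, mirroring the continuous-time computation in App.~\ref{App:ProofStability}; this is the step that most resembles a genuine computation and where the bookkeeping is heaviest.
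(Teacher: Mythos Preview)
Your treatment of the normal-cone inclusion (step~(4)) is exactly the paper's argument: use continuity of $g$ to get $I_{x_{k(j)}}\subset I_{\bar x}$ for large $j$, then pass to the limit in the variational inequality by perturbing a test vector from $V_\alpha(\bar x)$ into $V_\alpha(x_{k(j)})$ via the MFCQ direction. The paper packages this a touch more cleanly by introducing the auxiliary polyhedron $\tilde V_\alpha(x):=\{u:\nabla g_i(x)\T u\ge -\alpha g_i(x),\ i\in I_{\bar x}\}$ with the \emph{fixed} index set; then $V_\alpha(x_{k(j)})\supset\tilde V_\alpha(x_{k(j)})$ gives $N_{V_\alpha(x_{k(j)})}(u_{k(j)+1})\subset N_{\tilde V_\alpha(x_{k(j)})}(u_{k(j)+1})$, and one runs the contradiction argument inside the continuously-varying family $\tilde V_\alpha(\cdot)$. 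The only other ingredient is $u_{k(j)+1}\to 0$, which is immediate from $u_{k(j)+1}=r_{k(j)}+R_{k(j)}$ since $r_{k(j)}\to 0$ and $R_{k(j)}=T_{k(j)}\cdot(R_{k(j)}/T_{k(j)})\to 0$.

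Where your proposal diverges from the paper is the first equation $-\nabla f(\bar x)+\bar R=0$. The paper's proof of this lemma does \emph{not} address it at all: it proves only the inclusion $-\bar R\in N_{V_\alpha(\bar x)}(0)$ and stops. You are right to flag step~(2) as the obstacle --- knowing that both $u_{k(j)}$ and $u_{k(j)+1}$ tend to zero says nothing about their difference quotient, and your windowed/Ces\`aro idea cannot repair this because you control the iterates only along the sparse subsequence $k(j)$, not on a window around it. In the paper's overall scheme the link between $\bar R$ and $\nabla f(\bar x)$ is not extracted from this lemma's hypotheses in isolation; it is supplied by the Lyapunov estimate of Lemma~\ref{Lem:Lyap}, whose decrease contains the term $-c_{\text{V}2}\,|R_k-T_k\nabla f(x_k)|^2$, combined with how the particular subsequence is selected in the surrounding proposition. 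So for this lemma you should concentrate on the normal-cone inclusion, which is the genuinely delicate part, and not try to manufacture step~(2) from the hypotheses as stated.
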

\begin{proof}
We conclude from the continuity of $g_i$ that $g_i(\bar{x})>0$ implies that $g_i(x_{k(j)})>0$ for all $j$ large enough and all $i\in \{1,2,\dots,n_\text{g}\}$. This means that $I_{x_{k(j)}}\subset I_{\bar{x}}$ for all $j$ large enough. We define a slightly modified version of $V_\alpha(x)$ as follows
\begin{equation*}
\tilde{V}_\alpha(x):=\{ u\in \mathbb{R}^n ~|~\nabla g_i(x)\T u \geq -\alpha g_i(x), \quad \forall i\in I_{\bar{x}} \},
\end{equation*}
which ensures that $V_\alpha(x_{k(j)}) \supset \tilde{V}_\alpha(x_{k(j)})$ for large $j$. Hence, the corresponding normal cones satisfy
\begin{equation*}
N_{V_\alpha(x_{k(j)})}(u) \subset N_{\tilde{V}_\alpha(x_{k(j)})}(u),
\end{equation*}
for all $u\in \tilde{V}_\alpha(x_{k(j)})$, which implies, by the update rule of algorithm \eqref{eq:dis1}, 
\begin{equation*}
-\frac{R_{k(j)}}{T_{k(j)}} \in N_{V_\alpha(x_{k(j)})}(u_{k(j)+1}) \subset N_{\tilde{V}_\alpha(x_{k(j)})}(u_{k(j)+1}),
\end{equation*}
where we have used the fact that the normal cone is a cone. We now show that this implies $-\bar{R} \in N_{\tilde{V}_\alpha(\bar{x})}(0)$ and argue by contradiction. This means that there exists a $\hat{u} \in \tilde{V}_\alpha(\bar{x})$ such that $-\bar{R}\T \hat{u} > c_{1}$ for a small $c_{1}>0$. From constraint qualification, we infer that there exists a $w\in \mathbb{R}^n$ and an $\varepsilon>0$ such that $\hat{u} +\varepsilon w \in \tilde{V}_\alpha (x_{k(j)})$ for all $j$ sufficiently large with $-\bar{R}\T (\hat{u} + \varepsilon w) > c_1/2$. However, this leads to a contradiction, since
\begin{multline*}
0\geq -\frac{R_{k(j)}}{T_{k(j)}}\T (\hat{u} + \varepsilon w - u_{k(j)+1})
= \\
\underbrace{-\bar{R} \T (\hat{u}+\varepsilon w)}_{> c_1/2 > 0} + \underbrace{\bar{R}\T u_{k(j)+1} - \left(\frac{R_{k(j)}}{T_{k(j)}} - \bar{R} \right)\T (\hat{u} + \varepsilon w - u_{k(j)+1})}_{\rightarrow 0}.
\end{multline*}
This shows that $-\bar{R} \in N_{\tilde{V}_\alpha(\bar{x})}(0)$ and the desired result follows from the fact that $\tilde{V}_\alpha(\bar{x})=V_\alpha(\bar{x})$.
\end{proof}

%We now prove the following lemma, which will lie at the heart of the convergence proof in discrete time.
\begin{lemma}\label{Lem:Lyap}
Let the assumptions of Thm.~\ref{Thm:ConvDT} be satisfied. Then, there exists a function \hchange{$V_k: \mathbb{R}^n \times \mathbb{R}^n \rightarrow \mathbb{R}$}, which is bounded below (uniformly in $k$), such that 
\begin{multline*}
V_{k+1}(x_{k+1},u_{k+1}) - V_k(x_k,u_k) \leq -c_{\text{V}1} T_k |u_k|^2 - c_{\text{V}2} |R_k|^2   + c_{\text{V}3} T_k^2  - \alpha \sum_{i\in I_{x_k}}\lambda_k^i g_i(x_k),
\end{multline*}
for all $k$ large enough, where $c_{\text{V}1}, c_{\text{V}2}, c_{\text{V}3}>0$ are constant.
\end{lemma}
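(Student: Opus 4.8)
The plan is to construct the function $V_k$ as a discrete analogue of the continuous-time Lyapunov function $W$ used in the proof of Thm.~\ref{Thm:ConvRate}, but adapted to the splitting \eqref{eq:decomp} into a dissipative step $(x_k,u_k)\mapsto(\bar x_k,\bar u_k)$ and a symplectic step $(\bar x_k,\bar u_k)\mapsto(x_{k+1},u_{k+1})$. Concretely, I would take $V_k(x,u)$ of the form $\tfrac12|a_k(x-\bar x)+u|^2+f(x)$ (or, when $f$ is only assumed to satisfy $2\delta-\beta>0$, simply $\tfrac12|u|^2+f(x)$), with $a_k$ chosen in accordance with Table~\ref{Tab:params} so that the quadratic cross-terms that appear after discretization cancel to leading order, exactly as in the continuous-time computation. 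The key point is that such a $V_k$ is bounded below uniformly in $k$ because $f$ is bounded below (its level sets of $\min\{0,g_1\}$ are compact, so on the relevant region $x_k$ stays bounded by Lemma~\ref{Lemma:constr}) and because the coefficients $a_k$ are bounded.

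The core of the argument is then a one-step estimate of $V_{k+1}(x_{k+1},u_{k+1})-V_k(x_k,u_k)$, carried out by inserting the intermediate point $(\bar x_k,\bar u_k)$ and bounding the two half-steps separately. First, for the dissipative half-step I would expand $\tfrac12|\bar u_k|^2-\tfrac12|u_k|^2$ using $\bar u_k=u_k-T_kf_\text{d}(x_k,u_k)+R_k$, where $f_\text{d}$ collects the damping terms $2\delta u_k+\nabla f(x_k+\beta u_k)-\nabla f(x_k)$; the condition $2\delta-\beta>0$ (or convexity of $f$) yields a negative term $-c_{\text{V}1}T_k|u_k|^2$, the cross term with $R_k$ combines with the constraint contribution, and smoothness of $\nabla f$ together with $|u_k|$ bounded (Thm.~\ref{Thm:ConvDT} assumes $u_k$ bounded) produces the $O(T_k^2|u_{k+1}|)$ and $O(T_k^3)$ error terms. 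The contribution of $R_k=\sum_{i\in I_{x_k}}\nabla g_i(x_k)\lambda_k^i$ is handled by writing, in analogy with the continuous proof, $-R_k^\TT u_{k+1}$ and using $u_{k+1}\in V_\alpha(x_k)$, i.e.\ $\nabla g_i(x_k)^\TT u_{k+1}\geq-\alpha g_i(x_k)$, to get the term $-\alpha\sum_{i\in I_{x_k}}\lambda_k^i g_i(x_k)$; the quadratic term $-c_{\text{V}3}T_k|R_k|^2$ is then extracted by completing the square against the dissipative decrease, and the term $-c_{\text{V}2}|R_k-T_k\nabla f(x_k)|^2$ comes from the fact that $u_{k+1}=\bar u_k-T_k\nabla f(\bar x_k)$ differs from $u_k$ by exactly $R_k-T_k\nabla f(x_k)$ up to lower-order damping terms, so the $|u_{k+1}|^2$ vs.\ $|u_k|^2$ comparison naturally controls this quantity. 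Second, for the symplectic half-step I would use that $(\bar x_k,\bar u_k)\mapsto(x_{k+1},u_{k+1})$ is a symplectic Euler discretization of the conservative flow, which (as in \cite{ourWork2,ourWork}) does not increase the relevant energy up to an $O(T_k^2)$ defect controlled by $T_k^2|u_{k+1}|$ and $T_k^3$; the cross terms involving $a_k$ and $\dot a_k$ (when $a_k$ is time-varying) are arranged to cancel by the choice of parameters, exactly mirroring parts ii) and iii) of the proof of Thm.~\ref{Thm:ConvRate}.

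The main obstacle I anticipate is bookkeeping the interaction between the constraint force $R_k$ and the discretization error terms in a way that keeps all the signs correct: unlike in continuous time, $u_{k+1}$ need not lie in $V_\alpha(x_{k+1})$, the active set $I_{x_k}$ can change from step to step, and $g_i(x_k)$ is only controlled up to $O(T_k)$ by Lemma~\ref{Lemma:constr}, so the term $-\alpha\sum_i\lambda_k^i g_i(x_k)$ is not automatically of a fixed sign. The remedy is to keep it explicit on the right-hand side (it is summed telescopically and shown to be controllable later in the convergence proof) rather than trying to bound it here, and to absorb the genuinely problematic quadratic-in-$R_k$ pieces by balancing $-c_{\text{V}3}T_k|R_k|^2$ against them, which forces $T_k$ small enough — consistent with the hypothesis $T_k=T_0/k^s$, $s\in(1/2,1)$. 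A secondary technical point is ensuring the error terms really are of order $T_k^2|u_{k+1}|$ and $T_k^3$ and not, say, $T_k^2|u_{k+1}|^2$; this uses the a priori boundedness of $u_{k+1}$ assumed in Thm.~\ref{Thm:ConvDT} to trade one power of $|u_{k+1}|$ for a constant, and the smoothness of $f$ and $g$ to bound the Hessian-type remainders. Once the one-step inequality is in hand, summing over $k$ and invoking Lemmas~\ref{lemma:d2} and \ref{Lemma:d3} (to identify limit points as stationary points) completes the proof of Thm.~\ref{Thm:ConvDT}.
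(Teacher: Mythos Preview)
Your structural plan---splitting into the dissipative and symplectic half-steps \eqref{eq:decomp}, handling the constraint contribution via the complementarity identity $u_{k+1}^{\TT}R_k=-\alpha\sum_{i\in I_{x_k}}\lambda_k^i g_i(x_k)$, and keeping that last sum explicit rather than trying to sign it---matches the paper exactly. The choice of $V_k$, however, differs substantially. The paper does \emph{not} use the continuous-time form $\tfrac12|a_k(x-\bar x)+u|^2+f(x)$ (there is no canonical $\bar x$ in the nonconvex setting, as you already note) nor the bare energy $\tfrac12|u|^2+f(x)$. Instead it takes
\[
V_k(x,u)=\tilde H_k(x,u)+\tfrac{T_kd_k}{2}\nabla f(x)^{\TT}u,\qquad d_k:=1-\bar d\,T_k,
\]
where $\tilde H_k=\tfrac12|u|^2+f(x)-\tfrac{T_k}{2}\nabla f(x)^{\TT}u+T_k^2F_k(x,u)$ is the \emph{modified Hamiltonian} from backward error analysis \cite[Prop.~9]{ourWork2}. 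The point of $\tilde H_k$ is that the symplectic Euler map conserves it to order $T_k^4$, not merely $T_k^2$; this is what forces the error into the sharp form $c_{\text{V}4}T_k^2|u_{k+1}|+c_{\text{V}5}T_k^3$ stated in the lemma.

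The second ingredient you are vague about is the origin of $-c_{\text{V}3}T_k|R_k|^2$. It is \emph{not} obtained by ``completing the square against the dissipative decrease'' (that term involves $|u_k|^2$, not $|R_k|^2$). It comes from the correction $\tfrac{T_kd_k}{2}\nabla f(x)^{\TT}u$: expanding the dissipative step one finds $-\tfrac14|R_k|^2+\tfrac{d_kT_k}{2}R_k^{\TT}\nabla f(x_k)$, and together with $-\tfrac{d_kT_k^2}{2}|\nabla f(x_k)|^2$ from the symplectic step this rewrites (using $d_k+\bar dT_k=1$) as $-\tfrac{\bar dT_k}{4}|R_k|^2-\tfrac{d_k}{4}|R_k-T_k\nabla f(x_k)|^2-\tfrac{d_kT_k^2}{4}|\nabla f(x_k)|^2$. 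Choosing $\bar d$ large then absorbs the cross terms from $F_k$ and from Young's inequality on $f_{\text d}^{\TT}R_k$. With your bare energy $\tfrac12|u|^2+f(x)$ you would instead get $-\tfrac12|R_k-T_k\nabla f(x_k)|^2$ in one piece; splitting off a separate $T_k|R_k|^2$ from it costs a $+cT_k^2|\nabla f(x_k)|^2$ term, which is $O(T_k^2)$ and does not fit the stated error $c_{\text{V}4}T_k^2|u_{k+1}|+c_{\text{V}5}T_k^3$. For the downstream Proposition (which only uses the weaker bound $+cT_k^2$) your simpler $V_k$ would still suffice, so it is a legitimate alternative for the deterministic convergence result---but it proves a weaker inequality than the lemma as written.
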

\begin{proof}
From Lemma~\ref{Lemma:constr} we infer that $x_k$ is bounded and therefore contained in a compact set, which we denote by $\bar{C}$. Without loss of generality we assume that $f$ is analytic on $\bar{C}$ (note that by the Stone-Weierstrass theorem we can find a polynomial that approximates $f$ and $\nabla f$ arbitrarily closely, see \cite{ourWork2} for details). We can now invoke \citet[Proposition~9 of ][]{ourWork2}, which constructs the function $F_k(x,u)$ (continuous in $x$ and $u$) such that
\begin{equation}
|\tilde{H}_{k}(x_{k+1},u_{k+1})-\tilde{H}_k(\bar{x}_k,\bar{u}_k)| \leq c_\text{E} T_k^4 \label{eq:prooftmp}
\end{equation}
for $k$ sufficiently large, where 
\begin{equation*}
\tilde{H}_k(x,u):=\frac{1}{2} |u|^2 + f(x) - \frac{T_k}{2} \nabla f(x)\T u + T_k^2 F_k(x,u),
\end{equation*}
$c_\text{E}$ is constant, and $\bar{x}_k, \bar{u}_k$ are defined in \eqref{eq:decomp}. The function $F_k(x,u)$ is guaranteed to be bounded for $x\in \bar{C}$, $|u|\leq c_\text{u}$ (uniformly in $k$). We note that \eqref{eq:prooftmp} implies that $\tilde{H}$ is almost conserved when applying the symplectic step in \eqref{eq:decomp} (in fact, the right hand side can be replaced with $(T_k)^q$ for an arbitrarily large $q>0$). The function $\tilde{H}$ is therefore often referred to as a modified Hamiltonian and arises from the fact that the second step in \eqref{eq:decomp} is a symplectic map.

\hchange{We claim that the function
\begin{equation*}
V_k(x,u):= \tilde{H}_k(x,u) + \frac{T_k d_k}{2} \nabla f(x)\T u
\end{equation*}
satisfies the desired properties, where $d_k:=1-T_k>0$ (for sufficiently large $k$). The choice $d_k=1-T_k$ is motivated by the fact that, as a result, $V_{k+1}(x,u)-V_k(x,u)$ is of order $T_k^2$ for $x_k \in \bar{C}$ and $|u_k|\leq c_\text{u}$. We start by considering step 1 in \eqref{eq:decomp}, where we note that
\begin{align*}
\frac{1}{2} |\bar{u}_k|^2 &- \frac{1}{2} |u_k|^2 = \frac{1}{2} (\bar{u}_k-u_k)\T (\bar{u}_k+u_k)\\
&=- T_k f_\text{d}\T u_k + u_k\T R_k + \frac{1}{2} T_k^2 |f_\text{d}|^2 - T_k f_\text{d}\T R_k + \frac{1}{2} |R_k|^2\\
&=- T_k f_\text{d}\T u_k +  u_{k+1}\T R_{k} + \frac{1}{2} T_k^2 |f_\text{d}|^2 - \frac{1}{2} |R_k|^2 + T_k R_k\T \nabla f(x_k),
\end{align*}
where we replaced $u_{k}$ by $u_{k+1}$ in the last step and omitted the arguments of $f_\text{d}(x_k,u_k)$. In addition we can bound $f_\text{d}\T u_k$ and $|f_\text{d}|^2$ as follows:
\begin{align*}
-f_\text{d}\T u_k \leq - \bar{\delta} |u_k|^2, \quad |f_\text{d}|^2 \leq (2\delta + \beta)^2 |u_k|^2,
\end{align*}
where $\bar{\delta}=2\delta$ in the convex case and $\bar{\delta}=2\delta-\beta$ in the nonconvex case. As a result, we conclude that for large enough $k$, 
\begin{equation*}
    \frac{1}{2} |\bar{u}_k|^2 - \frac{1}{2} |u_k|^2 \leq  -  \frac{T_k \bar{\delta}}{2} |u_k|^2+  u_{k+1}\T R_{k} - \frac{1}{2} |R_k|^2 + T_k R_k\T \nabla f(x_k).
\end{equation*}}

\hchange{We further have
\begin{align*}
-\frac{T_k}{2} (1-d_k) \nabla f(x_k)\T (\bar{u}_k-u_k) &= -\frac{T_k}{2} (1-d_k) \nabla f(x_k)\T (-T_k f_\text{d} +R_k),
\end{align*}
which means that over step 1 in \eqref{eq:decomp} the function $V_k$ changes by
\begin{align*}
V_{k}(\bar{x}_k,\bar{u}_k)-V_k(x_k,u_k) &\leq -  \frac{T_k \bar{\delta}}{2} |u_k|^2 + u_{k+1}\T R_{k} + \frac{1}{2} T_k R_k\T \nabla f(x_k) (1+d_k) \\
&- \frac{1}{2} |R_k|^2 + \frac{1}{2} T_k^2 (1-d_k) \nabla f(x_k)\T f_\text{d} \\
&+ T_k^2 \nabla_u F_k(x_k,u_k)\T (-T_k f_\text{d} + R_k) 
+ T_k^2 c_\text{F} |-T_k f_\text{d} + R_k |^2,
\end{align*}
where $c_\text{F}>0$ is a bound on the smoothness constant of $F_k$. We now proceed to the second step of \eqref{eq:decomp}, where we exploit the fact that $\tilde{H}_k$ is invariant up to terms of order $T_k^4$. We are therefore left with analyzing the term $T_k d_k \nabla f(x)\T u$, which gives
\begin{align*}
V_k(x_{k+1},u_{k+1})\!-\!V_k(\bar{x}_k,\bar{u}_k) &\!\leq\!\frac{T_k d_k}{2} (\nabla f(x_{k+1})\!-\!\nabla f(x_k))\T \!u_{k+1} \!-\! \frac{T_k^2 d_k}{2} |\nabla f(x_k)|^2\!+\! c_\text{E} T_k^4\\
&\leq \frac{T_k^2 d_k}{2} |u_{k+1}|^2 - \frac{T_k^2 d_k}{2} |\nabla f(x_k)|^2+ c_\text{E} T_k^4,
\end{align*}
where we used the fact that $\nabla f$ is 1-smooth in the second step. We therefore obtain 
\begin{multline*}
V_{k}(x_{k+1},u_{k+1})-V_k(x_k,u_k) \leq -  \frac{T_k \bar{\delta}}{2} |u_k|^2 + u_{k+1}\T R_k - \frac{d_k}{2} |R_k-T_k\nabla f(x_k)|^2 \\
+ \frac{T_k^2 }{2} \nabla f(x_k)\T R_k + \frac{T_{k}^2 d_k}{2} |u_{k+1}|^2 + \frac{T_k^3}{2} \nabla f(x_k)\T f_\text{d}\\ + T_k^2 \nabla_u F_k(x_k,u_k)\T (-T_k f_\text{d} + R_k) 
+ T_k^2 c_\text{F} |-T_k f_\text{d} + R_k |^2 + c_{\text{E}} T_k^4,
\end{multline*}
where we have exploited the fact that $d_k=1- T_k$.
We further note that $|\nabla_u F_k(x_k,u_k)|\leq c_{\text{F}2} (|u_k|+|\nabla f(x_k)|)$ and conclude from $x_k\in \bar{C}$ and $|u_k|\leq c_\text{u}$ that all variables (i.e., $\nabla f(x_k)$, $R_k$, $f_\text{d}$, $u_k$, $u_{k+1}$, and $\nabla_u F_k(x_k,u_k)$) are bounded.  Hence, there exists a constant $c_V$ such that
\begin{equation*}
V_{k}(x_{k+1},u_{k+1})-V_k(x_k,u_k) \leq -  \frac{T_k \bar{\delta}}{2} |u_k|^2 + u_{k+1}\T R_k - \frac{d_k}{2} |R_k-T_k\nabla f(x_k)|^2 + c_V T_k^2,
\end{equation*}
for all $k$ large enough. In addition, the change of $V_{k+1}(x_{k+1},u_{k+1})-V_k(x_{k+1},u_{k+1})$ is of the order $\mathcal{O}(T_k^2)$. We further infer from the update equation for $u_{k+1}$ that 
\begin{equation*}
    u_{k+1}\T R_k = - \alpha \sum_{i\in I_{x_k}} \lambda_k^i g_i(x_k),
\end{equation*} which yields the desired result.}
\end{proof}

%\begin{proposition}
%Let the assumptions of Thm.~\ref{Thm:ConvDT} be satisfied. Then $x_k$ converges to the set of stationary points of \eqref{eq:fundProb} and $u_k$ converges to zero.
%\end{proposition}

%\section{Proof of Thm.~\ref{Thm:ADT}}
% \qed

%%=============================================%%
%% For submissions to Nature Portfolio Journals %%
%% please use the heading ``Extended Data''.   %%
%%=============================================%%

%%=============================================================%%
%% Sample for another appendix section			       %%
%%=============================================================%%

%% \section{Example of another appendix section}\label{secA2}%
%% Appendices may be used for helpful, supporting or essential material that would otherwise 
%% clutter, break up or be distracting to the text. Appendices can consist of sections, figures, 
%% tables and equations etc.

\end{appendices}

%%===========================================================================================%%
%% If you are submitting to one of the Nature Portfolio journals, using the eJP submission   %%
%% system, please include the references within the manuscript file itself. You may do this  %%
%% by copying the reference list from your .bbl file, paste it into the main manuscript .tex %%
%% file, and delete the associated \verb+\bibliography+ commands.                            %%
%%===========================================================================================%%

\bibliography{sn-bibliography}% common bib file
%% if required, the content of .bbl file can be included here once bbl is generated
%%\input sn-article.bbl

\end{document}